\numberwithin{equation}{section}
\theoremstyle{plain}
\newtheorem{lemma}{Lemma}
\newtheorem{theorem}{Theorem}
\newtheorem{cor}{Corollary}
\theoremstyle{remark}
\newtheorem{definition}{Definition}
\newtheorem{example}{Example}
\newtheorem{assump}{Assumption}
\newtheorem{remark}{Remark}
\newtheorem*{case}{Case}
\newcommand{\ind}[1]{\mathbbm{1}\left[#1\right]}
\newcommand{\Expect}[1]{\operatorname{\mathbb{E}}\left[#1\right]}
\newcommand{{\LPC}}{\textbf{LPC}}
\newcommand{\Prob}[1]{\operatorname{\mathbb{P}}\left[#1\right]}
\newcommand{\Reals}{\mathbb{R}}
\newcommand{\x}{\mathbf{x}}
\newcommand{\y}{\mathbf{y}}
\newcommand{\RBM}{\operatorname{RBM}}
\newcommand{\N}{\mathcal{N}}
\renewcommand{\d}{\operatorname{d}}
\newcommand{\floor}[1]{\lfloor #1 \rfloor}
\newcommand{\blow}{\underline{b}}
\newcommand{\rinv}{R^{-1}}
\newcommand{\sigmalow}{\underline{\sigma}}
\newcommand{\sigmahigh}{\overline{\sigma}}
\newcommand{\rinvk}[1]{\left(R|_{#1}\right)^{-1}}
\newcommand{\muk}[1]{\mu|_{#1}}
\newcommand{\bk}[1]{b^{(#1)}}
\newcommand{\ak}[1]{a^{(#1)}}
\newcommand{\Tk}[1]{T^{(#1)}}
\newcommand{\Lambdak}[1]{\Lambda^{(#1)}}
\newcommand{\blowk}[1]{\blow^{(#1)}}
\newcommand{\bnorm}[1]{\|#1 \|_{1, \beta}}
\newcommand{\supnorm}[1]{\|#1\|_{\infty}}
\newcommand{\bsupnorm}[2]{\|#1 \|_{\infty, #2}}
\newcommand{\thetak}[1]{\theta^{(#1)}}
\begin{document}

\begin{frontmatter}
\title{Dimension-free local convergence and perturbations for reflected Brownian motions}
\runtitle{Dimension-free convergence rates for RBM}

\begin{aug}
\author[A]{\fnms{Sayan} \snm{Banerjee}\ead[label=e1]{sayan@email.unc.edu}},
\author[A]{\fnms{Brendan} \snm{Brown}\ead[label=e2,mark]{bb@live.unc.edu}}
\address[A]{Statistics and Operations Research, UNC-Chapel Hill, \printead{e1,e2}}

\end{aug}

\begin{abstract}
We describe and analyze a class of positive recurrent reflected Brownian motions (RBMs) in $\mathbb{R}^d_+$ for which local statistics converge to equilibrium at a rate independent of the dimension $d$. Under suitable assumptions on the reflection matrix, drift and diffusivity coefficients, dimension-independent stretched exponential convergence rates are obtained by estimating contractions in an underlying weighted distance between synchronously coupled RBMs. We also study the Symmetric Atlas model as a first step in obtaining dimension-independent convergence rates for RBMs not satisfying the above assumptions. By analyzing a pathwise derivative process and connecting it to a random walk in a random environment, we obtain polynomial convergence rates for the gap process of the Symmetric Atlas model started from appropriate perturbations of stationarity.
\end{abstract}

\begin{keyword}[class=MSC2020]
\kwd[Primary ]{60J60} 
\kwd{Diffusion processes}
\kwd[; secondary ]{60J55, 60K37, 37A25}
\end{keyword}

\begin{keyword}
\kwd{Reflected Brownian motion}
\kwd{Atlas model}
\kwd{Wasserstein distance}
\kwd{Coupling}
\kwd{Dimension-free convergence, weighted distance, derivative process}
\end{keyword}

\end{frontmatter}


\section{Introduction}\label{sec:intro}

We say a continuous stochastic process $X$ is a solution to $\RBM(\Sigma, \mu, R)$ if it satisfies
\begin{equation}\label{eqn:rbm}
    X(x, t) = x + \mu t + D B(t) + R L(x, t)
\end{equation} 
for each $t > 0$ and $x \in \Reals^d_+ := \{x \in \Reals^d \> |\> x_i \ge 0, \ i = 1 \ldots d \}$. Here $\mu \in \Reals^d, D, R \in \mathbb{R}^{d \times d}$, $B$ is a $d$-dimensional Brownian motion and $\Sigma = DD^T$ is positive definite. We assume that $R = I - P^T$ for a matrix $P$ that is sub-stochastic (i.e. non-negative entries and row sums are bounded above by one) and transient (i.e. $P^n \rightarrow 0$ as $n \rightarrow \infty$). $L$ is the local time constraining $X$ to the positive orthant $\Reals^d_+$: For $x \in \Reals^d_+$, it is the non-decreasing, continuous process adapted to the natural filtration of the Brownian motion $B$ such that $X(x, t) \in \Reals^d_+$ for all $t \ge 0$ and
\begin{equation}\label{loctim}
L(x, 0) =0, \quad \quad \int_0^tX_i(x,s)dL_i(x, s)=0 \text{ for all } t >0, 1 \le i \le d.
\end{equation}
RBMs of the form \eqref{eqn:rbm} arise in a variety of situations, including heavy-traffic limits of queue-length processes in generalized Jackson networks with $d$ servers \cite{reiman_jackson_net,harrisonwilliams}, and gaps between $d+1$ competing particles in rank-based diffusions (e.g. \cite{karatzas_skewatlas,sarantsev}).

There is a large literature studying diffusions with oblique reflections, in cases both more specific and more general than \eqref{eqn:rbm}, and we give only a brief background describing previous work most relevant to the current article. The paper \cite{harrisonreiman} first proved \eqref{eqn:rbm} has a unique strong solution. More precisely, under the stated assumptions on the reflection matrix $R$, for each $x \in \Reals^d_+$, there is a unique pair of continuous stochastic processes $(X,L)$ satisfying \eqref{eqn:rbm}-\eqref{loctim}. Moreover, the collection $\{X(\cdot; x)\}_{\x \in \mathbb{R}^d_+}$ defines a strong Markov process (see \cite{harrisonwilliams}). The naturality of this assumption on $R$ stems from the fact that the routing matrix $P$ of any single-class open queueing network is sub-stochastic and transient \cite{harrisonreiman} which, in turn, translates to its heavy traffic limit described by equations of the form \eqref{eqn:rbm}-\eqref{loctim}. The conditions on $P$ in particular say that its spectral radius is strictly less than $1$.  The matrix $\Sigma = DD^T$ gives the covariance matrix associated with the diffusion term of \eqref{eqn:rbm}.

It was shown in \cite[Section 6]{harrisonwilliams} that \eqref{eqn:rbm} has a stationary distribution if and only if $\rinv \mu < 0$, and in that case the stationary distribution is unique. Intuitively, this stability condition can be understood by noting that the associated `noiseless system' (\eqref{eqn:rbm}-\eqref{loctim} taking $B\equiv 0$), which governs the long time stability properties of $\RBM(\Sigma, \mu, R)$, has $0$ as its unique attracting fixed point if $\rinv \mu < 0$ \cite{atarbudhiraja}. For the open queueing network whose heavy traffic limit gives $\RBM(\Sigma, \mu, R)$, this stability condition is equivalent to the traffic intensity at each server being less than its service rate, which is an `if and only if' condition for stability of the queueing network.

In this article, we are interested in the effect of dimension on convergence rates to stationarity for reflected Brownian motions (RBMs) from a variety of initial configurations. This is a natural consideration for steady state sampling and evaluating steady state performance for high dimensional RBMs. Towards this end, we will implicitly consider a family of processes $X^{(d)} \sim \RBM(\Sigma^{(d)}, \mu^{(d)}, R^{(d)})$ indexed by the dimension $d\ge 1$. For notational convenience, we will suppress the superscript $(d)$ in further discussion.

\subsection{Convergence rates for RBM: work till date}

To study convergence rates of $X$ to its stationary distribution, one can apply general methods like Harris' theorem via using appropriate Lyapunov functions and minorization conditions \cite{meyn2012markov}. For example, \cite{budhiraja_lee} uses this methodology to give exponentially fast convergence of $X(x,\cdot)$ to the stationary random variable in a weighted total variation norm starting from any $x \in \Reals_d^+$. However, the rate of convergence is not explicit, as is typical for such methods, and in particular has unknown dimension dependence. See also \cite{sarantsev_RBM_tail} for a similar treatment.

In \cite{blanchet-chen}, the authors obtained explicit dimension dependent convergence rates to stationarity in $L^1$-Wasserstein distance when the RBM satisfies `uniformity conditions in dimension' on the model parameters $\Sigma, \mu,R$ (discussed here in more detail in Example \ref{ex:bc}). Their key insight was to consider \emph{synchronous couplings} of the RBM $X$ (i.e. driven by the same Brownian motion) started from distinct points $x,y \in \mathbb{R}^d_+$, with $x \le y$ (co-ordinate wise ordering). They used the fact that synchronous couplings preserve ordering in time, that is, $X(x,t) \le X(y,t)$ for all $t \ge 0$. Moreover, there are contractions in $L^1$ distance between the synchronously coupled processes (under their uniformity assumptions) when the dominating process $X(y,\cdot)$ has hit all faces of the orthant $\Reals^d_+$. Building on this idea, \cite{banerjeebudhiraja} used a weighted Lyapunov function and excursion theoretic control of the synchronously coupled processes to give convergence rates in $L^1$-Wasserstein distance for the general process \eqref{eqn:rbm} which depend explicitly on $\mu, R, \Sigma,d$. In particular, this approach greatly improved the rates for the models considered in \cite{blanchet-chen} from polynomial in $d$ to logarithmic in $d$.

\subsection{Dimension-free local convergence for RBM} 

Typically, growing dimension slows down the rate of convergence for the whole system, as reflected in the bounds obtained in \cite{blanchet-chen,banerjeebudhiraja}, but one might observe a much faster convergence rate to equilibrium of \emph{local statistics} of the system. 
In Section \ref{sec:dfsec}, we describe and investigate a class of RBMs for which convergence rates of local statistics do not depend on the underlying dimension of the entire system. We call this phenomenon dimension-free local convergence. 

Mathematically, this is challenging as the local evolution is no longer Markovian and the techniques in \cite{blanchet-chen,banerjeebudhiraja} cannot be readily applied. We make a crucial observation that certain weighted $L^1$ distances (see $\|\cdot\|_{1,\beta}$ defined in Section \ref{wnormdef}) between synchronously coupled RBMs show dimension-free contraction rates. The evolution of such weighted distances are tracked in time for synchronously coupled RBMs $X(0,\cdot)$ and $X(x,\cdot)$ for $x \in \mathbb{R}^d_+$. It is shown that for this distance to decrease by a dimension-free factor of its original value, only a subset of co-ordinates of $X(x,\cdot)$, whose cardinality depends on the value of the original distance, need to hit zero. This is in contrast with the unweighted $L^1$ distance considered in \cite{blanchet-chen,banerjeebudhiraja} where all the coordinates need to hit zero to achieve such a contraction, thereby slowing down the convergence rate. Consequently, by tracking the hitting times to zero of a time dependent number of co-ordinates, one achieves dimension-free convergence rates in this weighted $L^1$ distance as stated in Theorem \ref{thm:main_fromx}. This, in turn, gives dimension-free local convergence as is made precise in \eqref{locstat}. In Section \ref{sec:examples}, Theorem \ref{thm:main_fromx} is applied to two important classes of RBM to obtain explicit convergence rates. 

\subsection{Perturbations from stationarity for the Symmetric Atlas Model}
As a first step in studying dimension-free convergence rates for RBMs which do not satisfy the assumptions of Section \ref{sec:dfsec}, we focus attention in Section \ref{sec:atlas_perturbation} on the Symmetric Atlas model. This is a rank-based diffusion comprising $d+1$ Brownian particles where the least ranked particle performs a Brownian motion with constant positive drift and the remaining particles perform standard Brownian motions. The gaps between the ordered particles collectively evolve as a RBM which converges in total variation distance to an explicit stationary measure \eqref{eqn:atlas_stationary} \cite{pitman_pal}. Interestingly, the gap process of the infinite-dimensional version of the Symmetric Atlas model obtained in \cite{pitman_pal} has infinitely many stationary measures \cite{sarantsev2017stationary}, only one of which is a weak limit of the stationary measure \eqref{eqn:atlas_stationary} of the $d$-dimensional system (thought of as a measure in $\mathbb{R}^{\infty}_+$) as $d \rightarrow \infty$. This leads to the heuristic that, for large $d$, the $d$-dimensional gap process with initial distribution `close' to the projection (onto the first $d$ co-ordinates) of one of the other infinite-dimensional stationary measures spends a long time near this projection before converging to \eqref{eqn:atlas_stationary}. From this heuristic, one expects that dimension-free convergence rates for associated statistics can only be obtained if the initial gap distribution is `close' to the stationary measure \eqref{eqn:atlas_stationary} in a certain sense. Evidence for this heuristic is provided in the few available results on `uniform in dimension' convergence rates of some rank-based diffusions \cite{jourdain2008propagation,jourdain2013propagation}. In both these papers, under strong convexity assumptions on the drifts of the particles, dimension-free exponential ergodicity was proven for the joint density of the particle system when the initial distribution is close to the stationary distribution as quantified by the Dirichlet energy functional (see \cite[Theorem 2.12]{jourdain2008propagation} and \cite[Corollary 3.8]{jourdain2013propagation}). The Symmetric Atlas model lacks such convexity in drift and hence, the dimension-free Poincar\'e inequality for the stationary density, that is crucial to the methods of \cite{jourdain2008propagation,jourdain2013propagation}, does not apply. We take a very different approach which involves analyzing the long term behavior of pathwise derivatives of the RBM in initial conditions. Using this analysis, we obtain polynomial convergence rates to stationarity in $L^1$-Wasserstein distance when the initial distribution of the gaps between particles is in an appropriate perturbation class (defined in Definition \ref{def:perturbation_class}) of the stationary measure. Although we do not yet have lower bounds on convergence rates, we strongly believe that the optimal rates are indeed polynomially decaying in time (see Remark \ref{powerrem}).

We mention here that \cite{blanchent2020efficient} has recently used the derivative process to study convergence rates for RBMs satisfying strong uniformity conditions in dimension (which do not hold for the Symmetric Atlas model). Our analysis of the derivative is based on a novel connection with a random walk in a random environment generated by the times and locations where the RBM hits faces of $\mathbb{R}^d_+$ (see Section \ref{pathrwre}). We believe our analysis can be combined with that of \cite{blanchent2020efficient} to study ergodicity properties of more general classes of RBM. This is deferred to future work.

We also mention the work of \cite{pal2019note} who obtained a dimension-free Talagrand type  transportation cost-information inequality for reflected Brownian motions. Such inequalities, however, are more useful in dimension-free concentration of measure phenomena as opposed to dimension-free rates of convergence to stationarity.

\subsection{Future work: Ergodicity of infinite-dimensional RBMs}
Although we only consider finite large $d$, our work sets the stage for obtaining convergence rates to stationarity for infinite-dimensional RBM, which we will address in future work. Such processes have appeared in numerous situations \cite{pitman_pal,ichiba_karatzas,sarantsev2017two,cabezas2019brownian,dembo2017equilibrium} but their ergodicity properties are far from being well understood. Notable advances in this direction are made in \cite{sarantsev_infinite,dembo2019infinite,banerjee2021domains}. \cite{sarantsev_infinite} partially characterizes weak limits of the gap process of the infinite-dimensional Symmetric Atlas model \cite[Section 4.3]{sarantsev_infinite}. \cite{dembo2019infinite} obtains general conditions on the initial configuration of the above gap process for it to converge weakly to $\bigotimes_{i = 1}^{\infty} \text{Exp}\left(2\right)$ \cite[Theorem 1.1]{dembo2019infinite}. In \cite{banerjee2021domains}, general conditions were given on the initial gap distribution of the infinite-dimensional Symmetric Atlas model for the time average of the gaps to converge to one of the (uncountably many) stationary measures given by $\bigotimes_{i = 1}^{\infty} \text{Exp}\left(2 + ia\right), \, a \ge 0$.

 The dimension-free convergence rates obtained here can be combined with taking a limit in dimension in an appropriate sense to obtain convergence rates in Wasserstein distance for infinite-dimensional RBMs starting from appropriate initial configurations. This is part of the first author's continuing program of studying ergodicity properties of infinite-dimensional systems \cite{banerjee2020rates,banerjee2020ergodicity,banerjee2021domains}. 

\subsection{Generic notation}\label{sec:notation}
Here we list notation for general concepts and conventions. Inequalities for vectors are evaluated element-wise.  For a square matrix $A$,  $A|_k$ is the $k\times k$ northwest quadrant. For a vector $v$, $v|_k$ is the projection of $v$ onto the first $k$ coordinates. Other conventions include $x\vee y = \max(x, y)$, $x \wedge y = \min(x, y)$, $\floor{x} = \max\{k \in \mathbb{Z} \> |\> k\le x\}$ and $x^+ = \max(0, x)$.

For $x \in \Reals^k$, we write the supremum norm as $\supnorm{x} = \max_{1 \le i \le k}|x_i|$ and the $\ell^1$ norm as $\|x\|_1 :=  \sum_{i=1}^k |x_i|$. For a fixed $\beta \in (0, 1)$, define a weighted $\ell^1$ norm by $\bnorm{x} = \sum_{i=1}^k \beta^i |x_i|$ and weighted supremum norm by $\bsupnorm{x}{\beta} = \max_{1 \le i \le k}\beta^i |x_i|$.

For $X$ a $\RBM(\Sigma, \mu, R)$ started at $x \in \Reals^d_+$ and any $k \in \{1  \ldots d\}$, we write $X(\infty)$ for the random variable with the stationary distribution. Write $X|_k(\cdot, x)$ for the process restricted to its first $k$ coordinates.

\section{Dimension-free local convergence rates for RBM}\label{sec:dfsec}

\subsection{A weighted norm governing dimension-free convergence}\label{wnormdef}
Our investigation of dimension-free convergence relies on the analysis of the weighted distance $\|X(x,\cdot) - X(X(\infty),\cdot)\|_{1,\beta}$ in time, for appropriate choices of $\beta \in (0,1)$.
Towards this end, we will analyze the following functionals:
\begin{align}
    u_{\beta}(x, t) &= \bnorm{\rinv\left(X(x, t) - X(0, t)\right)} := \sum_{i=1}^d \beta^i \left| \left[\rinv\left(X(x, t) - X(0, t)\right)\right]_i \right|,\label{eqn:weighted_norm}\\ 
    u_{\pi,\beta}(t) &= u_{\beta}(X(\infty), t), \quad \quad t \ge 0. \label{eqn:u_pi}
\end{align}
In the following, when $\beta$ is clear from context, we will suppress dependence on $\beta$ and write $u$ for $u_{\beta}$ and $u_{\pi}$ for $u_{\pi,\beta}$. The above functionals are convenient because the vector $\rinv\left(X(x, t) - X(0, t)\right)$ is co-ordinate wise non-negative and non-increasing in time (see Theorem \ref{thm:rbm_monotonicity} (iii)). This fact and the triangle inequality can be used to show for any $x \in \mathbb{R}^d_+, t \ge 0$ (see \eqref{eqn:main_fromx1}),
\begin{align*}
\bnorm{\left(X(x, t) - X(X(\infty), t)\right)} \le u(x, t) + u_\pi(t).
\end{align*}

We are interested in conditions under which there exists a $d$-independent $\beta \in (0,1)$ and a function $f: \Reals_+ \mapsto \Reals_+$ not depending on the dimension $d$ of $X$ such that $f(t) \to 0$ as $t \to \infty$ and, for any $x$ in an appropriate subset $\mathcal{S}$ of $\mathbb{R}_+^d$,
\begin{equation}\label{eqn:dimension_free}
    \Expect{\bnorm{\left(X(x, t) - X(X(\infty), t)\right)}} \le C f(t), \quad \quad t \ge t_0,
\end{equation}
where $C, t_0 \in (0, \infty)$ are constants not depending on $d$ (but can depend on $x$). This, in particular, gives dimension-free local convergence in the following sense: For any $k \in \{1,\dots,d\}$, consider any function $\phi: \mathbb{R}^k_+ \mapsto [0,\infty)$ which is $L^1$-Lipschitz, i.e., there exists $L_{\phi} >0$ such that
$$
|\phi(x) - \phi(y)| \le L_{\phi} \|x-y\|_1, \quad \quad x,y \in \mathbb{R}^k_+.
$$
Recall that the $L^1$-Wasserstein distance between two probability measures $\mu$ and $\nu$ on $\mathbb{R}^k_+$ is given by
$$
W_1(\mu, \nu) = \inf\left\lbrace\int_{\mathbb{R}^k_+ \times \mathbb{R}^k_+}\|\x-\y\|_1 \gamma(\operatorname{d}\x, \operatorname{d}\y) \ : \gamma \text{ is a coupling of } \mu \text{ and } \nu\right\rbrace.
$$
Denote the law of a random variable $Z$ by $\mathcal{L}[Z]$. Then, \eqref{eqn:dimension_free} implies for $x \in \mathcal{S}$,
\begin{align}\label{locstat}
W_1\left(\mathcal{L}[\phi(X|_k(x, t))], \mathcal{L}[\phi(X|_k(\infty))]\right) 
&\le \Expect{|\phi(X|_k(x, t)) - \phi(X|_k(X(\infty), t))|} \nonumber \\
&\le C\beta^{-k} L_{\phi} f(t), \quad \quad t \ge t_0.
\end{align}

\subsection{Parameters and Assumptions}
We now define the parameters that govern dimension-free local convergence which, in turn, are defined in terms of the original model parameters $(\Sigma,\mu,R)$ of the associated RBM. 
We abbreviate $\sigma_i = \sqrt{\Sigma_{ii}}, i=1,\ldots, d$. Define for $1 \le k \le d$,
\begin{align}\label{pardefcon}
\bk{k} &:= -\rinvk{k}\muk{k}, \quad \quad b = \bk{d}, \notag\\
\blowk{k} &:= \min_{1 \le i \le k} \bk{k}_i,  \quad \quad \ak{k} := \max_{1\le i \le k} \frac{1}{\bk{k}_i} \sum_{j=1}^k (\rinvk{k})_{ij}\sigma_j.
\end{align}
To get a sense of why these parameters are crucial, recall that our underlying strategy is to obtain contraction rates of $u(x,\cdot)$ defined in \eqref{eqn:weighted_norm} by estimating the number of times a subset of the co-ordinates of $X(x,\cdot)$, say $\{X_1(x,\cdot),\dots,X_k(x,\cdot)\}, k \le d$, hit zero. However, this subset does not evolve in a Markovian way. Thus, we use monotonicity properties of RBMs to couple this subset with a $\mathbb{R}^k_+$-valued reflected Brownian motion $\bar{X}(x|_k,\cdot)$, started from $x|_k$ and defined in terms of $\mu|_k, D|_k, R|_k$ and (a possible restriction of) the same Brownian motion driving $X(x,\cdot)$, such that $X_i(x,t) \le \bar{X}_i(x|_k,t)$ for all $1 \le i \le k$ (see Theorem \ref{thm:domination}). The analysis in \cite{banerjeebudhiraja} shows that the parameters defined in \eqref{pardefcon} with $k=d$ can be used to precisely estimate the minimum number of times all co-ordinates of $X(x,\cdot)$ hit zero by time $t$ as $t$ grows. Thus, for any $1 \le k \le d$, the parameters \eqref{pardefcon} can be used to quantify analogous hitting times for the process $\bar{X}(x|_k,\cdot)$ which, by the above coupling, gives control over corresponding hitting times of $\{X_1(x,\cdot),\dots,X_k(x,\cdot)\}$.

We list below two sets of assumptions on the model parameters $(\Sigma,\mu,R)$ which guarantee dimension-free local convergence.
\begin{assump}\label{assump:main}
There exist $d$-independent constants $\sigmalow, \sigmahigh, b_0 > 0$, $r^* \ge 0$, $M, C \ge 1$, $k_0 \in \{2, \ldots, d\}$ and $\alpha\in (0, 1)$ such that for all $d \ge k_0$, 
\begin{itemize}
    \item[I.] $(\rinv)_{ij} \le C \alpha^{j-i}$ for $1\le i \le j \le d$,
    \item[II.] $(\rinv)_{ij} \le M$ for $1 \le i, j \le d$,
    \item[III.] $\blowk{k} \ge b_0 k^{-r^*}$ for $k = k_0, \ldots, d$,
    \item[IV.] $\sigma_i \in [\sigmalow, \sigmahigh]$ for $1 \le i \le d$.
\end{itemize}
\end{assump}
We explain why Assumption \ref{assump:main} is `natural' in obtaining dimension-free local convergence. Since $P$ is a transient and substochastic, it can be associated to a killed Markov chain on $\{0\} \cup \{1,\dots,d\}$ with transition matrix $P$ on $\{1,\dots,d\}$ and killed at $0$ (i.e. probability of going from state $k \in \{1,\dots,d\}$ to $0$ is $1 - \sum_{l=1}^d P_{kl}$ and $P_{00} =1$). Moreover, since $P$ is transient and $R = I - P^T$, we have $\rinv = \sum_{n=0}^\infty (P^T)^n$. This representation shows that $(\rinv)_{ij}$ is the expected number of visits to site $i$ starting from $j$ of this killed Markov chain. For fixed $x \in \mathbb{R}^d_+$ and $k < < d$, consider a local statistic of the form $\phi(X|_k(x, t))$ as in \eqref{locstat}. For this statistic to stabilize faster than the whole system, we expect the influence of the far away co-ordinates $X|_j(x,\cdot), j>>k,$ to diminish in an appropriate sense as $k$ increases. This influence is primarily manifested through the oblique reflection arising out of the $R$ matrix in \eqref{eqn:rbm}. I of Assumption \ref{assump:main} quantifies this intuition by requiring that the expected number of visits to state $i$ starting from state $j>i$ of the associated killed Markov chain decreases geometrically with $j-i$. This is the case, for example, when this Markov chain started from $j>i$ has a uniform `drift' away from $i$ towards the cemetery state. See Example \ref{ex:skew_atlas}. In more general cases, one can employ Lyapunov function type arguments \cite{meyn2012markov} to the underlying Markov chain to check I. 

II above implies that the killed Markov chain starting from state $j$ spends at most $M$ expected time at any other site $i \in \{1,\dots,d\}$ before it is absorbed in the cemetery state $0$. This expected time, as our calculations show, is intimately tied to decay rates of $\bnorm{\left(X(x, \cdot) - X(X(\infty), \cdot)\right)}$. 

As noted in \cite{harrisonwilliams,blanchet-chen,banerjeebudhiraja}, the `renormalized drift' vector $b$ characterizes positive recurrence of the whole system. Through III above, we allow for a power law type co-ordinate wise lower bound of the renormalized drift vector $b^{(k)}$ of the projected system $X|_k(x, \cdot)$ as $k$ grows. In particular, if $\blowk{k}$ is uniformly lower bounded by $b_0$, we can take $r^*=0$.

IV above is a quantitative `uniform ellipticity' condition on the co-ordinates of the driving noise $DB(\cdot)$.

Note that we do not need to make any assumptions on the correlations of the driving noise, i.e. on $\sigma_{ij}/(\sigma_i,\sigma_j)$ for $i<j$. This can be understood upon inspection of our proof technique where the drift and the reflection `overpower' the diffusivity in long time contraction properties of $\bnorm{\left(X(x, \cdot) - X(X(\infty), \cdot)\right)}$.
The following assumption is a strengthening of Assumption \ref{assump:main} which, when satisfied, will lead to significantly better convergence rates to stationarity.
\begin{assump}\label{assump:bounded_row}
Suppose Assumption \ref{assump:main} holds. In addition assume $M$, which does not depend on $d$, may be chosen large enough that
\begin{itemize}
    \item[II'.] $\max_{1 \le i \le d} \sum_{j = 1}^d (\rinv)_{ij} \le M$. 
\end{itemize}
\end{assump}
This is satisfied, for example, when there exist positive $d$-independent constants $j_0,p_0$ such that the underlying killed Markov chain has jump size bounded by $j_0$ at each step, and a probability of at least $p_0$ of reaching $0$ in one step from any starting site in $\{1,\dots,d\}$. See Example \ref{ex:bc} for such a RBM.

\subsection{Main results}\label{sec:results}
Our first main result gives explicit bounds on the decay of expectation of the weighted distance $\|X(x,\cdot) - X(X(\infty),\cdot)\|_{1,\sqrt{\alpha}}$ ($\alpha$ defined in I of Assumption \ref{assump:main}) with time for RBMs satisfying Assumption \ref{assump:main} or \ref{assump:bounded_row}.
We first define some constants that will appear in Theorem \ref{thm:main_fromx}. They are needed to bound moments of weighted norms of the stationary random variable $X(\infty)$ and are derived in Lemma \ref{lemma:stat_moments}.

Suppose Assumption \ref{assump:main} holds, with $k_0 \in \{2 \ldots d\}$ and $\alpha \in (0, 1)$ defined therein. Set
\begin{equation}\label{l1def}
    L_1 := k_0^{r^* + 1} +  \sum_{i = k_0}^d i^{3 + r^*} \alpha^{i/8}.
\end{equation}
If in addition Assumption \ref{assump:bounded_row} holds, define 
\begin{equation}\label{l2def}
    L_2 := k_0^{r^*} +  \sum_{i = k_0}^d i^{2 + r^*} \alpha^{i/8}.
\end{equation}
Also, for $B \in (0,\infty)$, define the set
\begin{equation}\label{startset}
\mathcal{S}(b,B) := \left\lbrace x \in \mathbb{R}_+^d : \sup_{1 \le i \le d} \blow^{(i)} \supnorm{x|_i} \le B\right\rbrace.
\end{equation}
Theorem \ref{thm:main_fromx} directly implies dimension-free bounds on $t \mapsto \|X(x,t) - X(X(\infty),t)\|_{1,\sqrt{\alpha}}$ in the sense of \eqref{eqn:dimension_free} (see Remark \ref{rem1}) which, in turn, produce dimension-free local convergence rates as given by \eqref{locstat}.
\begin{theorem}\label{thm:main_fromx}
Suppose Assumption \ref{assump:main} holds for $X$, an $\RBM(\Sigma, \mu, R)$, with $\alpha \in (0, 1)$ defined therein. Recall the weighted distance $\|\cdot\|_{1,\sqrt{\alpha}}$ (taking $\beta = \sqrt{\alpha}$) defined in Section \ref{sec:notation}. 

Fix any $B \in (0,\infty)$. Then there exist constants $C_0, C_0', C_1 > 0$ not depending on $d, r^*$ or $B$ such that with $t_0' = t_0'(r^*) = C_0'\left(1+r^*\right)^{8+4r^*}$ and $L_1, L_2, \mathcal{S}(b,B)$ as defined in \eqref{l1def}-\eqref{startset}, we have for any $x \in \mathcal{S}(b,B)$ and any $d > t_0'^{1/(4+2r^*)}$,
\begin{multline}\label{eqn:main_fromx_result1}
    \Expect{\|X(x, t) - X(X(\infty), t)\|_{1,\sqrt{\alpha}}}\\
    \le
\begin{cases}
C_1 \left(L_1\sqrt{1+t^{1/(4+2r^*)} } + \supnorm{x}\exp\left\lbrace B/\sigmalow^2\right\rbrace\right) \> \exp\{-C_0 t^{1/(4+2r^*)} \},
  &   t_0' \le t < d^{4 + 2r^*}\\\\
C_1 \left(L_1\sqrt{1+t^{1/(4+2r^*)} } + \supnorm{x}\exp\left\lbrace B/\sigmalow^2\right\rbrace\right)\> \exp\left\lbrace -C_0\frac{t}{d^{3 + 2r^*}}\right\rbrace,   & t \ge d^{4 + 2r^*}.
\end{cases}
\end{multline}
If instead Assumption \ref{assump:bounded_row} holds, with $t_1' = t_1'(r^*) = C_0'\left(1+r^*\right)^{2+4r^*}$, we have for $d > t_1'^{1/(1+2r^*)}$,
\begin{multline}\label{eqn:main_fromx_result2}
  \Expect{\|X(x, t) - X(X(\infty), t)\|_{1,\sqrt{\alpha}}}\\
  \le
  \begin{cases}
  C_1 \left(L_2\sqrt{1+t^{1/(1+2r^*)} } + \supnorm{x}\exp\left\lbrace B/\sigmalow^2\right\rbrace\right) \> \exp\left\lbrace -C_0\frac{t^{1/(1+2r^*)}}{\log t}\right\rbrace,
 &   t_1' \le t < d^{1 + 2r^*}\\\\
C_1 \left(L_2\sqrt{1+t^{1/(1+2r^*)} } + \supnorm{x}\exp\left\lbrace B/\sigmalow^2\right\rbrace\right)\> \exp\left\lbrace - C_0 \frac{t}{d^{2r^*}\log d}\right\rbrace,
&   t \ge d^{1 + 2r^*}.
 \end{cases}
\end{multline}
\end{theorem}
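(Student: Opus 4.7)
The plan is to split the weighted distance using the triangle inequality observed before the theorem,
\begin{equation*}
\bnorm{X(x,t) - X(X(\infty),t)} \le u(x,t) + u_\pi(t),
\end{equation*}
with $\beta = \sqrt{\alpha}$, and bound $\Expect{u(x,t)}$ and $\Expect{u_\pi(t)}$ separately. Because the coupling is synchronous, Theorem~\ref{thm:rbm_monotonicity}(iii) tells us that $\rinv(X(x,t) - X(0,t))$ is componentwise non-negative and non-increasing, so $u(x,\cdot)$ is monotone non-increasing and only decreases through reflection events of the underlying process. Hence the entire argument reduces to quantifying how often \emph{enough} coordinates of $X(x,\cdot)$ hit $0$ so that $u$ contracts by a dimension-free factor.

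For a cutoff $k \le d$ to be chosen, I split $u(x,t)$ into a head over indices $i \le k$ and a tail over $i > k$. Assumption~\ref{assump:main}(I) together with monotonicity bounds the tail by $C\alpha^{k/2}\supnorm{\rinv x}$, which is under control for $x \in \mathcal S(b,B)$ and does not grow in $t$. For the head, I couple $X|_k(x,\cdot)$ with a dominating $k$-dimensional process $\bar X(x|_k,\cdot) \sim \RBM(\Sigma|_k,\muk{k},R|_k)$ via Theorem~\ref{thm:domination}: whenever a coordinate of $\bar X(x|_k,\cdot)$ hits $0$ so does the corresponding coordinate of $X$, whence the contribution of that index to $u$ is reset. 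Consequently the head contracts by a fixed factor bounded away from $1$ whenever all $k$ coordinates of $\bar X(x|_k,\cdot)$ have visited the boundary. The excursion analysis of \cite{banerjeebudhiraja}, applied to the projected parameters $\bk{k},\ak{k},\blowk{k}$ of \eqref{pardefcon} rather than the ambient ones, furnishes a tail estimate on the time $\tau_k$ to complete such a full visit of the form $\Prob{\tau_k > s} \le C\exp(-cs/g(k))$, where $g(k) \asymp k^{3+2r^*}$ under Assumption~\ref{assump:main} and $g(k) \asymp k^{2r^*}\log k$ under Assumption~\ref{assump:bounded_row} (the row-sum control II' is what kills a polynomial factor of $k$).

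Iterating these excursion bounds over disjoint windows on $[0,t]$ produces a head-contraction of order $\exp(-ct/g(k))$. Optimizing the trade-off $\alpha^{k/2}$ versus $\exp(-ct/g(k))$ then leads to $k(t) \asymp t^{1/(4+2r^*)}$ under Assumption~\ref{assump:main} and $k(t) \asymp t^{1/(1+2r^*)}/\log t$ under Assumption~\ref{assump:bounded_row}, producing exactly the stretched exponents displayed in \eqref{eqn:main_fromx_result1}--\eqref{eqn:main_fromx_result2}. The two-branch structure reflects the constraint $k(t)\le d$: once $t$ exceeds $d^{4+2r^*}$ (respectively $d^{1+2r^*}$), the optimizer saturates at $k=d$ and the bound collapses to a pure exponential in $t/g(d)$. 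The prefactor $\supnorm{x}\exp\{B/\sigmalow^2\}$ carries the initial-condition dependence through the head at time $0$, while the $\sqrt{1+t^{1/(\cdot)}}$ factor absorbs martingale fluctuations of $u$ during the final incomplete excursion window. For $u_\pi(t)$ I substitute $x=X(\infty)$, condition on $X(\infty)$, apply the bound just established, and integrate out using the weighted moment estimates on $X(\infty)$ from Lemma~\ref{lemma:stat_moments}; the constants $L_1$ and $L_2$ in \eqref{l1def}--\eqref{l2def} are precisely what this integration produces.

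The hardest part will be making the excursion-time bound for $\bar X(x|_k,\cdot)$ simultaneously \emph{polynomial in $k$ with the correct exponent} and \emph{uniform in $k\le d$}. This forces me to translate the full-system hypotheses into quantitative uniform estimates on the projected parameters $\bk{k},\ak{k},\blowk{k}$: Assumption~\ref{assump:main}(III) is the critical input preventing $\blowk{k}$ from decaying faster than $k^{-r^*}$, and II (respectively II') controls $\ak{k}$. Extracting the precise $k$-power in $g(k)$, in particular the $\log k$ sharpening under Assumption~\ref{assump:bounded_row}, requires a careful quantitative redo of the excursion argument of \cite{banerjeebudhiraja} in which the dimension enters only through the projected parameters of the dominating $k$-dimensional RBM, and not at all through the ambient $d$.
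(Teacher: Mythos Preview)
Your overall architecture matches the paper's: the triangle split $\bnorm{X(x,t)-X(X(\infty),t)}\le u(x,t)+u_\pi(t)$, the domination $X|_k \le \bar X(x|_k,\cdot)$, the contraction of $u$ each time the first $k$ coordinates have all hit $0$, the excursion-time bound borrowed from \cite{banerjeebudhiraja} applied to the projected parameters, the adaptive choice $k=k(t)$ saturating at $d$, and the integration of $u_\pi$ against the stationary moments producing $L_1,L_2$. Two points, however, are off and would trip you up if you tried to execute the argument as written.

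First, your explanation of the $\sqrt{1+t^{1/(\cdot)}}$ prefactor is wrong. There are no ``martingale fluctuations of $u$'': by Theorem~\ref{thm:rbm_monotonicity}(iii), $t\mapsto u(x,t)$ is pathwise non-increasing, so $u$ has no fluctuations at all. The square-root factor enters only through $u_\pi$, and it comes from the Cauchy--Schwarz step $\Expect{u(X(\infty),0)\exp(\supnorm{X|_k(\infty)}/(A\sigmalow\ak{k}))}\le \sqrt{\Expect{u^2(X(\infty),0)}}\sqrt{\Expect{\exp(2\cdot)}}$, where the exponential-moment bound from Lemma~\ref{lemma:stat_moments} yields $\sqrt{1+k}$.

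Second, and relatedly, under Assumption~\ref{assump:main} the optimal cutoff is \emph{not} the same for $u(x,t)$ and $u_\pi(t)$. For deterministic $x\in\mathcal S(b,B)$ you may take $A$ of order one in the excursion bound, which gives $g(k)\asymp k^{2(1+r^*)}\log k$ and optimizer $k(t)\asymp t^{1/(3+2r^*)}$. For $u_\pi$ you must integrate the factor $\exp(\supnorm{X|_k(\infty)}/(A\sigmalow\ak{k}))$ against the stationary law, and Lemma~\ref{lemma:stat_moments} forces $A\gtrsim k$ for this moment to be finite; the extra $k$ in $A$ degrades the effective $g(k)$ to $k^{3+2r^*}$, whence $k(t)\asymp t^{1/(4+2r^*)}$. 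This is why the final rate in \eqref{eqn:main_fromx_result1} carries the exponent $1/(4+2r^*)$ rather than the naively expected $1/(3+2r^*)$: the $u_\pi$ term is the bottleneck. Under Assumption~\ref{assump:bounded_row} this discrepancy disappears because the exponential moment is finite already for $A$ of order one, which is precisely what II' buys.
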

\begin{remark}\label{rem1}
We note the following.
\begin{itemize}
\item[(i)] The bounds in Theorem \ref{thm:main_fromx} indeed imply dimension-free convergence in the $\|\cdot\|_{1,\sqrt{\alpha}}$ norm in the sense of \eqref{eqn:dimension_free}. To see this, note that under Assumption \ref{assump:main} the constant $r^*$ does not depend on dimension $d$. Thus, since $\frac{t}{d^{3 + 2r^*}} \ge t^{1/(4+2r^*)}$ for $t \ge d^{4 + 2r^*}$, \eqref{eqn:main_fromx_result1} implies the following dimension-free convergence rate bound,
\begin{align*}
&\Expect{\|X(x, t) - X(X(\infty), t)\|_{1,\sqrt{\alpha}}}\\
&\le C_1 \left(L_1\sqrt{1+t^{1/(4+2r^*)} } + \supnorm{x}\exp\left\lbrace B/\sigmalow^2\right\rbrace\right) \> \exp\{-C_0 t^{1/(4+2r^*)} \}\\
&\le C_1 \left(L_1 + \supnorm{x}\exp\left\lbrace B/\sigmalow^2\right\rbrace\right) \>\sqrt{1+t^{1/(4+2r^*)} } \exp\{-C_0 t^{1/(4+2r^*)} \}, \ \ t \ge t_0'.
\end{align*}
Similarly, the bound in the first part of \eqref{eqn:main_fromx_result2} continues to hold for all $t \ge t_1'$. The bounds in Theorem \ref{thm:main_fromx} are presented in the given form to emphasize that the weighted distances, in fact, decay exponentially with coefficients depending on $d$.
\item[(ii)] Bounds analogous to those in Theorem \ref{thm:main_fromx} hold using the norm $\bnorm{\cdot}$ for any $\beta \in (\alpha,1)$, with appropriately adjusted constants depending on $\beta$, and the choice $\beta = \sqrt{\alpha}$ is merely for simplicity of exposition. In fact, our proofs are in terms of two parameters $\beta \in (\alpha,1)$ and $\delta \in (\beta,1)$, which can be appropriately chosen for the specific RBM under consideration to optimize the obtained bounds.
\end{itemize}
\end{remark}

\subsection{Applications of Theorem \ref{thm:main_fromx}}\label{sec:examples}
Here, we present two examples of RBMs that arise in diverse applications, where we can apply Theorem \ref{thm:main_fromx} to obtain explicit dimension-free convergence rates.
\begin{example}[Asymmetric Atlas model]\label{ex:skew_atlas}
\textup{We consider Atlas-type models, which are interacting particle systems represented by the following SDE:
\begin{equation}\label{ascol}
Z_k(t) = Z_k(0) + \ind{k=1} t + B^*_k(t) + p L_{(k-1,k)}(t) - q L_{(k,k+1)}(t), \quad t \ge 0,
\end{equation}
for $1\le k \le d+1$, $p \in (0,1), q= 1-p$. Here, $L_{(0,1)}(\cdot) \equiv L_{(d+1,d+2)}(\cdot) \equiv 0$, and for $1 \le k \le d$, $L_{(k,k+1)}(\cdot)$ is a continuous, non-decreasing, adapted process that denotes the collision local time between the $k$-th and $(k+1)$-th co-ordinate processes of $Z$, namely $L_{(k,k+1)}(0)=0$ and $L_{(k,k+1)}(\cdot)$ can increase only when $Z_{k} = Z_{k+1}$.
$B^*_k(\cdot)$, $1\le k \le d+1$, are mutually independent standard one dimensional Brownian motions. Each of the $d+1$ ranked particles with trajectories given by $(Z_1(\cdot),\dots,Z_{d+1}(\cdot))$ evolves as an independent Brownian motion (with the particle $1$ having unit positive drift) when it is away from its neighboring particles, and interacts with its neighbors through possibly asymmetric collisions. The Symmetric Atlas model, namely the case $p=1/2$, was introduced in \cite{fernholz2002stochastic} as a mathematical model for stochastic portfolio theory. The Asymmetric Atlas model, namely the case $p \in (1/2,1)$, was introduced in \cite{karatzas_skewatlas}. It was shown that it arises as scaling limits of numerous well known interacting particle systems involving asymmetrically colliding random walks \cite[Section 3]{karatzas_skewatlas}. Since then, this model has been extensively analyzed: see \cite{karatzas_skewatlas,pal,ichiba_karatzas,sarantsev_infinite} and references therein.}

\textup{The gaps between the particles, defined by $X_i(\cdot) = Z_{i+1}(\cdot) - Z_i(\cdot), 1 \le i \le d$, evolve as an $\RBM(\Sigma, \mu, R)$ with $\Sigma$ given by $\Sigma_{ii} = 2$ for $i = 1 \ldots d$, $\Sigma_{ij}=-1$ if $|i-j|=1$, $\Sigma_{ij} = 0$ if $|i-j| >1$, $\mu$ given by $\mu_1 = -1, \mu_j = 0$ for $j = 2, \ldots, d$, and $R = I-P^T$, where
\begin{equation}\label{eqn:skew_atlas1}
    P_{ij} = \begin{cases}
    p \quad \quad j = i+1,\\
    1-p \quad \quad j = i-1, \\
    0 \quad \quad \text{otherwise}.
    \end{cases}
\end{equation}}
\textup{In this article, we are interested in the ergodicity of the gap process $X$. In the current example, we study the Asymmetric Atlas model. The Symmetric Atlas model is treated in Section \ref{sec:atlas_perturbation}.} 

\textup{Recall that the reflection matrix $R= I-P^T$ is associated with a killed Markov chain. For the Asymmetric Atlas model, this Markov chain has a more natural description as a random walk on $\{0,1,\dots,d+1\}$ which increases by one at each step with probability $p$ and decreases by one with probability $1-p$, and is killed when it hits either $0$ or $d+1$. Then for $1 \le i,j \le d$, $(\rinv)_{ij}$ is the expected number of visits to $i$ starting from $j$ by this random walk before it hits $0$ or $d+1$. Since $p > 1-p$, the random walk has a drift towards $d+1$, which suggests I, II of Assumption \ref{assump:main} hold. This is confirmed by direct computation, which gives for $q = 1-p$,
\begin{equation}\label{eqn:skew_atlas2}
    (\rinv)_{ij} = \begin{cases}
     \frac{\left(q/p \right)^{j-i}}{p-q}\frac{\left(1 - (q/p)^i \right)\left(1-(q/p)^{d+1-j} \right)}{1- (q/p)^{d+1}} \le \frac{\left(q/p \right)^{j-i}}{p-q} & 1 \le i \le j \le d,\\
     \frac{\left(p/q \right)^{i-j}}{p-q}\frac{\left((p/q)^j - 1 \right)\left((p/q)^{d+1-i} - 1 \right)}{(p/q)^{d+1} - 1} \le \frac{1}{p-q} & 1 \le j <  i \le d.
    \end{cases}
\end{equation}
Now I, II and IV of Assumption \ref{assump:main} holds with $M = C = \frac{1}{p-q}$, $\alpha = \frac{q}{p}$ and $\sigmalow=\sigmahigh=\sqrt{2}$. Furthermore, the restriction $P|_{k}$ is defined exactly as in \eqref{eqn:skew_atlas1} with $k$ in place of $d$. Thus $\rinvk{k}$ is given by \eqref{eqn:skew_atlas2} with $k$ in place of $d$, and $\bk{k} = -\rinvk{k} \mu|_k$ is the first column of $\rinvk{k}$. This entails,
\begin{align}\label{eqn:skew_atlas3}
    \bk{k}_i &= \frac{\left(p/q \right)^{i-1}}{p-q}\frac{\left((p/q) - 1 \right)\left((p/q)^{k+1-i} - 1 \right)}{(p/q)^{k+1} - 1} 
    \ge \frac{1}{q}\left(\frac{p}{q}\right)^{k-1}\frac{\left((p/q) - 1\right)}{(p/q)^{k+1}} \notag\\
    &= \frac{p-q}{p^2}  =: b_0 > 0, \quad \quad 1 \le i \le k, \> 1 \le k \le d.
\end{align}
Thus $\blowk{k} \ge b_0$ for all $1 \le k \le d$, uniformly in $d$. This shows that III of Assumption \ref{assump:main} holds with $b_0$ specified by \eqref{eqn:skew_atlas3} and $r^* = 0$. Moreover, it follows from the first equality in \eqref{eqn:skew_atlas3} that $\bk{k}_i  \le p/(p-q)$ for all $1 \le k \le d$ and $1 \le i \le k$. Therefore, recalling the definition of $\mathcal{S}(b,\cdot)$ from \eqref{startset}, for any $x \in \mathbb{R}^d_+$,
$$
x \in \mathcal{S}(b, p\supnorm{x}/(2p-1)).
$$
}
\textup{Finally we note Assumption \ref{assump:bounded_row} does not hold here. It can be checked from \eqref{eqn:skew_atlas2} that $\sum_{j = 1}^d (\rinv)_{ij}$ grows linearly in $i$ and hence, the row sums of $\rinv$ are not uniformly bounded by a dimension-independent constant. This stands in contrast with Example \ref{ex:bc}.}

\textup{The above observations result in the next theorem, which follows directly from Theorem \ref{thm:main_fromx}. As in Remark \ref{rem1}, the following bounds imply dimension-free convergence rates in the sense of \eqref{eqn:dimension_free} and \eqref{locstat}.}
\begin{theorem}\label{thm:skew_atlas}
Suppose $X$ is the RBM for the asymmetric Atlas model. Then there exist constants $\bar{C}, \bar{C}_0, t_0' > 0$ depending on $p$ but not on $d$ such that for $d > t_0'$,
\begin{multline}
    \Expect{\|X(x, t) - X(X(\infty), t)\|_{1,\sqrt{\frac{1-p}{p}}}}\\
    \le
\begin{cases}
\bar{C}\left(\sqrt{1+t^{1/4}} + \supnorm{x} e^{p\supnorm{x}/(4p-2)}\right)\> e^{-\bar{C}_0 t^{1/4}},
  &  t_0' \le t < d^4,\\\\
\bar{C}\left(\sqrt{1+t^{1/4}} + \supnorm{x} e^{p\supnorm{x}/(4p-2)}\right)\> e^{-\bar{C}_0 t/d^3},   & t \ge d^4.
\end{cases}
\end{multline}
\end{theorem}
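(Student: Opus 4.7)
The plan is to deduce Theorem \ref{thm:skew_atlas} as a direct specialization of Theorem \ref{thm:main_fromx} to the Asymmetric Atlas gap process, with the model-specific constants identified from the explicit form of $R^{-1}$. Most of the work has actually been carried out in the discursive text preceding the theorem statement; what remains is to assemble those observations into a formal verification of Assumption \ref{assump:main} and to read off the parameters that feed into the bounds \eqref{eqn:main_fromx_result1}.

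First I would record the parameters determined in the preceding discussion: from the closed-form expression \eqref{eqn:skew_atlas2} for $(\rinv)_{ij}$, Assumption \ref{assump:main} holds with $C = M = 1/(p-q)$, $\alpha = q/p$, $\sigmalow = \sigmahigh = \sqrt{2}$; from \eqref{eqn:skew_atlas3} one gets $\blowk{k} \ge b_0 := (p-q)/p^2$ uniformly in $k$, so III holds with $r^* = 0$. One should also note explicitly that Assumption \ref{assump:bounded_row} fails (so only \eqref{eqn:main_fromx_result1} is available), because the second line of \eqref{eqn:skew_atlas2} gives row sums of $\rinv$ of order $i$.

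Next I would identify the constants in Theorem \ref{thm:main_fromx} when specialized to $r^* = 0$. With $r^* = 0$ the exponents in \eqref{eqn:main_fromx_result1} collapse to $t^{1/4}$ and $t/d^{3}$, matching the form claimed in Theorem \ref{thm:skew_atlas}. Because $\alpha = q/p < 1$ is fixed (depending only on $p$), the series in the definition \eqref{l1def} of $L_1$ converges to a constant depending only on $p$, so $L_1$ may be absorbed into an overall constant $\bar{C}$ depending only on $p$. Similarly $t_0' = C_0'$ depends only on $p$ (in fact, only on the universal constant), and $\sigmalow^2 = 2$ so $B/\sigmalow^2 = B/2$.

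Finally I would choose the set $\mathcal{S}(b,B)$ containing $x$: by the upper bound $\bk{k}_i \le p/(p-q)$ derived from \eqref{eqn:skew_atlas3}, every $x \in \mathbb{R}^d_+$ lies in $\mathcal{S}\bigl(b,\, p\supnorm{x}/(2p-1)\bigr)$. Taking $B = p\supnorm{x}/(2p-1)$ in \eqref{eqn:main_fromx_result1} converts the $\supnorm{x}\exp\{B/\sigmalow^2\}$ term into $\supnorm{x}\exp\{p\supnorm{x}/(4p-2)\}$, which is exactly the factor in the statement. Substituting $r^* = 0$, absorbing $L_1$ and $p$-dependent multiplicative constants into $\bar{C}$, and renaming $C_0 \to \bar{C}_0$, yields the two cases stated in Theorem \ref{thm:skew_atlas}. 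No step is genuinely obstructive here; the only mild care is to confirm that $L_1$ is bounded uniformly in $d$ (which is immediate from geometric summability of $i^3 \alpha^{i/8}$) and to track the $p$-dependence through the constants so as to present the final bounds in the clean form requested.
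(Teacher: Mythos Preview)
Your proposal is correct and follows exactly the paper's approach: the paper states that Theorem~\ref{thm:skew_atlas} ``follows directly from Theorem~\ref{thm:main_fromx}'' after the preceding discussion has verified Assumption~\ref{assump:main} with $\alpha=q/p$, $M=C=1/(p-q)$, $\sigmalow=\sigmahigh=\sqrt{2}$, $r^*=0$, $b_0=(p-q)/p^2$, and has shown $x\in\mathcal{S}(b,\,p\supnorm{x}/(2p-1))$. Your assembly of these ingredients into \eqref{eqn:main_fromx_result1} with $B=p\supnorm{x}/(2p-1)$ and $B/\sigmalow^2=p\supnorm{x}/(4p-2)$, together with the observation that $L_1$ is bounded by a $p$-dependent constant, is precisely what is needed.
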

\end{example}

\begin{example}[Blanchet-Chen type conditions]\label{ex:bc}
\textup{Here we consider $\RBM(\Sigma, \mu, R)$ with the system parameters satisfying certain `uniformity' assumptions in dimension similar to those of \cite{blanchet-chen}. In addition, we assume $P$ is a `band matrix' (see assumption a) below).}

\textup{With the notation of Assumption \ref{assump:main}: Suppose there exist $d$-independent constants $b_0, \sigmalow, \sigmahigh > 0$, $j_0 \in \{1, \ldots, d\}$, $k_0 \in \{ 2, \ldots, d\}$ and $\alpha' \in (0, 1)$, such that
\begin{itemize}
    \item[a)] $P_{ij} = 0$ \ for all $1 \le i, j \le d $ such that $|j-i| > j_0$.
    \item[b)] $\sum_{i = 1}^d P_{ij} \le \alpha '$ \ for all $1 \le j \le d$.
    \item[c)]  $\blowk{k} \ge b_0 $ \ for $k_0 \le k \le d$.
    \item[d)] $\sigma_i \in [\sigmalow, \sigmahigh]$ \ for $1 \le i \le d$.
\end{itemize}
We check that these conditions imply Assumption \ref{assump:bounded_row} with $r^* = 0$. Recall that the only difference between Assumption \ref{assump:main} and Assumption \ref{assump:bounded_row} is II in the former and II' in the latter. Note c) and d) immediately imply III, IV of Assumption \ref{assump:main} with $r^* = 0$ and $b_0, \sigmalow, \sigmahigh$ as above.}

\textup{Condition b) and induction imply
\begin{equation}\label{eqn:bc1}
    \max_{1 \le i, j \le d} P^n_{ij} \le \max_{1 \le j \le d}\>\sum_{i = 1}^d P^n_{ij} \le \left(\max_{1 \le i, l \le d} P^{n-1}_{il}\right)\max_{1 \le j \le d}\sum_{l = 1}^d P_{lj}  \le (\alpha ')^n, \quad \quad n \ge 1.
\end{equation}
Therefore, since $\rinv = \sum_0^\infty \left(P^T \right)^n$, condition II' holds with $M = 1/(1-\alpha')$. It remains only to show I of Assumption \ref{assump:main}. To simplify the proof we suppose $j_0 = 1$; the general case is similar. Consider $i, j$ such that $j > i$. Then, by part (a) of the above assumptions, $P_{ji}^{n} = 0$ for $n < j-i$. 
This fact and \eqref{eqn:bc1} give
\begin{equation}\label{eqn:bc2}
    (\rinv)_{ij} = \sum_{n=0}^\infty (P^T)^n_{ij} = \sum_{n=j-i}^\infty P^n_{ji} \le \sum_{n=j-i}^\infty \left(\alpha' \right)^n = \frac{(\alpha')^{j-i}}{1-\alpha'}.
\end{equation}
This proves I of Assumption \ref{assump:main} with $\alpha = \alpha'$ and $C = 1/(1-\alpha')$. The case where $j_0 > 1$ is proven similarly, with $\alpha = \left(\alpha'\right)^{1/j_0}$ and $C$ being a dimension-independent multiple of $1/(1-\alpha')$. Applying these facts to Theorem \ref{thm:main_fromx} in the case of Assumption \ref{assump:bounded_row} with $r^* = 0$ gives the following theorem.}

\begin{theorem}
Suppose $X$ satisfies a) to d) of Example \ref{ex:bc} and recall $\mathcal{S}(b,\cdot)$ from \eqref{startset}. Then there exist constants $\bar{C}, \bar{C}_0, t_0' > 0$ not depending on $d$ such that for any $B \in (0, \infty)$, $x \in \mathcal{S}(b,B)$ and $d > t_0'$,
\begin{align}
    \Expect{\|X(x, t) - X(X(\infty), t)\|_{1,\left(\alpha'\right)^{1/2j_0}}}
    \le
 \bar{C}\left(\sqrt{1+t} + \supnorm{x} e^{B/\sigmalow^2}\right)\>  e^{-\bar{C}_0 \frac{t}{\log (t \wedge d)}}, \nonumber \\
 &\qquad  t  \ge  t_0'.
\end{align}
\end{theorem}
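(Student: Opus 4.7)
The proof is essentially a bookkeeping specialization of Theorem \ref{thm:main_fromx} in its Assumption \ref{assump:bounded_row} form with $r^* = 0$; the substantive verification of hypotheses is already sketched in the discussion preceding the statement, so the plan is to consolidate that verification and then read off the output of Theorem \ref{thm:main_fromx}.

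First, I would explicitly confirm that conditions a)--d) imply Assumption \ref{assump:bounded_row} with $r^* = 0$. Conditions c) and d) are exactly III (with $r^* = 0$) and IV. For II', the induction \eqref{eqn:bc1} yields $\max_{i,j} P^n_{ij} \le (\alpha')^n$ from condition b), and summing over $n \ge 0$ in $\rinv = \sum_{n \ge 0} (P^T)^n$ gives the uniform row-sum bound with $M = 1/(1-\alpha')$. For I, in the $j_0 = 1$ case the bound \eqref{eqn:bc2} is immediate; for general $j_0 \ge 2$, condition a) forces $P^n_{ji} = 0$ whenever $n < \lceil (j-i)/j_0 \rceil$ (the chain associated to $P$ cannot cover distance $j-i$ in fewer than that many steps), so
\[
(\rinv)_{ij} \;=\; \sum_{n \ge \lceil (j-i)/j_0\rceil} P^n_{ji} \;\le\; \sum_{n \ge (j-i)/j_0} (\alpha')^n \;\le\; \frac{(\alpha')^{(j-i)/j_0}}{1-\alpha'},
\]
proving I with $\alpha = (\alpha')^{1/j_0}$ and $C$ a $d$-independent multiple of $1/(1-\alpha')$.

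Next, I would apply Theorem \ref{thm:main_fromx} in the Assumption \ref{assump:bounded_row} case with $\beta = \sqrt{\alpha} = (\alpha')^{1/(2j_0)}$ and $r^* = 0$. The threshold $t_1'(0) = C_0'$ is then a dimension-free constant, and the two regimes in \eqref{eqn:main_fromx_result2} specialize to $\exp\{-C_0 t/\log t\}$ for $t_1' \le t < d$ and $\exp\{-C_0 t/\log d\}$ for $t \ge d$, both of which are uniformly dominated by $\exp\{-\bar C_0\, t/\log(t \wedge d)\}$ for a suitable $\bar C_0 > 0$. Similarly $t^{1/(1+2r^*)} = t$ appears under the square root in the prefactor.

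Finally I would verify that the prefactor $L_2$ in \eqref{l2def} is dimension-free when $r^* = 0$: one has $L_2 = k_0 + \sum_{i = k_0}^d i^2 \alpha^{i/8}$, which is bounded by $k_0 + \sum_{i = k_0}^\infty i^2 \alpha^{i/8} < \infty$, a constant depending only on $k_0$ and $\alpha$. Absorbing $L_2$ and the $r^*=0$ specializations of $C_0, C_1, t_1'$ into universal constants $\bar C, \bar C_0, t_0'$ depending only on the model data $j_0, \alpha', b_0, \sigmalow, \sigmahigh$ yields the stated bound. There is no real obstacle here; the only step that warrants a line of thought is the band-width estimate above, where one must use the delay $\lceil (j-i)/j_0 \rceil$ rather than the $j-i$ delay available when $j_0 = 1$. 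Once this is in hand the rest is direct substitution into Theorem \ref{thm:main_fromx}.
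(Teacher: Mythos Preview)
Your proposal is correct and follows exactly the paper's approach: the paper simply states that applying Theorem \ref{thm:main_fromx} in the Assumption \ref{assump:bounded_row} case with $r^*=0$ yields the result, after the verification of I, II', III, IV already done in the example. One trivial slip: with $r^*=0$ you have $k_0^{r^*}=1$, not $k_0$, in the expression for $L_2$; this does not affect the dimension-free finiteness.
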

\end{example}

\begin{remark}
A natural question in the above models is whether, for fixed $k \in \mathbb{N}$, our methods give dimension-free convergence for \emph{any set of $k$ of the $d$ coordinates} of $X$. For example, does dimension-free convergence hold for $(X_{d-k+1}(\cdot),\dots, X_d(\cdot))$ as $d$ ($\ge k$) grows? The answer is no in general. To see this, observe that in the Asymmetric Atlas model with $p>1/2$ (Example \ref{ex:skew_atlas}), the associated killed Markov chain starting from any $j \in\{1,\dots,d\}$ has a constant positive drift. Thus, although the expected number of visits to any $i<j$ decays like $\alpha^{j-i}$ for some $\alpha \in (0,1)$, the expected time spent at any $i>j$ is bounded below by a positive constant that is independent of $d, i-j$. Consequently, part I of Assumption \ref{assump:main} does not apply when analyzing the last $k$ coordinates. It is interesting to specify which (possibly $d$ dependent) subsets of $k$ coordinates exhibit dimension-free convergence and which do not. We leave this for future research.
\end{remark}

\section{Perturbations from stationarity for the Symmetric Atlas Model}\label{sec:atlas_perturbation}
This section is dedicated to the study of dimension free convergence for the Symmetric Atlas model, namely the model defined in \eqref{ascol} with $p=1/2$. We view this model as a first step to explore cases in which Assumption \ref{assump:main} fails to hold. As opposed to stretched exponential convergence rates obtained in Section \ref{sec:dfsec}, we obtain dimension-free convergence rates to stationarity for the process at a \emph{polynomial rate} if started from appropriate perturbations from stationarity.   

Recall that the gap process $X$ of the Symmetric Atlas model has the law of $\RBM(\Sigma, \mu, R)$ where $\mu = -(1, 0 \ldots, 0)$, $R = I - P^T$ and $\Sigma = 2R$ for
\begin{equation}\label{eqn:atlas1}
    P_{ij} = \begin{cases}
    1/2 \quad \quad j = i+1,\\
    1/2 \quad \quad j = i-1, \\
    0 \quad \quad \text{otherwise}.
    \end{cases}
\end{equation}
$\rinv$ is given by computation (e.g. \cite[Proof of Theorem 4]{banerjeebudhiraja}, or by taking $p \to 1/2$ in \eqref{eqn:skew_atlas2}:
\begin{equation}\label{eqn:atlas2}
    (\rinv)_{ij} = \begin{cases}
    2i\left(1 - \frac{j}{d+1}\right)\quad \quad 1 \le i \le j \le d,\\
    2j\left(1 - \frac{i}{d+1}\right) \quad \quad 1 \le j < i \le d.
    \end{cases}
\end{equation}
The above representation shows that $\rinv$ violates I, II of Assumption \ref{assump:main}, for example by considering $i = j = \floor{d/2}$. Nonetheless, $b = -\rinv \mu = \{(\rinv)_{i1} \}_{i=1}^d > 0$ and $\Sigma_{ii} = 2$ for all $i$. Therefore, there exists a stationary distribution. In fact, if $X(\infty)$ denotes the corresponding stationary distributed random variable, it holds that \cite{harrisonwilliams_expo,ichiba}
\begin{equation}\label{eqn:atlas_stationary}
    X(\infty) \sim \bigotimes_{i = 1}^d \text{Exp}\left(2\left(1 - \frac{i}{d+1}\right)\right).
\end{equation}
\subsection{Main result and applications}
Though Theorem \ref{thm:main_fromx} does not hold, we employ different methods to obtain dimension-free convergence rates to stationarity from initial conditions that perturb the stationary distribution by random variables in a 'perturbation class', which we now define. We direct the reader to Corollary \ref{cor:expo_perturbation} and Example \ref{ex:constant_perturbation} for concrete examples in this class of random variables.
\begin{definition}[Perturbation Class]\label{def:perturbation_class}
For $P_1,P_2,\delta \in (0,\infty)$, let $\mathcal{P}(P_1,P_2,\delta)$ denote the class of $\Reals^\infty$-valued random vectors $Y = (Y_1, Y_2,\dots)$ satisfying the following:
\begin{itemize}
\item[(i)] 
$
\Expect{\|Y\|_1^2} \le P_1.
$
\item[(ii)] 
$
\sup_{m \in \mathbb{N}}\Expect{\exp\left\lbrace \delta m^{-2}\|Y|_m\|_{\infty}\right\rbrace}  \le P_2.
$
\end{itemize}
\end{definition}
We will consider synchronously coupled processes, one starting from stationarity and the other starting from a perturbation of this stationary configuration by a random vector in $\mathcal{P}(P_1,P_2,\delta)$ for some $P_1,P_2,\delta \in (0,\infty)$.
Define for $Y \in \mathcal{P}(P_1,P_2,\delta)$ 
\begin{equation}\label{perdec}
\alpha^Y(n) := \Expect{\sum_{i=n+1}^{\infty} |Y_i|}, \ \ n \in \mathbb{N}.
\end{equation}
By assumption (i) above on the class $\mathcal{P}(P_1,P_2,\delta)$, note that for any $Y \in \mathcal{P}(P_1,P_2,\delta)$, $\alpha^Y(n) \rightarrow 0$ as $n \rightarrow \infty$.

\begin{theorem}\label{thm:atlas_perturbation} Fix any $P_1,P_2,\delta \in (0,\infty)$ and $Y \in \mathcal{P}(P_1,P_2,\delta)$. Let $X(\infty)$ be distributed as in \eqref{eqn:atlas_stationary} and define $X^{Y}(\infty) := \left(X(\infty) + Y|_d\right)_+$. 

Then there exist constants $t_0,t_0'', C_0, C_1 \in (0, \infty)$ not depending on $P_1,P_2,\delta$ such that for any $d \ge 1$ and any $n: \mathbb{R}_+ \rightarrow \mathbb{N}$ satisfying $\alpha^Y(n(t)) \rightarrow 0$ and $t^{-3/32}n(t) \rightarrow 0$ as $t \rightarrow \infty$,
\begin{multline}\label{eqn:atlas_perturbation_result1}
  \Expect{\|X(X^Y(\infty), t) - X(X(\infty), t) \|_1}\\
    \le
\begin{cases}
{\scriptstyle C_1\sqrt{P_1} n(t)t^{-3/32} + C_1\sqrt{P_1}\left(1+P_2^{1/4}\right)\exp\left\lbrace -C_0 \frac{\delta}{\delta + 4} t^{3/16} \right\rbrace + \alpha^Y(n(t))},
    & {\scriptstyle t^{(n)}_0 \le t < d^{16/3}},\\\\
{\scriptstyle C_1 \sqrt{P_2(d^2 + P_1)}\exp \left(-C_0 \frac{t}{d^6\log (2d)} \right)}, 
    & {\scriptstyle t\ge t_0'' \ d^4\log (2d)},
\end{cases}
\end{multline}
where $t^{(n)}_0 := \inf\{t \ge t_0 : t^{3/16} \ge 1 + 2n(t)\}$.
\end{theorem}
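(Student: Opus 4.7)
The plan is to bound the synchronously coupled $L^1$ distance by splitting the perturbation $Y$ into its first $n(t)$ coordinates (the bulk) and its tail, and handling each piece by a different mechanism. For the tail, the Lipschitz property of the Skorokhod map together with the ordering preservation of synchronous coupling gives that the contribution to $\Expect{\|X(X^Y(\infty),t) - X(X(\infty),t)\|_1}$ from coordinates $\{Y_i\}_{i > n(t)}$ is bounded, up to a factor depending on $R$, by $\Expect{\sum_{i=n(t)+1}^{\infty}|Y_i|} = \alpha^Y(n(t))$. It remains to handle the effect of a perturbation supported on the first $n(t)$ coordinates.

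For the bulk contribution I would use the piecewise-affine structure of $y \mapsto X(y,t)$ (inherited from the Lipschitz, piecewise-linear Skorokhod map) and integrate the pathwise derivative matrix $J_{ij}(t) = \partial X_i / \partial x_j$ along the segment from $X(\infty)$ to $X(\infty) + (Y|_{n(t)}, 0, \dots, 0)$. This reduces the bulk estimate to controlling $\Expect{|J_{ij}(t)|}$ for $j \le n(t)$. The core input is the random-walk-in-random-environment (RWRE) representation developed in Section \ref{pathrwre}: a unit perturbation in coordinate $j$ at time $0$ propagates to coordinate $i$ at time $t$ through a walker on $\{1, \dots, d\}$ that jumps at the reflection-time epochs of the RBM. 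For the Symmetric Atlas model this walker is a symmetric random walk absorbed at $\{0, d+1\}$, so it escapes diffusively. Combining the diffusive escape scaling of the walker, the $O(\sqrt{t})$ rate of accumulation of face-hitting events in the first $n(t)$ coordinates, and the exponential tails of $X(\infty)$ (fed by Definition \ref{def:perturbation_class}(ii) through the ratio $\delta/(\delta+4)$), yields the polynomial-times-exponential bound in the first case of \eqref{eqn:atlas_perturbation_result1}, with stretched-exponential rate $t^{3/16}$ and polynomial prefactor $n(t)\, t^{-3/32}$. The threshold $t^{3/16} \ge 1 + 2n(t)$ encoded in $t^{(n)}_0$ is precisely what guarantees that the dominant hitting-time concentration estimates are applicable.

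For the long-time regime $t \ge t_0'' d^4 \log(2d)$, the RWRE/derivative bound degrades and I would switch to a dimension-dependent Lyapunov/Harris-type analysis adapted to the Atlas gap process, giving exponential decay at rate $1/(d^6 \log d)$ from configurations of controlled second moment. The bound $\Expect{\|X^Y(\infty)\|_1^2} \lesssim d^2 + P_1$ follows from the explicit stationary law \eqref{eqn:atlas_stationary} combined with Definition \ref{def:perturbation_class}(i), and the prefactor $\sqrt{P_2(d^2 + P_1)}$ arises by an $L^2$--$L^1$ interpolation that uses the exponential moment of Definition \ref{def:perturbation_class}(ii).

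The hardest step will be the sharp derivative estimate $\Expect{|J_{ij}(t)|}$ via the RWRE representation. The environment is non-Markovian in that it depends on the whole RBM trajectory, so standard RWRE homogenization does not apply directly. I expect the argument to require: (a) concentration estimates for the face-hitting time measures up to time $t$; (b) an auxiliary coupling of the RWRE walker with a genuine symmetric simple random walk during intervals with no new face-hit; and (c) tying the $\delta/(\delta+4)$ exponential-moment control of $Y|_m$ into the probability of rare configurations where the walker is pushed beyond index $n(t)$. The specific exponents $3/16$ and $3/32$ arise from optimally balancing these diffusive and local-time scalings, and the factor $\delta/(\delta+4)$ reflects the truncation level at which the perturbation's exponential moment ceases to dominate the face-hitting fluctuations.
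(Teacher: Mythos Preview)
Your outline has the right ingredients (the derivative/RWRE representation, a split into bulk and tail, and a switch to a global argument for large $t$), but it is missing the key technical device of the paper and misplaces the role of the exponential moment.

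First, for the tail: you propose to split $Y$ into $Y|_{n(t)}$ and the rest \emph{before} applying any pathwise bound, and then control the tail piece by the Skorokhod Lipschitz constant ``up to a factor depending on $R$.'' For the Symmetric Atlas model the entries of $R^{-1}$ are of order $d$ (see \eqref{eqn:atlas2}), so this route inserts a dimension-dependent prefactor in front of $\alpha^Y(n(t))$, which is not allowed. The paper avoids this by applying Corollary~\ref{cor:$L^1$_dist} to the \emph{full} perturbation first, obtaining $\sum_{i=1}^d |Y_i|\int_0^1 \mathbb{P}_{\Theta(\gamma(u)),i}(\tau^*_0>t)\,du$, and only then splitting the sum at $n(t)$; the tail terms are bounded by $1$ trivially, yielding $\alpha^Y(n(t))$ with coefficient exactly one.

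Second, and more seriously, your analysis of the bulk term tracks face-hits only in the first $n(t)$ coordinates. This does not control the number of jumps of the walker once it leaves $\{1,\dots,n(t)\}$. The paper's proof introduces an \emph{intermediate scale} $m(t)=d\wedge\lfloor t^{3/16}\rfloor$ (obtained by setting $m(t)=\lfloor t^{1/4-\epsilon}\rfloor$ and optimizing $\epsilon=1/16$) and decomposes the survival event $\{\tau_0^*>t\}$ according to whether $\max_{s\le t} W(s)\ge m(t)$. On the event $\{\max W < m(t)\}$, Lemma~\ref{lemma:boundary_hits} applied to the first $m(t)$ coordinates yields at least $N(t)\gtrsim t^{3/16}$ face-hits, hence at least that many jumps of $W$ via \eqref{eqn:rwre_jumptimes1}, and the SSRW escape bound gives $n(t)/\sqrt{N(t)}$. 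On the complementary event, $W$ has made at least $m(t)-n(t)\ge m(t)/2$ jumps without absorption, giving $n(t)/\sqrt{m(t)}$. Both are $O(n(t)t^{-3/32})$; this balancing is precisely where the exponents $3/16$ and $3/32$ come from, not from anything involving $n(t)$ directly. The exponential moment condition on $Y$ (and on $X(\infty)$) does not control ``the walker being pushed beyond $n(t)$''; it enters only through the $\exp\bigl(\|x|_{m(t)}\|_\infty/(A\,m(t)(m(t)+1))\bigr)$ factor inside the boundary-hitting estimate \eqref{eqn:atlas_perturbation2}, and the choice $A\asymp\max(A_0,4/\delta)$ there is what produces the $\delta/(\delta+4)$ in the rate. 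Finally, for $t\ge t_0'' d^4\log(2d)$ the paper does not use a Lyapunov/Harris argument but imports the synchronous-coupling contraction bound from \cite{banerjeebudhiraja} (their equation (44)); the Cauchy--Schwarz step you describe is correct and is how $\sqrt{P_2(d^2+P_1)}$ arises.
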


\begin{remark}
Note that the bounds in Theorem \ref{thm:atlas_perturbation} show polynomial decay when $t < d^{16/3}$ and exponential decay for $t > d^6 \log (2d)$. In particular, we do not obtain the `smooth patching' of the bounds as in the results of Section \ref{sec:dfsec}. This is mainly because the methods used for the two regimes $t < d^{16/3}$ and $t > d^6 \log (2d)$ in Theorem \ref{thm:atlas_perturbation} are starkly different. The `contractions' in $\|\cdot\|_{1,\beta}$ distance between the coupled RBMs upon certain events taking place in their trajectory, which was key to the results in Section \ref{sec:dfsec}, no longer holds here due to Assumption \ref{assump:main} not being satisfied. This is the main factor behind the discontinuous qualitative and quantitative transitions between the bounds in the two regimes in Theorem \ref{thm:atlas_perturbation}. See also Remark \ref{powerrem}.
\end{remark}

The choice of $n(\cdot)$ in Theorem \ref{thm:atlas_perturbation} has been intentionally kept flexible. One can choose $n(\cdot)$ in an `optimal' way so as to minimize $\max\{n(t)t^{-3/32},\alpha^Y(n(t))\}$. This, in turn, is intricately tied to the distributional behavior of the perturbation vector $Y$ as quantified by the function $\alpha^Y(\cdot)$. We mention the following two special cases as corollaries and choose $n(\cdot)$ in a case-specific way.

For perturbations from stationarity by finitely many coordinates in the following sense, one can take $n(\cdot)$ to be the (fixed) number of perturbed coordinates to obtain the following simplified bound.

\begin{cor}[Finite perturbations from stationarity]\label{cor:finite_perturbation}
Fix an integer $m \ge 1$ and a random vector $Z \in \Reals^m$ such that its extension to $\Reals^\infty$ given by $Y = (Z, 0, \ldots )$ is in the class $\mathcal{P}(P_1,P_2,\delta)$ of Definition \ref{def:perturbation_class} for some $P_1,P_2,\delta \in (0, \infty)$. Setting $n(t) = m$ for all $t$, we have for all $d > 1+2m$,
\begin{multline*}
  \Expect{\|X(X^Y(\infty), t) - X(X(\infty), t) \|_1}\\
    \le
\begin{cases}
{\scriptstyle C_1\sqrt{P_1} mt^{-3/32} + C_1\sqrt{P_1}\left(1+P_2^{1/4}\right)\exp\left\lbrace -C_0 \frac{\delta}{\delta + 4} t^{3/16} \right\rbrace},
    & {\scriptstyle t_0\vee(1 + 2m)^{16/3} \le t < d^{16/3}},\\\\
{\scriptstyle C_1 \sqrt{P_2(d^2 + P_1)}\exp \left(-C_0 \frac{t}{d^6\log (2d)} \right)}, 
    & {\scriptstyle  t\ge t_0'' \ d^4\log (2d)}.
\end{cases}
\end{multline*}
\end{cor}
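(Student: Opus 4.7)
The plan is to apply Theorem \ref{thm:atlas_perturbation} directly with the constant choice $n(t) \equiv m$. The corollary is essentially a specialization that exploits the fact that the finite support of $Y$ forces the $\alpha^Y$ term to vanish, so no residual tail remains to be balanced against $n(t)t^{-3/32}$.

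First, I would observe that since $Y = (Z, 0, 0, \ldots)$, the function $\alpha^Y$ defined in \eqref{perdec} satisfies
\begin{equation*}
\alpha^Y(n) = \Expect{\sum_{i=n+1}^\infty |Y_i|} = 0 \qquad \text{for all } n \ge m.
\end{equation*}
Consequently, with the choice $n(t) \equiv m$, both hypotheses of Theorem \ref{thm:atlas_perturbation} on the function $n(\cdot)$ are trivially met: $\alpha^Y(n(t)) = 0 \to 0$, and $t^{-3/32}n(t) = m\, t^{-3/32} \to 0$ as $t \to \infty$.

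Next I would compute the resulting threshold $t^{(n)}_0$. Since $n(t) \equiv m$, the condition $t^{3/16} \ge 1 + 2n(t)$ reduces to $t \ge (1+2m)^{16/3}$, giving
\begin{equation*}
t^{(n)}_0 = t_0 \vee (1+2m)^{16/3}.
\end{equation*}
Substituting $n(t) \equiv m$ and $\alpha^Y(n(t)) = 0$ into \eqref{eqn:atlas_perturbation_result1} then yields the bound claimed in the corollary for $t_0 \vee (1+2m)^{16/3} \le t < d^{16/3}$. The large-$t$ regime bound on $t \ge t_0'' \, d^4 \log(2d)$ carries over verbatim from Theorem \ref{thm:atlas_perturbation}.

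The hypothesis $d > 1 + 2m$ enters only to ensure that the small-$t$ regime interval $[t_0 \vee (1+2m)^{16/3},\, d^{16/3})$ is non-empty; without it the first branch would be vacuous and only the large-$t$ bound would be informative. Because the argument is a direct substitution into Theorem \ref{thm:atlas_perturbation}, I anticipate no real obstacle; the only minor bookkeeping is verifying that the threshold $(1+2m)^{16/3}$ produced by the definition of $t^{(n)}_0$ is precisely what the corollary's statement records.
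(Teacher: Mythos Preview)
Your proposal is correct and matches the paper's approach exactly: the corollary is stated without a separate proof because it follows immediately from Theorem \ref{thm:atlas_perturbation} by taking $n(t)\equiv m$, which makes $\alpha^Y(n(t))=0$ and yields $t_0^{(n)}=t_0\vee(1+2m)^{16/3}$.
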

The following corollary addresses the special case of perturbations from stationarity by independent exponential random variables.
\begin{cor}[Independent exponential perturbations]\label{cor:expo_perturbation}
Consider $Y = (Y_1, Y_2, \ldots)$ where $\{Y_i\}_{i \ge 1}$ are independent random variables with $Y_i \sim Exp(i^{1+\beta})$ (exponential with mean $i^{-(1+\beta)}$), for some $\beta > 0$. Then $Y \in \mathcal{P}(P_1,P_2,\delta)$ with $P_1 := \sum_{1}^\infty i^{-2(1+\beta)} + \left(\sum_{1}^\infty i^{-(1+\beta)}\right)^2$, $P_2 := 1 + \sum_{1}^\infty i^{-(1+\beta)}$ and $\delta:=1/2$. Setting $n(t) = \floor{t^{\frac{3}{32(1+\beta)}}}$, we have
\begin{multline*}
  \Expect{\|X(X^Y(\infty), t) - X(X(\infty), t) \|_1}\\
    \le
\begin{cases}
\left(C_1\sqrt{P_1} + \frac{2}{\beta} \right)t^{-\frac{\beta}{1 + \beta} \frac{3}{32}} + C_1\sqrt{P_1}\left(1+P_2^{1/4}\right)\exp\left\lbrace - \frac{C_0}{9} t^{3/16} \right\rbrace,
    & t_0' \le t < d^{16/3},\\\\
C_1 \sqrt{P_2(d^2 + P_1)}\exp \left(-C_0 \frac{t}{d^6\log (2d)} \right), 
    & t\ge t_0'' \ d^4\log (2d),
\end{cases}
\end{multline*}
where $t_0' \in (0, \infty)$ does not depend on $d$ or $\beta$.
\end{cor}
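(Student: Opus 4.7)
The proof breaks into two independent tasks: (a) verifying that $Y$ belongs to the perturbation class $\mathcal{P}(P_1,P_2,1/2)$ with the advertised constants, and (b) substituting the prescribed $n(t)$ into Theorem \ref{thm:atlas_perturbation} and simplifying. Both parts are purely computational once the right estimates are lined up.

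For condition (i) of Definition \ref{def:perturbation_class}, independence of the $Y_i$ together with $\mathbb{E}[Y_i] = i^{-(1+\beta)}$ and $\mathrm{Var}(Y_i) = i^{-2(1+\beta)}$ gives
\[
\mathbb{E}\big[\|Y\|_1^2\big] = \mathrm{Var}\Big(\sum_i Y_i\Big) + \Big(\sum_i \mathbb{E}[Y_i]\Big)^2 = \sum_{i=1}^\infty i^{-2(1+\beta)} + \Big(\sum_{i=1}^\infty i^{-(1+\beta)}\Big)^2 = P_1.
\]
For condition (ii) with $\delta=1/2$, I would estimate the MGF of a max of independent exponentials through the pointwise inequality $\exp(\lambda\max_{i\le m}Y_i)\le \sum_{i=1}^m \exp(\lambda Y_i)$, combined with the classical identity $\mathbb{E}[\exp(\lambda Y_i)] = i^{1+\beta}/(i^{1+\beta}-\lambda)$ valid for $\lambda<i^{1+\beta}$. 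At $\lambda = m^{-2}/2$ one has $i^{1+\beta}-\lambda\ge i^{1+\beta}/2$ uniformly in $i\ge 1$ and $m\ge 1$, hence
\[
\mathbb{E}\big[\exp(\tfrac{1}{2}m^{-2}\|Y|_m\|_\infty)\big] \le 1 + m^{-2}\sum_{i=1}^m i^{-(1+\beta)} \le 1 + \sum_{i=1}^\infty i^{-(1+\beta)} = P_2,
\]
uniformly in $m$. The $m^{-2}$ scaling in Definition \ref{def:perturbation_class}\,(ii) is exactly what keeps $\lambda$ strictly below $i^{1+\beta}$ for every $i\ge 1$, including the critical index $i=1$ whose rate is the smallest.

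For part (b), the hypotheses on $n(\cdot)$ in Theorem \ref{thm:atlas_perturbation} are easy to check. Directly, $n(t)t^{-3/32} \le t^{-3\beta/(32(1+\beta))}\to 0$. An integral comparison gives $\alpha^Y(n) = \sum_{i>n} i^{-(1+\beta)} \le n^{-\beta}/\beta$, so for $t$ large enough that $n(t)\ge 1$ we obtain $\alpha^Y(n(t)) \le (2/\beta)\,t^{-3\beta/(32(1+\beta))}$, where the factor $2$ absorbs the floor. Plugging these bounds into the first branch of \eqref{eqn:atlas_perturbation_result1}, and using $\delta/(\delta+4) = 1/9$ when $\delta=1/2$, produces the stated first-branch estimate; the second branch is inherited verbatim from Theorem \ref{thm:atlas_perturbation}.

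The exponent $n(t) = t^{3/(32(1+\beta))}$ is the optimal choice, obtained by equating the polynomial term $n(t)t^{-3/32}$ from Theorem \ref{thm:atlas_perturbation} with the tail $\alpha^Y(n(t))\asymp n(t)^{-\beta}$, which gives $n^{1+\beta}\asymp t^{3/32}$. I do not expect a serious obstacle in this argument; the only mildly delicate point is matching the $\delta=1/2$ and $m^{-2}$ normalization in (ii) to the exponential rates $i^{1+\beta}$, which is precisely what the crude union-bound estimate is calibrated to handle.
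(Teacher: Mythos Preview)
Your overall strategy matches the paper's proof: verify $Y\in\mathcal{P}(P_1,P_2,1/2)$, then substitute $n(t)=\lfloor t^{3/(32(1+\beta))}\rfloor$ into Theorem \ref{thm:atlas_perturbation} and simplify. Part (a)(i) and all of part (b) are correct and essentially identical to the paper.

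There is, however, a genuine gap in your verification of condition (ii). The pointwise inequality you state, $\exp(\lambda\max_{i\le m}Y_i)\le \sum_{i=1}^m \exp(\lambda Y_i)$, is correct, but taking expectations and using $\mathbb{E}[\exp(\lambda Y_i)]=i^{1+\beta}/(i^{1+\beta}-\lambda)$ gives
\[
\sum_{i=1}^m \frac{i^{1+\beta}}{i^{1+\beta}-\lambda}=\sum_{i=1}^m\Big(1+\frac{\lambda}{i^{1+\beta}-\lambda}\Big)\ge m,
\]
which is unbounded in $m$ and therefore cannot yield the claimed uniform bound $1+m^{-2}\sum_{i=1}^m i^{-(1+\beta)}\le P_2$. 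The fix is immediate: apply the max-to-sum bound to the nonnegative quantities $e^{\lambda Y_i}-1$ instead, obtaining $e^{\lambda\max Y_i}-1=\max_i(e^{\lambda Y_i}-1)\le\sum_i(e^{\lambda Y_i}-1)$, whence
\[
\mathbb{E}\big[e^{\lambda\max Y_i}\big]\le 1+\sum_{i=1}^m\frac{\lambda}{i^{1+\beta}-\lambda}\le 1+2\lambda\sum_{i=1}^m i^{-(1+\beta)}=1+m^{-2}\sum_{i=1}^m i^{-(1+\beta)},
\]
using your estimate $i^{1+\beta}-\lambda\ge i^{1+\beta}/2$. This is equivalent to the tail-integral computation the paper uses; with this correction your proof is complete and matches the paper's argument.
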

The proof of this corollary makes clear one could consider independent $Y_i \sim Exp(\lambda_i)$ for any sequence $\{ \lambda_i \}_{i \ge 1}$ such that $\|Y\|_1$ has finite expectation and variance. We choose $\lambda_i = i^{1+\beta}$ as it lends itself to simple and explicit calculations of the rates of convergence.
\begin{proof}[Proof of Corollary \ref{cor:expo_perturbation}] $Y \in \mathcal{P}(P_1,P_2,\delta)$ is the result of the following calculations:
\begin{align*}
\Expect{\|Y\|_1^2} &= 
\operatorname{Var}(\|Y\|_1) + \left(\Expect{\|Y\|_1}\right)^2 
= \sum_{i=1}^\infty i^{-2(1+\beta)} + \left(\sum_{i=1}^\infty i^{-(1+\beta)}\right)^2,\\
    \Expect{\exp \left\lbrace\frac{\supnorm{Y|_m}}{2m^2} \right\rbrace} 
    &\le 1 + m^{-2}\sum_{i=1}^m i^{-(1+\beta)} 
    \le 1 + \sum_{i=1}^\infty i^{-(1+\beta)}, \quad \text{for all} \ m \in \mathbb{N}.
\end{align*}
With $n(t) = \floor{t^{\frac{3}{32(1+\beta)}}}$ we have by basic calculus that $\alpha^Y(n(t)) \le \frac{2}{\beta} t^{-\frac{\beta}{1 + \beta} \frac{3}{32}}$ and $n(t)t^{-\frac{3}{32}} \le t^{-\frac{\beta}{1 + \beta} \frac{3}{32}}$ for $t \ge 2$. Applying Theorem \ref{thm:atlas_perturbation} gives the corollary.
\end{proof}
We close the series of applications with the most basic example, in which the perturbation $Y$ is a constant.
\begin{example}[Constant perturbations]\label{ex:constant_perturbation}
    Consider $Y = (Y_1, Y_2, \ldots)$ such that $Y$ is a constant vector satisfying $\|Y\|_1 < \infty$, which implies $\|Y\|_\infty < \infty$. Choose $n(t)$ to be any function such that $n(t) \ge 1$, $t \mapsto n(t)$ is non-decreasing for $t \ge 0$, and $n(t) t^{-3/32} \to 0$ as $t \to \infty$.  Then Theorem \ref{thm:atlas_perturbation} holds for any such $n(t)$ if we set $\delta = 1$, $P_1 = \|Y\|_1^2$ and $P_2 = \exp\left\{\|Y\|_\infty\right\}$. The rate of convergence then is determined by the function $t \mapsto \max\{n(t)t^{-3/32},\alpha^Y(n(t))\}$.

    In particular, Corollary \ref{cor:finite_perturbation} holds when for some $m \ge 1$ we have $Y_i = 0$ for $i \ge m+1$. 
\end{example}
\begin{remark}\label{powerrem}
In Theorem \ref{thm:atlas_perturbation} and Corollaries \ref{cor:finite_perturbation} and \ref{cor:expo_perturbation}, the upper bound has a polynomial decay in $t$ for large $d$ (for $t < d^{16/3}$) as opposed to the stretched exponential decay observed in Section \ref{sec:dfsec} when Assumption \ref{assump:main} applies. Although we do not currently have associated lower bounds, we strongly believe that the $L^1$-Wasserstein distance of the perturbed system (as defined in Theorem \ref{thm:atlas_perturbation}) from stationarity indeed shows polynomial decay for the Symmetric Atlas model. This belief stems from the dynamics of the associated killed Markov chain whose transition kernel is prescribed by $P$ (see discussion after Assumption \ref{assump:main}) which are shown throughout this article to govern convergence rates to stationarity. This Markov chain for the Symmetric Atlas model behaves as a simple random walk away from the cemetery state and thus lacks the `strong drift' towards the cemetery state characteristic of the models considered in Section \ref{sec:dfsec}. This results in the slower convergence rates.

The polynomial rates of convergence to stationarity obtained in \cite{banerjee2020rates} for the Potlatch process on $\mathbb{Z}^k$, which (for $k=1$) can be loosely thought of as a `Poissonian version' of the gap process of the infinite Symmetric Atlas model constructed in \cite{pitman_pal}, lends further evidence to this belief.  
\end{remark}
\subsection{A pathwise derivative approach towards convergence rates} \label{pathrwre}
The proof of Theorem \ref{thm:atlas_perturbation} is based on an analysis of the derivative process (derivative taken with respect to initial conditions) of the RBM $X$. The key observation made here is a representation of this derivative process in terms of a \emph{random walk in a certain random environment} constructed from the random order in which the RBM hits distinct faces of the orthant $\mathbb{R}^d_+$ (see \eqref{eqn:rwre_deriv}). This representation, in turn, is based on a succinct form  for the derivative process obtained in \cite[Theorem 1.2]{andres2009diffusion}. This is summarized in Theorem \ref{thm:derivative_process} below. This representation is interesting in its own right and we believe a systematic study of the derivative process is at the heart of obtaining convergence rates in more general cases where Assumption \ref{assump:main} does not hold. Moreover, as the relationship between the derivative process and the (random) transition kernel of the random walk in the random environment is an exact equality \eqref{eqn:rwre_deriv}, this representation should also lead to \emph{lower bounds for convergence rates}. We hope to report on this in future work. 

In the probability literature, random walks in random environments most commonly appear as random walks on graphs with jump probabilities given by i.i.d. random variables (see e.g. \cite{sznitman} or \cite{dembo_rwre} for a model with i.i.d. holding times). Since the process we will consider in Theorem \ref{thm:derivative_process} is substantially different, we take some care first to define it.
\begin{definition}[$RW(\mathbf{a},i_0)$]\label{def:rwre} Here we define a random walk on $\{0, \ldots, d+1 \}$, for $d\ge 1$, in a given fixed environment $\mathbf{a}$ and initial condition $i_0$. Call any sequence $\mathbf{a} := (l_k,t_k)_{k \ge 0}$ \emph{admissible} if 
\begin{itemize}
    \item[(i.)] $(l_k,t_k) \in \{1,\dots,d\} \times [0,\infty)$ for all $k \ge 0$,
    \item[(ii.)] $t_0 = 0$ and $\{t_k\}_{k \ge 0}$ is strictly increasing.
\end{itemize}
For any admissible sequence $\mathbf{a}$ and any $i \in \{1,\dots,d\}$, define the projected admissible sequence $\mathbf{a}_i = (l_k^i,t_k^i)_{k \ge 0} = (i,t_k^i)_{k \ge 0}$ to be the unique admissible sequence obtained from the elements of the set $\mathbf{a} \cap \left(\{i\} \times [0,\infty)\right)$. In words, this sequence consists of points in $\mathbf{a}$ with first coordinate equal to $i$ enumerated in ascending order of their second coordinates.

Define the random walk in environment $\mathbf{a}$ started from $i_0 \in \{0,\dots,d+1\}$, written as $RW(\mathbf{a},i_0)$, to be the time-inhomogeneous Markov process $W$ with state space $\{0,\dots,d+1\}$ whose law is uniquely characterized by the following:
\begin{itemize}
    \item[(i.)] $W(0) = i_0$,
    \item[(ii.)] $W$ is absorbed at $0$ and $d+1$,
    \item[(iii.)] Define the `jump times' $\{T_k\}_{k\ge 0} = \{T_k(\mathbf{a}, i_0)\}_{k\ge 1}$ as follows: $T_0 = 0, T_1 = t_1^{i_0}$ and
    $$T_{k+1} = \min\left\{t_j^i \> : \> i = W(T_k), \> t_j^i > T_k, \> (i, t_j^i) \in \mathbf{a} \right\}, \quad \quad k \ge 1.$$
   The transition probabilities of $W$ at the jump times are then given by
    \begin{align*}
        1/2 &= \mathbb{P}_{\mathbf{a},i_0}\left(W(T_{k+1}) = W(T_k) + 1 \mid \left(W(T_k), T_k \right)\right) \\
        &= \mathbb{P}_{\mathbf{a},i_0}\left(W(T_{k+1}) = W(T_k) - 1 \mid \left(W(T_k), T_k \right)\right).
    \end{align*}
    \item[(iv.)] $\mathbb{P}_{\mathbf{a},i_0}\left(W(t) = W(T_k) \mid \left(W(T_k), T_k \right)\right) = 1$ for $t \in [T_k, T_{k+1})$,  $k\ge 0$,
    \item[(v.)] for $0 \le t < t'$, 
    $$\mathbb{P}_{\mathbf{a},i_0}\left(W(t') = W(t) \mid W(t) = 0\right) = \mathbb{P}_{\mathbf{a},i_0}\left(W(t') = W(t) \mid W(t) = d+1\right) = 1.$$
\end{itemize}
In the above, we used the suffix in the probabilities to highlight the dependence of the law of $W$ on $\mathbf{a}$ and $i_0$. The process $W$ can be seen as a simple random walk absorbed at $0, d+1$ with jump times prescribed by the points in $\mathbf{a}$ encountered along its trajectory.

Finally, define
$$
J_{\mathbf{a},i_0}(t) := \#\left\{s \in [0,t] \>:\> W(s-) \neq W(s)\right\} = \#\left\{k \ge 1 : T_k \in [0, t]\right\},
$$
to be the number of jumps made by $RW(\mathbf{a},i_0)$ in the time interval $[0,t]$.
\end{definition}

We now define a few additional conventions and notations required to state the theorem. For two vectors $x, y \in \Reals^d$ we write $\langle x, y \rangle$ for the standard inner product, and $e_i, 1 \le i \le d$ for the standard basis vectors. For a $d\times d$ matrix $R$, write $R^{(i)}$ for the $i$-th column vector of $R$.

For $X$ started at $x \in \Reals_+^d$, $x > 0$, define a sequence of stopping times as follows: $\tau_0(x) = 0, \tau_1(x) = \inf\left\{t > 0 \> | \> X_i(x,t) = 0 \text{  for some  } i \right\}$ and for $k \ge 1$,
\begin{equation}\label{eqn:tauk}
    \tau_{k+1}(x) = \inf\{t > \tau_{k}(x) \> | \> X_i(x,t) = 0, \ X_j(x,\tau_{k}) = 0 \text{  for some } \>i,j \text{  such that  } j\neq i \}.
\end{equation}
Also define the sequence of integers $i_k(x)$ for $k \ge 0$ as follows: Fix any $i_0(x) \in \{1, \ldots, d \}$ and define the remaining $i_k(x)$ by $X_{i_k(x)}(x,\tau_k(x)) = 0$, i.e. $i_k(x)$ is the index of the coordinate hitting zero at time $\tau_k(x)$ for $k \ge 1$. In other words, $\{\tau_k(x)\}_{k \ge 1}$ represent the times when $X$ has crossed from one face of the orthant to another, and $i_k(x)$ tells which coordinate has hit zero at crossing time $\tau_k(x)$. We suppress dependence of $\tau_k, i_k$ on $x$ when there is no risk of confusion.

From \cite[Theorem 1.9]{sarantsev2015triple}, the Atlas model almost surely has no simultaneous collisions, which in the context of this paper means two different coordinates of $X$ do not hit $0$ at the same time. Thus, almost surely, for any $x \in \mathbb{R}^d_+, t > 0$, $X_i(x,t) = 0$ for at most one $i \in \{1,\dots,d\}$. Therefore, $i_k, \tau_k$ are well-defined and the sequence $\left\{(i_k, \tau_k)\right\}_{k \ge 0}$ is admissible in the sense of Definition \ref{def:rwre}. This fact is essential for the random walk representation below.

The following theorem gives a representation \eqref{andrep} of the derivative process of the RBM $X$, which is a specialization of \cite[Theorem 1.2]{andres2009diffusion} to the present context. This representation is then encoded in terms of a random walk in an admissible environment constructed from hitting times of faces of $\mathbb{R}^d_+$ by the RBM $X$. This connection is the main message of the theorem, and is key to proving Theorem \ref{thm:atlas_perturbation}.

\begin{theorem}\label{thm:derivative_process} For every $t \in [0,\infty)$ and every $x>0$, the map $y \mapsto X(y,t)$ is almost surely differentiable at $x$. For each $i_0 \in \{1, \ldots, d\}$ the process $$\eta^{i_0}(x, t) := \lim_{\varepsilon \to 0} \varepsilon^{-1} \left(X(x +  \varepsilon e_{i_0}, t) - X(x, t) \right)$$ has a right-continuous modification defined on $\left[0, \infty\right)$ such that
\begin{equation}\label{andrep}
\eta^{i_0}(x, t) = S_k^{i_0}(x), \quad \text{ for } t\in\left[\tau_k, \tau_{k+1}\right), \quad k \ge 0,
\end{equation}
where $\left\{S^{i_0}_k(x)\right\}_{k \ge 0}$ is a sequence of d-dimensional random vectors iteratively defined by
$$
\begin{cases} S_0^{i_0}(x) = e_{i_0} \\
    S_{k+1}^{i_0}(x) = S_k^{i_0}(x) - \langle S_k^{i_0}(x), e_{i_{k+1}} \rangle R^{(i_{k+1})}, \quad k \ge 0.
    \end{cases}
$$
Moreover, $\Theta(x) := \left\{ \left(i_k,\tau_k \right)\right\}_{k\ge0}$ is admissible and the derivative process has the following representation in terms of the law of $RW(\Theta(x), i_0)$:
\begin{equation}\label{eqn:rwre_deriv}
    \eta^{i_0}_j(t, x) = \mathbb{P}_{\Theta(x), i_0}(W(t) = j), \ j=1,\dots,d.
\end{equation}
\end{theorem}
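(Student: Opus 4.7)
The plan is to prove the theorem in three steps: (i) deduce existence of the derivative together with the representation \eqref{andrep} from the general result of \cite[Theorem 1.2]{andres2009diffusion}, and verify that $\Theta(x)$ is almost surely admissible; (ii) identify the deterministic iteration defining $\{S_k^{i_0}(x)\}$ with one step of the random walk dynamics in Definition \ref{def:rwre}; and (iii) extend the identification from the times $\tau_k$ to all $t \ge 0$.

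For (i), \cite[Theorem 1.2]{andres2009diffusion} applies to SDEs with oblique reflection in smooth domains and yields both pathwise differentiability of $y \mapsto X(y,t)$ at $x>0$ and a right-continuous modification of $\eta^{i_0}(x,\cdot)$ that is piecewise constant with update $-\langle S_k^{i_0}(x),e_{i_{k+1}}\rangle R^{(i_{k+1})}$ at each face-crossing time $\tau_{k+1}$; specialized to the present setting this is exactly \eqref{andrep}. To establish that $\Theta(x) = \{(i_k,\tau_k)\}_{k\ge 0}$ is admissible per Definition \ref{def:rwre}, the conditions $(i_k,\tau_k)\in\{1,\dots,d\}\times[0,\infty)$ and $\tau_0=0$ are immediate from the construction of $\tau_k$ and $i_k$ on the event $\{x>0\}$, and strict monotonicity of $\{\tau_k\}_{k \ge 1}$ follows from the no-simultaneous-collisions result of \cite[Theorem 1.9]{sarantsev2015triple} quoted in the excerpt, which ensures that each $\tau_k$ is a genuinely new face-hitting time, combined with positive recurrence of $X$ which ensures $\tau_k<\infty$ almost surely for every $k$.

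For (ii), the key is to identify the iteration with the one-step transition of $W$. For the Symmetric Atlas model $R=I-P^T$ with $P$ given by \eqref{eqn:atlas1}, so a direct computation gives
\begin{equation*}
R^{(i)} \;=\; e_i \;-\; \tfrac{1}{2}\,e_{i-1}\,\mathbbm{1}_{\{i\ge 2\}} \;-\; \tfrac{1}{2}\,e_{i+1}\,\mathbbm{1}_{\{i\le d-1\}}.
\end{equation*}
Reading the update $S_{k+1}^{i_0} = S_k^{i_0} - \langle S_k^{i_0},e_{i_{k+1}}\rangle R^{(i_{k+1})}$ coordinate by coordinate shows that the entry of $S_k^{i_0}$ at site $i_{k+1}$ is emptied and one-half of its value is added to each of $i_{k+1}\pm 1$ (with mass at the boundary sites $1$ or $d$ correspondingly lost to the absorbing states $0$ or $d+1$). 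Comparing with clauses (iii.) and (v.) of Definition \ref{def:rwre}, this is precisely the update of the subprobability vector $\bigl(\mathbb{P}_{\Theta(x),i_0}(W(\tau_k)=j)\bigr)_{j=1}^d$ when the environment fires the event $(i_{k+1},\tau_{k+1})$: if $W(\tau_k)=i_{k+1}$ then $\tau_{k+1}$ is a jump time of $W$ and $W$ moves uniformly to $i_{k+1}\pm 1$, while otherwise $W$ is unchanged, so the conditional distribution of $W(\tau_{k+1})$ evolves exactly as $S_{k+1}^{i_0}$. An induction on $k$, with base case $S_0^{i_0}(x)=e_{i_0}$ and $W(0)=i_0$, then yields $\langle S_k^{i_0}(x),e_j\rangle=\mathbb{P}_{\Theta(x),i_0}(W(\tau_k)=j)$ for all $k\ge 0$ and all $j\in\{1,\dots,d\}$.

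For (iii), on any interval $t\in[\tau_k,\tau_{k+1})$ the representation \eqref{andrep} gives $\eta^{i_0}(x,t)=S_k^{i_0}(x)$, while on the same interval $W(t)=W(\tau_k)$ almost surely because no element of $\Theta(x)$ has second coordinate in $(\tau_k,\tau_{k+1})$ and so no jump time of $W$ lies there; combining this with (ii) produces \eqref{eqn:rwre_deriv}. I expect the most delicate point to be the bookkeeping at boundary faces $i_{k+1}\in\{1,d\}$, where the iteration strictly loses total mass and the walk enters the absorbing states $0$ or $d+1$; some care is needed to ensure that \eqref{eqn:rwre_deriv} remains an equality for the interior coordinates $j\in\{1,\dots,d\}$ even though the total mass on these coordinates may strictly decrease over time.
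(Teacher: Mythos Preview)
Your proposal is correct and follows essentially the same route as the paper: both invoke \cite[Theorem 1.2]{andres2009diffusion} together with the no-triple-collision result of \cite{sarantsev2015triple} for \eqref{andrep}, and both prove \eqref{eqn:rwre_deriv} by showing that $\{S_k^{i_0}\}$ and the law of $W$ satisfy the same recursion with the same initial condition. The only cosmetic differences are that the paper carries out the orthogonal-projection calculation explicitly when specializing Andres' formula to obtain \eqref{andrep}, and it augments the state space by coordinates $0$ and $d+1$ to track absorbed mass (addressing exactly the boundary bookkeeping you flag), whereas you compute $R^{(i)}$ directly and induct on the interior coordinates.
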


We illustrate in Figure \ref{RBM_RW} the connection between the paths of the RBM $X$ and the random walk $W$ when $d=2$. In the figure, $i_1 = 1, i_2 = 2$ and $i_3 = 1$ corresponding to the index of the coordinates at times $\tau_i, i = 1, 2, 3$ when $X$ crosses faces of the orthant. The corresponding walk $W$, which begins at state $2$, does not jump at time $\tau_1$ because $W(\tau_1-) \neq i_1 = 1$. $W$ does jump at time $\tau_2$ since $W(\tau_2-) = i_2 = 2$ and thus $\tau_2$ is equal to the first jump time $T_1$. 
\begin{figure}
    \includegraphics[scale=2]{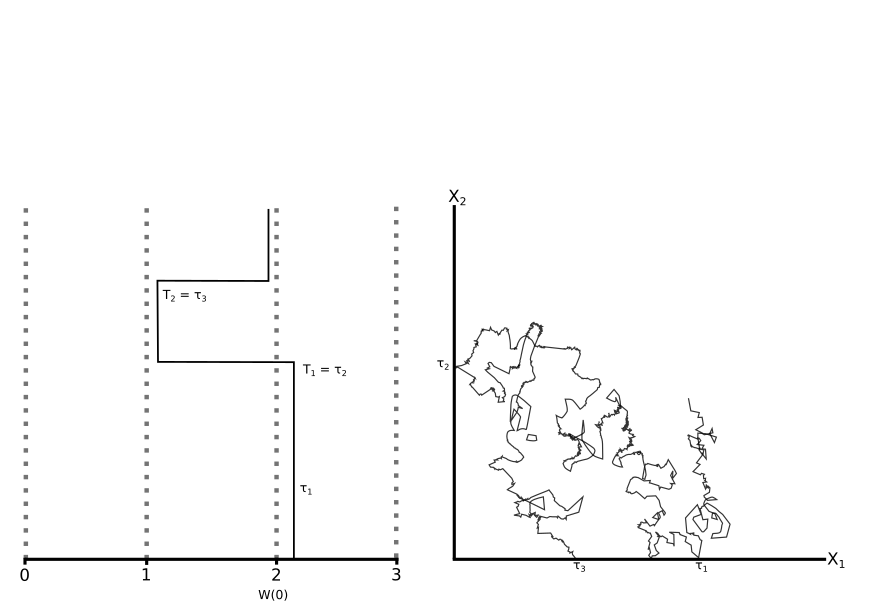}
    \caption{Illustration of the connection between RBM $X$ and the random walk $W$ for $d=2$.}
    \label{RBM_RW}
\end{figure}
\begin{remark}\label{rem:rwre_jumptimes}
We clarify the relationship between boundary-hitting times of the process $X$ started at $x>0$ and the jump times of $W \sim RW(\Theta(x),i_0)$, $i_0 \in \{1,\dots,d\}$.

Suppose $X$ begins at $x > 0$ and $W(t) = i \in \{1,\dots,d\}$ at some time $t \ge 0$. Then at the first time after $t$ that $X_i$ hits zero, $W$ will jump to $i-1$ or $i+1$ with equal probability.

Now suppose for a given time interval $[0, T]$ and integer $m \ge 2$ the random walk $W$ starting from $i_0$ remains in the set $\{1, \ldots, m-1 \} $. Suppose also that there are random times $0 = \eta^0_m <\eta^1_m < \eta^2_m< \dots < \eta^N_m < T$, with $\eta^{j+1}_m - \eta^{j}_m >1$ for each $j \in \{0,\ldots,N-1\}$, such that $X$ has hit each of the first $m$ coordinates in every interval $(\eta^j_m + 1, \eta^{j+1}_m], \, j \in \{0,\ldots,N-1\}$. Then the walk has made at least $N$ jumps in the time interval $[0, T]$. In particular, with $\N_{m}(x,T)$ defined as in \eqref{eqn:boundary_hit2} below,
\begin{multline}\label{eqn:rwre_jumptimes1}
   \left\{ \N_{m}(x, T) \ge N, \> W(s) \in \{1, \ldots, m-1\} \quad \text{for} \quad s\in [0, T]\right\} \\ \subseteq \left\{ J_{\Theta(x),i_0}(T) \ge N, \> W(s) \in \{1, \ldots, m-1\} \quad \text{for} \quad s\in [0, T]\right\}.
\end{multline}
This fact will be crucially used in the proof of Theorem \ref{thm:atlas_perturbation}.

We also note here that the process $W$ is non-standard in the sense that the number of jumps of $W$ in a certain time interval depends on the whole trajectory of $W$ in that interval, which makes its analysis challenging.
\end{remark}
\begin{remark}\label{rem:atlas_deriv_remark}We have stated Theorem \ref{thm:derivative_process} for the Symmetric Atlas model examined here, but an analogous result holds for any RBM \eqref{eqn:rbm} that almost surely does not hit intersections of faces (corners) of the orthant $\mathbb{R}^d_+$. In that case one-step transitions are given by the matrix $P$ (from $R = I - P^T$). See \cite{karatzas_skewatlas} for conditions guaranteeing when the gap process of an Atlas model (symmetric or asymmetric) does not hit corners, and \cite{sarantsev2015triple} for similar conditions for a general RBM.

For the general RBM \eqref{eqn:rbm}, even when corners are hit with positive probability, \cite{ramanan_mandelbaum} shows that the derivative process exists in an appropriate sense. However, in the general case we do not have a random walk representation as in Theorem \ref{thm:derivative_process}. \cite{blanchent2020efficient} has recently obtained an upper bound for the derivative process in terms of products of random matrices derived in terms of the boundary hitting times and locations of the RBM and the killed Markov process associated with $P$ (see \cite[Lemma 5]{blanchent2020efficient}). This presents an opportunity to generalize the methods used here, and we defer it to future work.
\end{remark}
The following corollary to Theorem \ref{thm:derivative_process} is the key tool in proving Theorem \ref{thm:atlas_perturbation}. 
\begin{cor}\label{cor:$L^1$_dist} Fix $x, \tilde{x} \in \Reals_+^d$ with $x > 0$ and let $\gamma(u) = x + u(\tilde{x} - x)$ for $u \in [0, 1]$. Then, writing $\tau^*_0 := \inf\{s \ge 0 : W(s) = 0\}$,
\begin{equation}\label{eqn:rwre_ineq_m}
    \|X(\tilde{x}, t) - X(x, t)\|_1 \le \sum_{i = 1}^d |(\tilde{x} - x)_i| \int_{[0, 1)}\mathbb{P}_{\Theta(\gamma(u)), i}(\tau^*_0 > t) \> du, \ \ t \ge 0.
\end{equation}
\end{cor}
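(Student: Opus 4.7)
The plan is to write $X(\tilde{x}, t) - X(x, t)$ as an integral along the segment $\gamma(u) = x + u(\tilde x - x)$, $u \in [0,1]$, using the fundamental theorem of calculus; identify the integrand with a linear combination of the derivative processes $\eta^i(\gamma(u), t)$ via Theorem \ref{thm:derivative_process}; then use non-negativity of $\eta^i_j$ to pass the $\|\cdot\|_1$ inside the integral, and finally invoke the random-walk representation \eqref{eqn:rwre_deriv} to bound the coordinate sums by $\mathbb{P}_{\Theta(\gamma(u)), i}(\tau^*_0 > t)$.

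First I would observe that for each $u \in [0,1)$ we have $\gamma(u) \ge (1-u)x > 0$ (since $\tilde x \ge 0$ and $x > 0$), so Theorem \ref{thm:derivative_process} applies at $\gamma(u)$ and the directional derivative
\[
\frac{d}{du} X(\gamma(u), t) \;=\; \sum_{i=1}^d (\tilde x_i - x_i)\, \eta^i(\gamma(u), t)
\]
exists. Second, I would apply the fundamental theorem of calculus coordinatewise to obtain
\[
X_j(\tilde x, t) - X_j(x, t) \;=\; \int_{[0,1)} \sum_{i=1}^d (\tilde x_i - x_i)\, \eta^i_j(\gamma(u), t)\, du, \qquad 1 \le j \le d.
\]
Third, because synchronous coupling preserves the componentwise order (Theorem \ref{thm:rbm_monotonicity}(iii)), or equivalently because \eqref{eqn:rwre_deriv} exhibits $\eta^i_j$ as a probability, each $\eta^i_j(\gamma(u), t) \ge 0$. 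Taking absolute values inside the integral and summing over $j$ via the triangle inequality then gives
\[
\|X(\tilde x, t) - X(x, t)\|_1 \;\le\; \int_{[0,1)} \sum_{i=1}^d |(\tilde x - x)_i| \sum_{j=1}^d \eta^i_j(\gamma(u), t)\, du.
\]
Finally, using \eqref{eqn:rwre_deriv},
\[
\sum_{j=1}^d \eta^i_j(\gamma(u), t) \;=\; \mathbb{P}_{\Theta(\gamma(u)), i}\bigl(W(t) \in \{1,\dots,d\}\bigr) \;\le\; \mathbb{P}_{\Theta(\gamma(u)), i}\bigl(\tau^*_0 > t\bigr),
\]
since $\{W(t) \in \{1,\dots,d\}\} \subseteq \{\tau^*_0 > t\}$. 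This yields the claimed bound \eqref{eqn:rwre_ineq_m}.

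The main technical point is justifying the use of FTC on $u \mapsto X(\gamma(u), t)$, as Theorem \ref{thm:derivative_process} only asserts existence of the right-continuous modification of the derivative in $t$ for fixed $x$. The cleanest route is to show that, for fixed $t$, the map $u \mapsto X(\gamma(u), t)$ is Lipschitz in $u$: this follows from the Lipschitz property of the oblique Skorokhod map (under the standing assumption $R = I - P^T$ with $P$ substochastic and transient, see \cite{harrisonreiman}), which bounds $\|X(y, t) - X(y', t)\|_1$ by a deterministic constant times $\|y - y'\|_1$. Lipschitz continuity gives absolute continuity in $u$, hence FTC; combining this with the pointwise identification of the derivative at each $u \in [0,1)$ from Theorem \ref{thm:derivative_process} (and dominated convergence, since each $\eta^i_j \in [0,1]$ bounds the difference quotients uniformly) identifies the a.e.\ derivative as $\sum_i (\tilde x_i - x_i)\eta^i(\gamma(u), t)$, completing the argument.
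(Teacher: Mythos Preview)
Your proposal is correct and follows essentially the same approach as the paper's proof: both use the Lipschitz property of the Skorokhod map from \cite{harrisonreiman} to obtain absolute continuity of $u \mapsto X(\gamma(u), t)$ on $[0,1]$, apply the fundamental theorem of calculus with the derivative identified via Theorem \ref{thm:derivative_process} (valid since $\gamma(u) > 0$ for $u \in [0,1)$), then sum over $j$ using \eqref{eqn:rwre_deriv} and bound $\mathbb{P}_{\Theta(\gamma(u)), i}(W(t) \in \{1,\dots,d\})$ by $\mathbb{P}_{\Theta(\gamma(u)), i}(\tau^*_0 > t)$.
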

\begin{proof}
For each $i = 1, \ldots, d$ and $t \ge 0$ define the function $f_{i, t}: [0, 1] \mapsto [0, \infty)$ as $f_{i, t}(u) = X_i(\gamma(u), t)$. As shown in the proof of \cite[Theorem 1]{harrisonreiman}, $x \mapsto X_i(x, t)$ is Lipschitz. Thus $f_{i, t}$ is absolutely continuous on $[0, 1]$ and we have for $t \ge 0$:
\begin{align*}
 \|X(\tilde{x}, t) - X(x, t)\|_1 &\le \sum_{j=1}^d\sum_{i=1}^d|(\tilde{x} - x)_i| \int_{[0, 1)}\mathbb{P}_{\Theta(\gamma(u)), i}(W(t)=j) \> du\\
 &= \sum_{i=1}^d|(\tilde{x} - x)_i| \int_{[0, 1)}\mathbb{P}_{\Theta(\gamma(u)), i}(W(t) \in \{1,\dots,d\}) \> du\\
 &\le \sum_{i = 1}^d |(\tilde{x} - x)_i| \int_{[0, 1)}\mathbb{P}_{\Theta(\gamma(u)), i}(\tau^*_0 > t) \> du.
\end{align*}
The first step above follows from absolute continuity and Theorem \ref{thm:derivative_process} for $\gamma(u) > 0$ for $u \in [0, 1)$. The second step follows by an interchange of summation.
\end{proof}

\section{Proofs: Dimension-free local convergence rates for RBM}\label{sec:dfproofs}
\subsection{Boundary-hitting times}
Before proceeding to the proofs, we define boundary hitting times for a solution $X$ to \eqref{eqn:rbm}, which we use throughout. For any $1 \le d' \le d$, we define a sequence of times between which $X$ hits $d'$ faces of $\Reals_+^d$ corresponding to $X_i = 0$ for $i = 1, \ldots, d'$. Set $\eta_{d'}^0(x) = 0$ and define inductively for $k \ge 1$ 
\begin{equation} \label{eqn:boundary_hit1}
    \xi_i^k(x) = \inf\{t > \eta_{d'}^{k-1}(x) + 1 \> | \> X_i(x, t) = 0\}, \quad \quad \eta_{d'}^k(x) = \max\{\xi_i^k(x)\> | \> i = 1,\ldots, d' \}
\end{equation} 
where we suppress the $d'$ dependence of $\xi_i^k$s for convenience. Also define
\begin{equation}\label{eqn:boundary_hit2}
    \N_{d'}(x, t) = \max \{k \> | \> \eta_{d'}^k(x) \le t\}.
\end{equation}
All the stopping times defined above are finite almost surely, which follows from the positive recurrence criterion $R^{-1}\mu<0$. It can also be deduced from Lemma \ref{lemma:boundary_hits} below.
\subsection{Fundamental properties of RBM}
The next two theorems record fundamental results related to this work from, respectively, \cite{kella} Theorem 1.1, and \cite{sarantsev} Theorem 3.1, Corollaries 3.5 and 3.6.

\begin{theorem}[Monotonicity under synchronous coupling]\label{thm:rbm_monotonicity}
For $X$ a solution to \eqref{eqn:rbm} and $x, \tilde{x} \in \Reals^d_+$ such that $x \ge \tilde{x}$, the following hold:
\begin{itemize}
    \item[(i)] $X(x, t) \ge X(\tilde{x}, t)$ for all $t > 0$.
    \item[(ii)]$t \mapsto L(x, t) - L(\tilde{x}, t)$ is non-positive, non-increasing and bounded below by  $- \rinv(x - \tilde{x})$.
    \item[(iii)] $t \mapsto \rinv \left(X(x, t) - X(\tilde{x}, t)\right) = \rinv(x - \tilde{x}) + L(x, t) - L(\tilde{x}, t)$ is non-negative and non-increasing.
\end{itemize}
\end{theorem}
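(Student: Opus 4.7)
The plan is to reduce all three claims to analyzing the difference processes $\Delta X(t) := X(x,t) - X(\tilde{x},t)$ and $\Delta L(t) := L(x,t) - L(\tilde{x},t)$, then attack (i) via monotonicity of the Skorokhod reflection map and (iii) via a careful pathwise analysis of how local times grow under domination. The starting point is the identity obtained by subtracting the two defining relations \eqref{eqn:rbm} (the Brownian term cancels under synchronous coupling):
\begin{equation*}
\Delta X(t) = (x-\tilde{x}) + R\,\Delta L(t).
\end{equation*}
Since $R = I - P^T$ with $P$ substochastic and transient, $R^{-1} = \sum_{n \ge 0} (P^T)^n$ exists with nonnegative entries. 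Premultiplying by $R^{-1}$ immediately yields the equality claimed in (iii):
\begin{equation*}
R^{-1}\Delta X(t) = R^{-1}(x - \tilde{x}) + \Delta L(t).
\end{equation*}

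For (i), I would invoke monotonicity of the oblique Skorokhod map on $\mathbb{R}^d_+$ with reflection matrix $R$. Writing $Y(t) = x + \mu t + DB(t)$ and $\tilde{Y}(t) = \tilde{x} + \mu t + DB(t)$, we have $Y(t) \ge \tilde{Y}(t)$ pointwise, and $X(x,\cdot) = \Gamma(Y)$, $X(\tilde{x},\cdot) = \Gamma(\tilde{Y})$ for the Skorokhod mapping $\Gamma$. Because $P$ is substochastic and transient, $\Gamma$ can be realized as the monotone limit of the iteration $Z^{(n+1)} = \psi + R \sup_{s \le \cdot} (Z^{(n)}_- / R_{\mathrm{diag}})$ started from $Z^{(0)} = \psi$ (or any equivalent componentwise increasing fixed-point scheme), so $\psi_1 \ge \psi_2$ forces $\Gamma(\psi_1) \ge \Gamma(\psi_2)$. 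Applied to $Y \ge \tilde{Y}$ this gives $\Delta X(t) \ge 0$. Combined with $R^{-1} \ge 0$ (entrywise), this yields the nonnegativity of $R^{-1}\Delta X(t)$ in (iii), and rearranging the identity above produces the lower bound $\Delta L(t) \ge -R^{-1}(x-\tilde{x})$ stated in (ii).

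The main remaining task, and the one I expect to be the principal obstacle, is to prove that $\Delta L$ is componentwise non-increasing; this simultaneously gives the non-increasing assertion in (iii) (via the identity) and, since $\Delta L(0) = 0$, the non-positivity in (ii). The argument splits according to which of the processes is at the $i$-th face of the orthant at time $t$. By the minimality condition \eqref{loctim}, $dL_i(x,\cdot)$ is supported on $\{X_i(x,\cdot) = 0\}$, and by (i) any such $t$ forces $X_i(\tilde{x},t) = 0$ as well. On the other hand, at times $t$ with $X_i(\tilde{x},t) = 0$ but $X_i(x,t) > 0$, only $L_i(\tilde{x},\cdot)$ can increase, so $\Delta L_i$ decreases there. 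The delicate case is when both components simultaneously vanish: at such times $\Delta X_i(t) = 0$, and the instantaneous relation $d\Delta X_i = d\Delta L_i - (P^T\, d\Delta L)_i$ must be balanced against the constraint $\Delta X \ge 0$. The plan is to exploit the substochastic/transient structure of $P$ iteratively: write $\Delta L = R^{-1}\Delta X - R^{-1}(x-\tilde{x}) = \sum_{n \ge 0} (P^T)^n \Delta X - R^{-1}(x-\tilde{x})$, and show via induction on the support of $(P^T)^n$ (tracing the substochastic chain associated with $P$, which exits to the cemetery in finite expected time by transience) that each increment $\Delta L(t+h) - \Delta L(t)$ is coordinatewise nonpositive. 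This componentwise analysis on the set of simultaneous-boundary times is the key hurdle; it is precisely the content of Kella's Theorem 1.1, and once established closes (ii) and (iii).
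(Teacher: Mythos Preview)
The paper does not prove this theorem; it records it as a known result from \cite{kella}, Theorem 1.1. Your derivation of the identity in (iii), the monotonicity (i) via the Skorokhod map, and the lower bound in (ii) from nonnegativity of $R^{-1}\Delta X$ are all correct and standard.

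The gap is in your argument that $\Delta L$ is componentwise non-increasing. You propose to write $\Delta L(t) = R^{-1}\Delta X(t) - R^{-1}(x-\tilde{x})$ and run an ``induction on the support of $(P^T)^n$'', but this formula gives $\Delta L(t+h) - \Delta L(t) = R^{-1}\bigl(\Delta X(t+h) - \Delta X(t)\bigr)$, and since $R^{-1}$ has nonnegative entries, nonpositivity would follow from $\Delta X$ being non-increasing. That is \emph{false} in general: when a coordinate of $X(\tilde{x},\cdot)$ hits zero while the corresponding coordinate of $X(x,\cdot)$ is strictly positive, the oblique push from $L(\tilde{x},\cdot)$ can decrease neighboring coordinates of $X(\tilde{x},\cdot)$ and thereby \emph{increase} $\Delta X_j$ for $j$ with $P_{ji}>0$. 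So the series representation does not close the argument, and the ``delicate case'' you flag is not resolved by it.

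A clean route is via the minimality characterization of the Harrison--Reiman map: for this class of $R$, the local time $L$ is the componentwise-minimal nondecreasing process with $L(0)=0$ such that $Y + RL \ge 0$. Fix $s<t$. By (i), $X(x,s) \ge X(\tilde{x},s)$, so the restarted free process $u \mapsto X(x,s) + \mu u + D(B(s+u)-B(s))$ dominates the one from $X(\tilde{x},s)$. Then $\hat L(u) := L(\tilde{x},s+u) - L(\tilde{x},s)$ keeps the \emph{larger} free process nonnegative (add the nonnegative gap $X(x,s)-X(\tilde{x},s)$ to $X(\tilde{x},s+\cdot)\ge 0$), so by minimality $L(x,s+u) - L(x,s) \le \hat L(u)$, i.e.\ $\Delta L(t) \le \Delta L(s)$. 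This closes (ii) and (iii) without any boundary case analysis.
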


\begin{theorem}[Stochastic domination of projected system]\label{thm:domination}
Suppose $X$ is a solution to \eqref{eqn:rbm} with parameters $(\Sigma, \mu, R)$ and corresponding local times $L$. For $x \in \Reals^d_+$ and an integer $1 \le k \le d$, define the process $Z(x|_k,t) := x|_k + \muk{k} t + (DB(t)) |_k, \ t \ge 0$, which uses the same driving Brownian motion $B$ as $X$. 
Define $\bar{X}$ to be the $\Reals^k_+$-valued process obtained as the solution to
$$
\bar{X}(x|_k, t) = Z(x|_k,t) + R|_k  \bar{L}(x|_k,t), \ t \ge 0,
$$
where $ \bar{L}(x|_k,\cdot)$ is the local time which constrains $\bar{X}$ to $\Reals^k_+$. Then $$ 
X|_k(x, t) \le \bar{X}(x|_k, t) \quad t \ge 0, \quad \quad L|_k(x, t) - L|_k(x, s) \ge \bar{L}(x|_k, t) - \bar{L}(x|_k, s) \quad 0 \le s \le t.
$$
\end{theorem}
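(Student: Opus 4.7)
The plan is to interpret the projected process $X|_k$ itself as solving an oblique Skorokhod problem in $\mathbb{R}^k_+$ with reflection matrix $R|_k$ and a modified driving path, and then compare it to the Skorokhod problem defining $\bar{X}$.

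First I would decompose the projected RBM. Since $R = I - P^T$ with $P$ non-negative and substochastic, we have $R_{ij} \le 0$ whenever $i \le k < j$. Projecting \eqref{eqn:rbm} to the first $k$ coordinates gives
$$X|_k(x, t) = Z(x|_k, t) + R|_k L|_k(x, t) - A(x, t),$$
where the $\mathbb{R}^k$-valued process defined by $A_i(x, t) := -\sum_{j=k+1}^d R_{ij} L_j(x, t)$ for $i = 1, \ldots, k$ is coordinate-wise non-negative and non-decreasing, because the $L_j$ are non-decreasing and $R_{ij} \le 0$ in this range. Setting $\tilde{\phi}(t) := Z(x|_k, t) - A(x, t)$, the pair $(X|_k, L|_k)$ satisfies the oblique Skorokhod problem in $\mathbb{R}^k_+$ with driver $\tilde{\phi}$ and reflection $R|_k$: $X|_k \ge 0$, $L|_k$ is continuous non-decreasing with $L_i$ only increasing when $X_i = 0$, and $X|_k = \tilde{\phi} + R|_k L|_k$. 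Since $P|_k$ inherits substochasticity from $P$ and its spectral radius is bounded above by that of $P$ (in particular, strictly less than $1$), $R|_k$ is of the Harrison-Reiman form and the corresponding Skorokhod problem is well-posed.

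By definition, $\bar{X}$ is the Skorokhod regulation of $Z$ with the same reflection matrix $R|_k$. Since $A \ge 0$ we have $\tilde{\phi} \le Z$ pointwise, and both drivers agree at $t = 0$. The first claim $X|_k(x, t) \le \bar{X}(x|_k, t)$ then follows from the classical monotonicity of the oblique Skorokhod regulator in its input path: if $\phi_1 \le \phi_2$ are drivers starting at the same point, then $\Gamma_{R|_k}(\phi_1) \le \Gamma_{R|_k}(\phi_2)$. This is a consequence of the order-preserving fixed-point iteration used in the Harrison-Reiman construction, and is in the same spirit as Theorem \ref{thm:rbm_monotonicity}(i).

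For the local time inequality, the plan is to subtract the two Skorokhod equations to obtain
$$R|_k\bigl[L|_k(x, t) - \bar{L}(x|_k, t)\bigr] = [X|_k(x, t) - \bar{X}(x|_k, t)] + A(x, t),$$
and to show that $N(t) := L|_k(x, t) - \bar{L}(x|_k, t)$ is coordinate-wise non-decreasing in $t$. Whenever $\bar{L}_i$ is increasing at some time $t$, we have $\bar{X}_i(t) = 0$ and hence $X_i(t) = 0$ by the first part; the Skorokhod complementarity for $X|_k$ then forces $L_i$ to grow at least as fast as $\bar{L}_i$ at that instant, because $X_i$ must be kept non-negative despite the additional downward push $dA_i \ge 0$. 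I expect this step to be the main technical obstacle: rigorously establishing $dN_i \ge 0$ as a Stieltjes measure requires simultaneously exploiting the complementarity conditions of both Skorokhod problems together with the non-decreasing nature of $A$, in the spirit of the argument used for Theorem \ref{thm:rbm_monotonicity}(iii). Integrating the resulting inequality over $[s, t]$ yields the stated local time comparison.
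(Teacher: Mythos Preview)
The paper does not prove this theorem; it records it as a consequence of the comparison theorems in \cite{sarantsev} (Theorem 3.1 and Corollaries 3.5--3.6). Your decomposition is exactly the right idea and matches how those results are applied: project, absorb the cross-block reflection term into the driver as the non-decreasing process $A$, and compare two Skorokhod problems in $\mathbb{R}^k_+$ with the same reflection matrix $R|_k$.

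There is one inaccuracy worth fixing. The monotonicity principle you invoke, ``$\phi_1 \le \phi_2$ with $\phi_1(0)=\phi_2(0)$ implies $\Gamma_{R|_k}(\phi_1)\le\Gamma_{R|_k}(\phi_2)$'', is false as stated, already in dimension one. Take $\phi_2\equiv 1$ and let $\phi_1$ be the piecewise-linear tent with $\phi_1(0)=1$, $\phi_1(1)=-1$, $\phi_1(2)=1$; then $\Gamma(\phi_1)(2)=2>1=\Gamma(\phi_2)(2)$. The correct hypothesis for the comparison is that $\phi_2-\phi_1$ be \emph{non-decreasing}, which you do have in hand since $A$ is non-decreasing (you noted this but then dropped it from the monotonicity statement). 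Under that hypothesis the comparison theorem delivers both conclusions at once: $\Gamma(\tilde\phi)\le\Gamma(Z)$ and $t\mapsto L|_k(x,t)-\bar L(x|_k,t)$ non-decreasing. So the ``main technical obstacle'' you anticipate for the local-time inequality is in fact handled by the very same comparison lemma once it is stated with the right hypothesis; a separate pathwise complementarity argument is not needed.
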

\subsection{Proofs}
The following lemma provides a crucial local contraction estimate. It shows that for any $x \in \mathbb{R}^d_+$, the weighted distance between the coupled processes $X(x,\cdot)$ and $X(0,\cdot)$ as measured by $u(x, \cdot)$ in \eqref{eqn:weighted_norm} decreases by a constant factor if a subset of coordinates of $X(x,\cdot)$ (whose cardinality is determined by the initial distance) hit zero.
\begin{lemma}[Local contraction] \label{lemma:contraction}
Suppose I, II of Assumption \ref{assump:main} hold for $X$, an $\RBM(\Sigma, \mu, R)$. Fix an initial condition $X(x, 0) = x \ge 0$. With $\alpha$ as in Assumption \ref{assump:main}, fix $\beta \in (\alpha, 1)$ and $\delta \in (\beta, 1)$. Recall the weighted supremum norm $\bsupnorm{x}{\delta} = \max_{1 \le i \le d} \delta^i \> x_i$, and $u(x, \cdot)$ from \eqref{eqn:weighted_norm}. 

Fix $d' \in \{1, \ldots, d\}$. Recall the definition of $\eta_{d'}^1 = \eta_{d'}^1(x)$ from \eqref{eqn:boundary_hit1}.

There exist $C' > 0$ and $\lambda \in (1/2, 1)$ not dependent on $d$, $d'$ or $x$ such that, 
\begin{itemize}
    \item[(i)] if $1 \le d' \le d-1$,
    \begin{equation}\label{eqn:contraction_result}
    u(x, 0) \ge C'\bsupnorm{x}{\delta} \> (\beta/\delta)^{d'+1} \quad \Longrightarrow \quad u(x, \eta_{d'}^1) \le \lambda u(x, 0).
    \end{equation}
    \item[(ii)] if $d' = d$,
    \begin{equation}\label{eqn:contraction_result2}
    u(x, \eta_{d}^1) \le \lambda u(x, 0).
    \end{equation}
\end{itemize}
$C', \lambda$ may be chosen explicitly as functions of $\beta, \delta$ and the constants $\alpha, C, M$ from Assumption \ref{assump:main}.
\end{lemma}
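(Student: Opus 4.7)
My plan is to combine the monotonicity of synchronously coupled RBMs with an algebraic identity at boundary-hitting times to extract a pathwise lower bound on the difference of local times, which in turn drives the decay of $u(x,\cdot)$. The starting point is to set $Y(t):=\rinv(X(x,t)-X(0,t))$ and $D(t):=L(0,t)-L(x,t)$. By Theorem \ref{thm:rbm_monotonicity}, $Y(t)=\rinv x-D(t)$ is coordinate-wise non-negative and non-increasing, so $D$ is non-negative, non-decreasing, with $D(0)=0$. Consequently
$$u(x,0)-u(x,\eta_{d'}^1)=\sum_{i=1}^d\beta^i D_i(\eta_{d'}^1),$$
so the task reduces to lower-bounding $D_i(\eta_{d'}^1)$ for $i\le d'$.

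For each such $i$, the time $s_i:=\xi_i^1\le\eta_{d'}^1$ satisfies $X_i(x,s_i)=0$, which by monotonicity forces $X_i(0,s_i)=0$, and hence $(RY(s_i))_i=0$. Since $R_{ii}=1-P_{ii}$ and $R_{ij}=-P_{ji}$ for $j\ne i$, substituting $Y=\rinv x-D$ and using the identity $(1-P_{ii})(\rinv x)_i-\sum_{j\ne i}P_{ji}(\rinv x)_j=x_i$ (direct from $R\rinv x=x$) gives
$$(1-P_{ii})D_i(s_i)=x_i+\sum_{j\ne i}P_{ji}D_j(s_i)\ge x_i.$$
Since $P_{ii}\in[0,1)$ and $D$ is non-decreasing, $D_i(\eta_{d'}^1)\ge x_i$ for every $i\le d'$, yielding the fundamental decrease estimate $u(x,0)-u(x,\eta_{d'}^1)\ge\bnorm{x|_{d'}}$.

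Next I would derive the matching upper bound $u(x,0)\le K\beta\bnorm{x}$, where $K:=\tfrac{C}{\beta-\alpha}+\tfrac{M}{1-\beta}$ is $d$-independent. Writing $u(x,0)=\sum_k x_k\sum_i\beta^i(\rinv)_{ik}$, splitting the inner sum at $i=k$, and combining I of Assumption \ref{assump:main} for $i\le k$ with II of Assumption \ref{assump:main} for $i>k$ produces the geometric estimate $\sum_i\beta^i(\rinv)_{ik}\le K\beta^{k+1}$. In Case (ii) this is immediate: $\bnorm{x|_d}=\bnorm{x}\ge u(x,0)/(K\beta)$, giving $u(x,\eta_d^1)\le(1-1/(K\beta))u(x,0)$. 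In Case (i), I decompose $\bnorm{x}=\bnorm{x|_{d'}}+\sum_{k>d'}\beta^k x_k$ and bound the tail via $x_k\le\bsupnorm{x}{\delta}\delta^{-k}$ by $\bsupnorm{x}{\delta}(\beta/\delta)^{d'+1}/(1-\beta/\delta)$. The hypothesis $u(x,0)\ge C'\bsupnorm{x}{\delta}(\beta/\delta)^{d'+1}$ lets me replace this tail by $u(x,0)/[(1-\beta/\delta)C']$; choosing $C':=2K\beta/(1-\beta/\delta)$ absorbs half of $u(x,0)$ and leaves $\bnorm{x|_{d'}}\ge u(x,0)/(2K\beta)$, so $u(x,\eta_{d'}^1)\le(1-1/(2K\beta))u(x,0)$. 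Setting $\lambda:=(3/4)\vee(1-1/(2K\beta))\in(1/2,1)$ then covers both cases.

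The main obstacle is guaranteeing that $C'$ and $\lambda$ are independent of $d$, $d'$, and $x$, which reduces entirely to the $d$-uniform geometric-sum bound $\sum_i\beta^i(\rinv)_{ik}\le K\beta^{k+1}$: Assumption I supplies geometric decay above the diagonal, while Assumption II keeps the lower-triangular entries summable against the weight $\beta^i<1$, producing a constant $K$ depending only on $\alpha,\beta,C,M$. The boundary identity of the first step is algebraically clean once synchronous monotonicity is used to deduce $X_i(0,s_i)=0$; the rest is bookkeeping of finite geometric sums.
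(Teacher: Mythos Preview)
Your proof is correct and follows essentially the same route as the paper: both derive the key decrease estimate $u(x,0)-u(x,\eta_{d'}^1)\ge\sum_{i\le d'}\beta^i x_i$ from the algebraic identity at boundary-hitting times combined with monotonicity of $D(\cdot)$ (equivalently, of $-\Delta L(\cdot)$), and both bound $u(x,0)\le K'\bnorm{x}$ via Assumptions I and II. The one minor simplification in your argument is that you bound the tail $\sum_{k>d'}\beta^k x_k$ directly via $x_k\le\bsupnorm{x}{\delta}\delta^{-k}$, whereas the paper first passes to the larger quantity $\sum_{i>d'}\beta^i Y_i(0)$ and then invokes Assumptions I, II again to control it; your route avoids that extra use of the assumptions but otherwise the two proofs coincide.
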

\begin{proof}
Define the processes
\begin{eqnarray} \label{eqn:deltax}
\Delta X(t) &=& X(x, t) - X(0, t) \nonumber \\
\Delta L(t) &=& L(x, t) - L(0, t) \nonumber\\
Y(t) &=& \rinv \Delta X(t) = \rinv x + \Delta L(t)
\end{eqnarray}
From Theorem \ref{thm:rbm_monotonicity} we know that for all for all $t \ge 0$, $\Delta X(t) \ge 0$, $t \mapsto \Delta L(t)$ is non-positive, non-increasing and $t \mapsto Y(t)$ is non-negative, non-increasing. By definition, then, $t \mapsto u(x, t)$ is non-negative and non-increasing. We aim to show that $u$ indeed contracts by a fixed proportion $\lambda$ of its initial value at time $\eta^1_{d'}$.

The crucial fact is that if $X_i(x, \> \cdot)$ has hit zero before a time $t$, then $\Delta L_i(s) \le - x_i$ for all $s \ge t$. Indeed, setting $t_0>0$ to be the first hitting time of $X_i(x, \> \cdot)$ at $0$ and assuming $t_0 < t$,
\begin{align}\label{eqn:contraction1}
    0 = \Delta X_i(t_0) &= x_i + \left(R\Delta L(t_0)\right)_i \nonumber \\
    &= x_i + \Delta L_i(t_0) - \left(P^T\Delta L(t_0)\right)_i
    \ge x_i + \Delta L_i(t) \ge x_i + \Delta L_i(s),
\end{align}
for all $s \ge t$, where the first equality follows from $R= I - P^T$ and the last two inequalities follow from Theorem \ref{thm:rbm_monotonicity} (ii) and the non-negativity of $P$. By definition, at time $\eta^1_{d'} = \eta^1_{d'}(x)$ the first $d'$ coordinates of $X(x, \cdot)$ have already hit zero. \eqref{eqn:contraction1} then implies
\begin{align}\label{eqn:contraction2}
    u(x, \eta^1_{d'}) &= \sum_{i=1}^d \beta^i Y_i(\eta^1_{d'}) = u(x, 0) + \sum_{i=1}^d \beta^i \Delta L_i(\eta^1_{d'})\notag\\
    &\le u(x, 0) - \sum_{i=1}^{d'} \beta^i x_i + \ind{d' < d}\> \sum_{i=d'+1}^d \beta^i \Delta L_i(\eta^1_{d'})
    \le u(x, 0) - \sum_{i=1}^{d'} \beta^i x_i.
\end{align}
The last inequality follows once again from Theorem \ref{thm:rbm_monotonicity} (ii). To achieve the result \eqref{eqn:contraction_result}, we first bound $\sum_{i=d' + 1}^d \beta^i Y_i(0)$. In the following, the first inequality is a consequence of the definition of $\bsupnorm{x}{\delta}$ and the second inequality follows from I, II of Assumption \ref{assump:main}. Remaining statements follow from the fact that $\alpha < \beta < \delta < 1$. For $d' < d$,
\begin{align}\label{eqn:contraction3}
    \sum_{i=d' + 1}^d \beta^i Y_i(0) &= \sum_{i = d' + 1}^d \beta^i \sum_{j=1}^d (\rinv)_{ij }x_j \le \bsupnorm{x}{\delta} \> \sum_{i = d' + 1}^d \beta^i \sum_{j=1}^d (\rinv)_{ij }\delta^{-j}\notag\\
    &\le \bsupnorm{x}{\delta} \> \sum_{i = d' + 1}^d \beta^i \left( M \sum_{j=1}^i \delta^{-j} + C \sum_{j={i+1}}^d \alpha^{j-i} \delta^{-j}\right)\notag\\
    &\le \bsupnorm{x}{\delta} \> \sum_{i = d' + 1}^d (\beta/\delta)^i \left(  M \sum_{j=0}^{i-1} \delta^j + C\sum_{j={i+1}}^\infty (\alpha/\delta)^{j-i} \right)\notag\\
    &\le  \frac{\bsupnorm{x}{\delta} M}{1 - \delta} \sum_{i = d' + 1}^d (\beta/\delta)^i \nonumber \\
    &+  \frac{\bsupnorm{x}{\delta}\> C(\alpha/\delta)}{1 - \alpha/\delta}\sum_{i = d' + 1}^d (\beta/\delta)^i
    \le \tilde{C} \bsupnorm{x}{\delta} \> (\beta/\delta)^{d'+1},
\end{align}
with $\tilde{C} = \frac{M}{(1-\delta)(1-\beta/\delta)} + \frac{C(\alpha/\delta)}{(1 - \alpha/\delta)(1-\beta/\delta)}$, which by Assumption \ref{assump:main} does not depend on $d'$, $d$ or $x$.

Now recall that since $P$ is transient and $R = I - P^T$ we have $\rinv = \sum_{n=0}^\infty (P^T)^n$, which implies $Y(0) = \rinv x \ge x$. Using this and \eqref{eqn:contraction3}, we have for $1 \le d' \le d-1 $
\begin{equation}\label{eqn:contraction4}
    \sum_{i=1}^{d'} \beta^i x_i = \sum_{i=1}^{d} \beta^i x_i - \sum_{i=d'+1}^{d} \beta^i x_i \ge \sum_{i=1}^{d} \beta^i x_i - \sum_{i=d'+1}^{d} \beta^i Y_i(0) \ge \sum_{i=1}^{d}\beta^i x_i - \tilde{C}\bsupnorm{x}{\delta}\>(\beta/\delta)^{d'+1}.
\end{equation}
Furthermore, I, II of Assumption \ref{assump:main} and $1 > \beta > \alpha$ give 
\begin{align}\label{eqn:contraction5}
    u(x, 0) &= \sum_{i=1}^d \beta^i Y_i(0) = \sum_{j=1}^{d} \beta^j x_j \sum_{i = 1}^d (\rinv)_{ij} \beta^{i-j} \nonumber \\
    &\le \sum_{j=1}^{d} \beta^j x_j \left(C \sum_{i=1}^j (\alpha/\beta)^{j-i} + M \sum_{i=j+1}^d \beta^{i-j} \right) \nonumber \\
    &\le \left(\frac{C}{1 - \alpha/\beta} +\frac{M\beta}{1-\beta} \right)  \sum_{i=1}^{d}\beta^j x_j \le \tilde{C}' \sum_{i=1}^{d}\beta^j x_j,
\end{align}
where we have set $\tilde{C}' = 1\vee \left[C/(1 - \alpha/\beta) + M\beta/(1-\beta)\right]$. Combining \eqref{eqn:contraction4} and \eqref{eqn:contraction5},
\begin{equation}\label{eqn:contraction6}
    \sum_{i=1}^{d'} \beta^i x_i \ge \frac{1}{\tilde{C}'} u(x, 0) - \tilde{C}\bsupnorm{x}{\delta}\>(\beta/\delta)^{d'+1}.
\end{equation}
Finally, if $u(x, 0) \ge 2\tilde{C}'\tilde{C}\bsupnorm{x}{\delta}\>(\beta/\delta)^{d'+1}$ then \eqref{eqn:contraction6} gives
\begin{equation}\label{eqn:contraction7}
    \sum_{i=1}^{d'} \beta^i x_i \ge \frac{1}{2\tilde{C}'} u(x, 0).
\end{equation}
The result \eqref{eqn:contraction_result} now follows with $C' = 2\tilde{C}'\tilde{C}$ and $\lambda = 1-1/(2\tilde{C}')$ using \eqref{eqn:contraction7} and \eqref{eqn:contraction2}.
To prove \eqref{eqn:contraction_result2}, we use \eqref{eqn:contraction2} with $d' = d$ and \eqref{eqn:contraction5} as follows
\begin{equation}
    u(x, \eta^1_{d}) \le u(x, 0) - \sum_{i=1}^{d} \beta^i x_i \le \left(1-\frac{1}{\tilde{C}'}\right)u(x, 0) \le \lambda u(x, 0).
\end{equation}
\end{proof}

\begin{cor}\label{cor:contraction1}
Retain the assumptions of Lemma \ref{lemma:contraction} and recall $\beta, \delta$ chosen there. Recall the definition of $\N_{d'}(x, t)$ from \eqref{eqn:boundary_hit2}. Define the stopping times with $C'$ as in \eqref{eqn:contraction_result},
\begin{equation}
    \tau(x, d') := \inf \left\{s > 0 \,\,\,| \,\,\, u(x, s) \le C' \bsupnorm{x}{\delta}\> (\beta/\delta)^{d'+1} \right\}, \quad \quad \text{for }\> x \in \Reals^d_+, \ 1\le d' \le d-1.
\end{equation}
Then for any $q > 0$,
\begin{itemize}
    \item[(i)] if $1 \le d' \le d-1$,
    \begin{equation}\label{eqn:cor1_contraction_result}
    u(x, t) \> \ind{\tau(x, d') > t, \> \N_{d'}(x, t) \ge q} \le \lambda^{\floor{q}} u(x, 0).
    \end{equation}
    \item[(ii)] if $d' = d$,
    \begin{equation}\label{eqn:cor1_contraction_result2}
    u(x, t) \> \ind{\N_{d}(x, t) \ge q} \le \lambda^{\floor{q}} u(x, 0).
    \end{equation}
\end{itemize}
\end{cor}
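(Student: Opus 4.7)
My plan is to iterate Lemma \ref{lemma:contraction} applied to the pair $(X(x,\cdot), X(0,\cdot))$ restarted at each successive hitting epoch $\eta^k_{d'}(x)$ via the strong Markov property. The non-trivial bookkeeping is to check that the hypothesis of Lemma \ref{lemma:contraction}(i) is preserved after each restart, which requires revisiting the proof of the Lemma rather than applying its statement verbatim.

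First consider case (ii), where $d' = d$. Since Lemma \ref{lemma:contraction}(ii) carries no hypothesis on the initial state, the strong Markov property applied at $\eta^k_d(x)$ immediately yields $u(x, \eta^{k+1}_d(x)) \le \lambda \, u(x, \eta^k_d(x))$ for every $k \ge 0$. Now $Y(\cdot) = \rinv \Delta X(\cdot) \ge 0$ is componentwise non-increasing by Theorem \ref{thm:rbm_monotonicity}(iii), so $u(x, \cdot) = \bnorm{Y(\cdot)}$ is non-increasing in $t$. Iterating and using this monotonicity, I obtain \eqref{eqn:cor1_contraction_result2} on $\{\N_d(x,t) \ge q\} \subseteq \{\eta^{\floor{q}}_d(x) \le t\}$.

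For case (i), after restarting at $\eta^k_{d'}(x)$ the new ``initial gap'' is $\Delta X(\eta^k_{d'}(x))$, so a direct application of Lemma \ref{lemma:contraction}(i) would demand $u(x, \eta^k_{d'}(x)) \ge C' \bsupnorm{\Delta X(\eta^k_{d'}(x))}{\delta}(\beta/\delta)^{d'+1}$. Unfortunately $\bsupnorm{\Delta X(\cdot)}{\delta}$ is not monotone in time, so this cannot be compared directly to the threshold $C'\bsupnorm{x}{\delta}(\beta/\delta)^{d'+1}$ appearing in the definition of $\tau(x, d')$. The key observation is that in the proof of Lemma \ref{lemma:contraction}, $\bsupnorm{x}{\delta}$ enters only through the tail estimate \eqref{eqn:contraction3}, namely $\sum_{i=d'+1}^d \beta^i Y_i(0) \le \tilde{C}\bsupnorm{x}{\delta}(\beta/\delta)^{d'+1}$. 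Since $Y$ is componentwise non-increasing, the same bound with the same right-hand side $\tilde{C}\bsupnorm{x}{\delta}(\beta/\delta)^{d'+1}$ remains valid at every later time $\eta^k_{d'}(x)$.

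With this preserved tail control, re-running the remaining steps of the proof of Lemma \ref{lemma:contraction} (the analogues of \eqref{eqn:contraction4}--\eqref{eqn:contraction7}) on the process restarted at $\eta^k_{d'}(x)$ shows that the stepwise contraction $u(x, \eta^{k+1}_{d'}(x)) \le \lambda \, u(x, \eta^k_{d'}(x))$ holds as soon as $u(x, \eta^k_{d'}(x)) \ge C'\bsupnorm{x}{\delta}(\beta/\delta)^{d'+1}$, i.e.\ whenever $\tau(x,d') > \eta^k_{d'}(x)$. On the event $\{\tau(x,d') > t, \N_{d'}(x,t) \ge q\}$ this condition holds for all $k = 0, 1, \ldots, \floor{q}-1$, so iterating gives $u(x, \eta^{\floor{q}}_{d'}(x)) \le \lambda^{\floor{q}} u(x,0)$; monotonicity of $u(x,\cdot)$ in $t$ then yields \eqref{eqn:cor1_contraction_result}. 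The main obstacle is precisely this iteration in case (i): a naive reapplication of Lemma \ref{lemma:contraction} after each Markov restart fails because $\bsupnorm{\cdot}{\delta}$ of the running gap need not decrease in time, so one must open up the proof of the Lemma and exploit the monotonicity of $Y$ rather than of $\bsupnorm{\cdot}{\delta}$.
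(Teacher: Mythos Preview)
Your proposal is correct and follows essentially the same route as the paper: both arguments reduce to establishing the stepwise contraction $u(x,\eta^{k+1}_{d'}) \le \lambda\, u(x,\eta^k_{d'})$ by re-running the proof of Lemma~\ref{lemma:contraction} with the restarted pair, and the crucial observation in case (i)---that the tail bound \eqref{eqn:contraction3} persists at later times because $Y_i(\eta^k_{d'}) \le Y_i(0)$ by Theorem~\ref{thm:rbm_monotonicity}(iii), even though $\bsupnorm{\Delta X(\cdot)}{\delta}$ itself is not monotone---is exactly the point the paper makes. One small remark: in case (ii) your appeal to the ``strong Markov property'' is slightly loose since the restarted lower process starts from $X(0,\eta^k_d(x))$ rather than $0$, so Lemma~\ref{lemma:contraction}(ii) does not apply verbatim; but, as you already recognize in case (i), the proof of the Lemma depends only on $\Delta X \ge 0$ and the monotonicity of $Y$, so it carries over without change.
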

\begin{proof}
First, by Theorem \ref{thm:rbm_monotonicity} (iii) and the definition \eqref{eqn:weighted_norm} of $u(x, t)$ we have $u(x, t) \le u(x, 0)$ for all $t > 0$. Therefore, it suffices to show for each $k \ge 1$
\begin{eqnarray}\label{eqn:cor1_contraction1}
    u(x, \eta_{d'}^k) \ge C'(\beta/\delta)^{d'+1} \quad \Longrightarrow \quad && u(x, \eta_{d'}^{k+1}) \le \lambda u(x, \eta_{d'}^k), \quad \quad \text{if} \>\> 1 \le d' \le d-1, \nonumber \\
    \text{and } \quad && u(x, \eta_{d}^{k+1}) \le \lambda u(x, \eta_{d}^k).
\end{eqnarray}
To do so, we note that the argument proving Lemma \ref{lemma:contraction} remains valid if we replace $u(x, \eta_{d'}^1)$ with $u(x, \eta_{d'}^{k+1})$, $u(x, 0)$ with $u(x, \eta_{d'}^k)$ and $\Delta X(0) = x$ with $\Delta X(\eta_{d'}^k)$ throughout---so long as \eqref{eqn:contraction3} is replaced by $\sum_{i=d' + 1}^d \beta^i Y_i(\eta_{d'}^k) \le \sum_{i=d' + 1}^d \beta^i Y_i(0) \le \tilde{C} \bsupnorm{x}{\delta} \> (\beta/\delta)^{d'+1}$
in the case where $1\le d' \le d-1$.
This follows directly from \eqref{eqn:contraction3} and Theorem \ref{thm:rbm_monotonicity} (iii) which gives $Y_i(\eta_{d'}^k) \le Y_i(0)$ for $i = 1, \ldots, d$.
\end{proof}

In the following lemma, we obtain estimates on tail probabilities for $\N_{d'}(x, t)$, defined in \eqref{eqn:boundary_hit2}, using results from \cite{banerjeebudhiraja} and the stochastic domination recorded in Theorem \ref{thm:domination}. Recall $k_0$ from Assumption \ref{assump:main}, which by definition was such that $d \ge k_0$.
\begin{lemma}[Boundary-hitting estimates]\label{lemma:boundary_hits}
Fix $d' \in \{k_0, \ldots, d\} $. Suppose $\blow^{(d')}>0$ and IV of Assumption \ref{assump:main} holds, and recall the definition of $\ak{d'}$ from \eqref{pardefcon}. Define the $d'$-dependent quantities
\begin{eqnarray}\label{eqn:hitting_pars}
\Tk{d'} = 1 + \left(\ak{d'}\right)^2\log(2d'), \ \
\Lambdak{d'} = \left(\ak{d'}\right)^{-2}. 
\end{eqnarray}
There exist positive constants $\delta', C''$ and $A_0 \ge 1$ not dependent on $d', d,\mu,R,\Sigma$, such that for any $x \in \Reals^d_+$, $A \ge A_0$ and $t \ge 4\Tk{d'}/\delta'$,
\begin{align}\label{eqn:boundary_hits_tail}
    \Prob{\N_{d'}(x, t) <  \delta' t/(4\Tk{d'})} &\le \exp\left(-t\frac{\delta' C''}{\Tk{d'}} \right) \notag \\
    &\qquad + \exp\left(-t  \frac{C''\Lambdak{d'}}{A}\right) \left\{1 + \exp \left(\frac{\supnorm{x|_{d'}}}{A\sigmalow \ak{d'}} \right) \right\}.
\end{align}
\end{lemma}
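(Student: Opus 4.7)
The plan is to reduce the problem to bounding boundary-hitting counts for the $d'$-dimensional dominating RBM $\bar X(x|_{d'},\cdot)$ supplied by Theorem~\ref{thm:domination}, and then invoke the excursion-theoretic tail estimates developed in \cite{banerjeebudhiraja} for a single RBM whose dimension matches the number of coordinates being tracked.

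First, I will set up the dominating process. Let $\bar X(x|_{d'},\cdot)$ be the $\Reals^{d'}_+$-valued $\RBM(\Sigma|_{d'},\mu|_{d'},R|_{d'})$ coupled to the same Brownian motion $B$ as $X$, as in Theorem~\ref{thm:domination}. Define the analogues $\bar\xi_i^k$, $\bar\eta_{d'}^k$ and $\bar\N_{d'}(x|_{d'},t)$ of the quantities in \eqref{eqn:boundary_hit1}–\eqref{eqn:boundary_hit2} with $X$ replaced by $\bar X$. I will argue inductively that $\eta_{d'}^k(x)\le \bar\eta_{d'}^k(x|_{d'})$ for every $k\ge 0$. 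Indeed, assuming $\eta_{d'}^{k-1}(x)\le \bar\eta_{d'}^{k-1}(x|_{d'})$, the inclusion $\{t>\bar\eta_{d'}^{k-1}+1\}\subset\{t>\eta_{d'}^{k-1}+1\}$ together with the fact that $\bar X_i(t)=0$ forces $X_i(t)=0$ (since $0\le X_i(x,t)\le \bar X_i(x|_{d'},t)$ by Theorem~\ref{thm:domination}) gives $\xi_i^k(x)\le \bar\xi_i^k(x|_{d'})$, and taking maxima over $i\in\{1,\ldots,d'\}$ closes the induction. Consequently $\N_{d'}(x,t)\ge \bar\N_{d'}(x|_{d'},t)$ almost surely, so it suffices to prove the stated bound with $\N_{d'}(x,t)$ replaced by $\bar\N_{d'}(x|_{d'},t)$.

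Next, I will apply the one-system boundary-hitting estimate from \cite{banerjeebudhiraja} to $\bar X$, whose dimension is $d'$, whose renormalized drift is $\bk{d'}$ with $\blowk{d'}>0$ (ensuring positive recurrence of $\bar X$), whose noise amplitudes $\sigma_i$ for $i\le d'$ satisfy IV of Assumption~\ref{assump:main}, and whose characteristic time and rate parameters are precisely $\Tk{d'}$ and $\Lambdak{d'}$ defined in \eqref{eqn:hitting_pars}. The results of \cite{banerjeebudhiraja} yield a lower bound on $\bar\N_{d'}$ of the form: for $t\ge 4\Tk{d'}/\delta'$, on an event of high probability, $\bar X(x|_{d'},\cdot)$ completes at least $\delta' t/(4\Tk{d'})$ full rounds in which all $d'$ coordinates hit zero. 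The complementary event is controlled by combining (a) a tail estimate coming from the regenerative structure and Lyapunov contraction after the process has reached a compact set, producing the term $\exp\{-t\delta' C''/\Tk{d'}\}$, and (b) a hitting-time estimate for the first entrance of $\bar X(x|_{d'},\cdot)$ into that compact set, which by an exponential Markov inequality combined with the sub-Gaussian increments of $\bar X$ (driven by the ellipticity lower bound $\sigmalow$) produces the term $\exp\{-t C''\Lambdak{d'}/A\}\{1+\exp(\supnorm{x|_{d'}}/(A\sigmalow\ak{d'}))\}$, the $A$-parameter reflecting the tradeoff between the rate and the cost of the initial excursion. Summing these two contributions yields \eqref{eqn:boundary_hits_tail}.

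The main obstacle I expect is the careful accounting of constants so that the excursion bounds from \cite{banerjeebudhiraja}, which are stated for a single RBM in its own ambient dimension, produce constants $\delta', C'', A_0$ that are truly independent of $d'$, $d$, $\mu$, $R$, $\Sigma$. This requires verifying that those bounds depend on the model parameters only through $\ak{d'}$, $\blowk{d'}$ and $\sigmalow$, which are already absorbed into $\Tk{d'}$, $\Lambdak{d'}$ and the $\supnorm{x|_{d'}}/(A\sigmalow\ak{d'})$ factor. Once this parameter tracking is confirmed, the lemma follows directly from the coupling argument and the cited excursion estimate.
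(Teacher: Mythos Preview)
Your proposal is correct and follows essentially the same approach as the paper: reduce to the $d'$-dimensional dominating RBM $\bar X$ via Theorem~\ref{thm:domination}, observe $\N_{d'}(x,t)\ge\bar\N_{d'}(x|_{d'},t)$, and then invoke the boundary-hitting tail bounds from \cite{banerjeebudhiraja}. The paper resolves the ``main obstacle'' you flag by making the key choice explicit: the cited estimates from \cite{banerjeebudhiraja} are stated for a general weight vector $v>0$ with $\rinvk{d'}v\le\bk{d'}$, and taking $v_i^* = (\ak{d'})^{-1}\sigma_i$ yields exactly $T(v^*)=\Tk{d'}$, $\Lambda(v^*)=\Lambdak{d'}$, and $\|x|_{d'}\|^\star_{\infty,v^*}\le \supnorm{x|_{d'}}/(\sigmalow\ak{d'})$, from which the dimension-independence of $\delta',C'',A_0$ follows directly.
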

\begin{proof} Define $\bar{X}$ as in Theorem \ref{thm:domination} with $k = d'$. The theorem states $\bar{X}$ dominates $X|_{d'}$, the projection of the $d$-dimensional RBM with parameters $(\Sigma, \mu, R)$ onto the first $d'$ coordinates. Therefore, a coordinate of $X|_{d'}$ hits zero whenever the same coordinate of $\bar{X}$ hits zero. In other words, $\N_{d'}(x, t)$ dominates the corresponding quantity for $\bar{X}$, for all $x, t$.

By hypothesis of the lemma, $\bk{d'}>0$. As in \cite{banerjeebudhiraja}, for any $v \in \mathbb{R}^{d'}_+$ satisfying $R^{-1}v \le \bk{d'}, v>0,$ and any $y \in \mathbb{R}^{d'}_+$, define
\begin{align*}
    \| y\|^{\star}_{\infty, v} &:= \sup_{1 \le i \le d'} v_i \sigma_i^{-2}y_i, \quad \Lambda(v) := \inf_{1 \le i \le d'}\sigma_i^{-2}v_i^2,\\
     T(v) &:= \left(1 + \frac{\log\left(2\sum_{i=1}^{d'} v_i^2\sigma_i^{-2}/\Lambda(v))\right)}{\Lambda(v)}\right).
\end{align*}
With these definitions, recalling the stochastic domination noted in the previous paragraph, \cite[Proof of Lemma 8, Equations (33) and (41)]{banerjeebudhiraja} applied to the process $\bar{X}$ give positive constants $\delta', A_0$, not depending on $d', d,\mu,R,\Sigma$, such that for each $x \in \Reals^d_+$, $A \ge A_0$ and $t \ge 4T(v)/\delta'$,
\begin{align}\label{inter}
    &\Prob{\N_{d'}(x, t) <  \delta' t/(4T(v))} \nonumber \\
    &\le \exp\left(-\frac{\delta' t}{128 T(v)} \right) 
    + \exp\left(- \frac{\Lambda(v) t}{16 A}\right) \left\{1 + \exp \left(A^{-1} \| x|_{d'}\|^{\star}_{\infty, v} \right) \right\}.
\end{align}
From certain optimality properties of rates of convergence obtained in \cite{banerjeebudhiraja} (see \cite[Section 8]{banerjeebudhiraja}), we take $v = v^*$ where $v^*_i =  \left(\ak{d'}\right)^{-1} \sigma_i, \ 1 \le i \le d'$. Noting that $T(v^*) = \Tk{d'}$, $\Lambda(v^*) = \Lambdak{d'}$ and $\| x|_{d'}\|^{\star}_{\infty, v^*} \le \supnorm{x|_{d'}}/(\sigmalow \ak{d'})$, the lemma follows from \eqref{inter}.
\end{proof}
The following lemma combines the local contraction estimates obtained in Lemma \ref{lemma:contraction} and the probability estimates on number of times subsets of coordinates hit zero by time $t$, obtained in Lemma \ref{lemma:boundary_hits}, to furnish upper bounds on $ \Expect{u(x, t)}$, $x \in \mathbb{R}^d_+, t \ge 0$.
\begin{lemma}\label{lemma:convergence_intermediate}
Suppose Assumption \ref{assump:main} holds. Fix $d' \in \{k_0, \ldots, d\}$ and $x \in \mathbb{R}^d_+$. Recall $u(x, \cdot)$ from \eqref{eqn:weighted_norm}, the quantities $\lambda, \beta, \delta, C'$ in Lemma \ref{lemma:contraction}, and $A_0, \Lambdak{d'},\Tk{d'}, \delta', C''$ in Lemma \ref{lemma:boundary_hits}. Define
\begin{equation}\label{eqn:lambda_t}
    \lambda(t) = \lambda^{\floor{t\delta'/(4\Tk{d'})}}.
\end{equation}
Then for any $A \ge A_0$ and $t \ge 4\Tk{d'}/\delta'$,
\begin{align}\label{eqn:convergence_intermediate_result}
    \Expect{u(x, t)}
    &\le u(x, 0) \left[ \exp\left(-t\frac{\delta' C''}{\Tk{d'}} \right) + \exp\left(-t  \frac{C''\Lambdak{d'}}{A}\right) \left\{1 + \exp \left(\frac{\supnorm{x|_{d'}}}{A \ak{d'}\sigmalow} \right) \right\} \right] \notag\\
   & \qquad +   u(x, 0) \lambda(t) + C'\bsupnorm{x}{\delta}\>(\beta/\delta)^{d' + 1}.
\end{align}
In the case $d' = d$, \eqref{eqn:convergence_intermediate_result} holds without the $C'\bsupnorm{x}{\delta}\>(\beta/\delta)^{d' + 1}$ term in the bound.
\end{lemma}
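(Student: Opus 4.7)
The proof will be a direct combination of the contraction result in Corollary \ref{cor:contraction1} with the boundary-hitting tail bound in Lemma \ref{lemma:boundary_hits}, using the stopping time $\tau(x,d')$ introduced in the corollary to partition the probability space into a ``contracted'' event, a ``threshold-reached'' event, and a ``too few boundary hits'' event.

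First I would set $q = q(t) := \delta' t / (4\Tk{d'})$ and decompose the event space into three disjoint pieces: $E_1 = \{\tau(x,d') \le t\}$, $E_2 = \{\tau(x,d') > t\} \cap \{\N_{d'}(x,t) \ge q\}$, and $E_3 = \{\tau(x,d') > t\} \cap \{\N_{d'}(x,t) < q\}$. Writing
\begin{equation*}
\Expect{u(x,t)} = \Expect{u(x,t)\ind{E_1}} + \Expect{u(x,t)\ind{E_2}} + \Expect{u(x,t)\ind{E_3}},
\end{equation*}
I would bound each term separately.

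On $E_1$, monotonicity of $t \mapsto u(x,t)$ (inherited from Theorem \ref{thm:rbm_monotonicity} (iii), as already noted at the start of the proof of Lemma \ref{lemma:contraction}) and the definition of $\tau(x,d')$ give $u(x,t) \le u(x,\tau(x,d')) \le C'\bsupnorm{x}{\delta}\,(\beta/\delta)^{d'+1}$, so that $\Expect{u(x,t)\ind{E_1}} \le C'\bsupnorm{x}{\delta}\,(\beta/\delta)^{d'+1}$. On $E_2$, Corollary \ref{cor:contraction1} (i) with the above choice of $q$ yields $u(x,t)\ind{E_2} \le \lambda^{\floor{q}} u(x,0) = \lambda(t)\,u(x,0)$, so $\Expect{u(x,t)\ind{E_2}} \le \lambda(t)\,u(x,0)$. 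On $E_3$, I would use the trivial bound $u(x,t) \le u(x,0)$ (from monotonicity) and observe that $E_3 \subseteq \{\N_{d'}(x,t) < q\}$, so that by Lemma \ref{lemma:boundary_hits}
\begin{equation*}
\Expect{u(x,t)\ind{E_3}} \le u(x,0)\left[\exp\!\left(-t\tfrac{\delta' C''}{\Tk{d'}}\right) + \exp\!\left(-t\tfrac{C''\Lambdak{d'}}{A}\right)\!\left\{1+\exp\!\left(\tfrac{\supnorm{x|_{d'}}}{A\ak{d'}\sigmalow}\right)\right\}\right].
\end{equation*}
Summing the three bounds yields \eqref{eqn:convergence_intermediate_result}. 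The case $d'=d$ is simpler: Corollary \ref{cor:contraction1} (ii) contracts unconditionally on $\{\N_d(x,t)\ge q\}$, so no stopping time $\tau(x,d')$ is needed and the decomposition reduces to $E_2 \cup E_3$, eliminating the $C'\bsupnorm{x}{\delta}(\beta/\delta)^{d'+1}$ term.

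This is essentially a bookkeeping lemma that glues together two already-proven ingredients, so I do not expect a genuine technical obstacle. The only subtlety worth being careful about is verifying that the hypotheses needed for the two input results are met under Assumption \ref{assump:main}: Lemma \ref{lemma:contraction} requires only parts I, II (so Corollary \ref{cor:contraction1} applies), while Lemma \ref{lemma:boundary_hits} requires $\blow^{(d')}>0$ and part IV, both of which are granted for $d'\in\{k_0,\dots,d\}$ by parts III and IV. The condition $t \ge 4\Tk{d'}/\delta'$ is exactly what is needed so that Lemma \ref{lemma:boundary_hits} can be invoked with the chosen $q$, and the condition $A \ge A_0$ is inherited verbatim from that lemma.
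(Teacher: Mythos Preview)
Your proposal is correct and follows essentially the same approach as the paper's own proof: the paper also splits according to $\{\tau(x,d')>t\}$ and then further according to $\{\N_{d'}(x,t) \gtrless \delta' t/(4\Tk{d'})\}$, applying Corollary~\ref{cor:contraction1}(i) on the ``many hits'' piece, monotonicity plus Lemma~\ref{lemma:boundary_hits} on the ``few hits'' piece, and the definition of $\tau(x,d')$ on the complement; the $d'=d$ case is handled identically via Corollary~\ref{cor:contraction1}(ii).
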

\begin{proof}
With $\tau(x, d')$ as in Corollary \ref{cor:contraction1}, we have for any $A \ge A_0$ and $t \ge 4\Tk{d'}/\delta'$,
\begin{align}\label{eqn:convergence_intermediate_3}
    \Expect{u(x, t)} &\le \Expect{u(x, t)\ind{\tau(x, d') > t}} +  C'\bsupnorm{x}{\delta}\>(\beta/\delta)^{d' + 1} \notag \\
    &= \Expect{u(x, t)\ind{\tau(x, d') > t, \> \N_{d'}(x, t) < t\delta'/4\Tk{d'}}} \notag \\
    &\qquad + \Expect{u(x, t)\ind{\tau(x, d') > t, \> \N_{d'}(x, t) \ge t\delta'/4\Tk{d'}}} + C'\bsupnorm{x}{\delta}\>(\beta/\delta)^{d' + 1} \notag \\
    &\le u(x,0)  \Prob{\N_{d'}(x, t) <  \delta' t/(4\Tk{d'})} + \lambda(t)u(x,0) + C'\bsupnorm{x}{\delta}\>(\beta/\delta)^{d' + 1} \notag\\
    &\le u(x, 0) \left[ \exp\left(-t\frac{\delta' C''}{\Tk{d'}} \right) + \exp\left(-t  \frac{C''\Lambdak{d'}}{A}\right) \left\{1 + \exp \left(\frac{\supnorm{x|_{d'}}}{A \ak{d'}\sigmalow} \right) \right\} \right] \notag\\
    &\qquad + \lambda(t)u(x,0) + C'\bsupnorm{x}{\delta}\>(\beta/\delta)^{d' + 1},
\end{align}
where the second inequality follows from the monotonicity of $u$ and Corollary \ref{cor:contraction1}, and the last inequality follows from Lemma \ref{lemma:boundary_hits}. 

When $d'=d$, by Corollary \ref{cor:contraction1},
\begin{equation*}\label{eqn:convergence_intermediate_6}
    u(x, t) \> \ind{N_{d}(x, t) \ge t\delta'/4\Tk{d}} \le \lambda(t) u(x, 0).
\end{equation*}
Thus, again using the monotonicity of $u$ and applying Lemma \ref{lemma:boundary_hits} with $d' = d$, we have for any $A \ge A_0$ and $t \ge 4\Tk{d'}/\delta'$,
\begin{multline}\label{eqn:convergence_intermediate_7}
    \Expect{u(x, t)} \le \Expect{u(x, t) \> \ind{N_{d}(x, t) < t\delta'/4\Tk{d}}} + \Expect{u(x, t) \> \ind{N_{d}(x, t) \ge t\delta'/4\Tk{d}}} \\
    \le u(x, 0)\left[ \exp\left(-t\frac{\delta' C''}{\Tk{d}} \right) + \exp\left(-t  \frac{C''\Lambdak{d}}{A}\right) \left\{1 + \exp \left(\frac{\supnorm{x}}{A\ak{d}\sigmalow} \right) \right\} \right] + \lambda(t) u(x, 0).
\end{multline}
The lemma follows from \eqref{eqn:convergence_intermediate_3} and \eqref{eqn:convergence_intermediate_7}.
\end{proof}
For any $x \in \mathbb{R}^d_+$ and $d' \in \{k_0, \dots,d-1\}$, Lemma \ref{lemma:convergence_intermediate} shows that one can track the number of times the first $d'$ co-ordinates of $X(x,\cdot)$ hit zero by time $t$ to achieve exponential contraction in time $t$ of the weighted distance $u(x,\cdot)$ between $X(x,\cdot)$ and $X(0,\cdot)$, till $u(x,\cdot)$ hits $C'\bsupnorm{x}{\delta}\>(\beta/\delta)^{d' + 1}$. Thus, to ensure that this exponential contraction holds till $u(x,\cdot)$ is small, $d'$ should be close to $d$. However, for large $d$, choosing a large $d'$ slows down the convergence rate as it takes a long time for the $d'$ co-ordinates to hit zero. This is manifested in the large value of $\Tk{d'}$ which makes the exponential contraction coefficient in \eqref{eqn:convergence_intermediate_result} small. In the next lemma, we take an \emph{adaptive approach where the number of co-ordinates tracked increases with time}. Suppose Assumption \ref{assump:main} holds. With $r^* \ge 0$ as in III of Assumption \ref{assump:main}, set
\begin{equation}\label{eqn:ellt}
    \ell(t) = \begin{cases}
    d\wedge \floor{t^{1/(3 + 2r^*)}} &\quad \text{under Assumption } \ref{assump:main},\\
    d \wedge  \floor{t^{1/(1+2r^*)}} &\quad \text{under Assumption } \ref{assump:bounded_row}.\\
    \end{cases}
\end{equation}
$\ell(\cdot)$ represents the time varying number of coordinates of the process $X(x,\cdot)$ that must hit zero to achieve a desired contraction. The choice of $\ell(\cdot)$ is obtained by optimizing bounds on the exponents appearing in \eqref{eqn:convergence_intermediate_result} which depend on the assumptions.

\begin{lemma}[Decay rate of $\Expect{u(x, \> \cdot)}$]\label{lemma:convergence_mean_u} Fix an initial condition $X(x, 0) = x \ge 0$. With $\delta, \beta$ as in Lemma \ref{lemma:contraction}, recall the weighted supremum norm $\bsupnorm{x}{\delta}$ and the process $u(x, \cdot)$ as in \eqref{eqn:weighted_norm}. Define $\ell(\cdot)$ as in \eqref{eqn:ellt}.

If Assumption \ref{assump:main} holds, there exist constants $C_0, C_1 > 0$ not depending on $d, x$, $r^*$ such that, with $k_0' = k_0'(r^*) = k_0 \vee \left(\frac{8(3+2r^*)}{C_0' e} \right)^2$, we have for $d > k_0'$ and any $A \ge A_0$ ($A_0$ defined in Lemma \ref{lemma:boundary_hits}),
\begin{equation} \label{eqn:convergence_meanu_result_main}
    \Expect{u(x, t)}
    \le 
    \begin{cases}{\scriptstyle
   C_1 \left(u(x, 0)e^{\frac{\supnorm{x|_{\ell(t)}}}{A\sigmalow\ak{\ell(t)}}} + \bsupnorm{x}{\delta}\right) \> e^{-\frac{C_0}{A} t^{1/(3+2r^*)}}
    + C_1u(x, 0)\> e^{-C_0 \frac{t^{1/(3+2r^*)}}{\log t}}, 
    \quad  k_0' \le \ell(t) < d,
    }
    \\
    {\scriptstyle
    C_1 u(x, 0)e^{\frac{\supnorm{x}}{A\sigmalow\ak{d}}} \> e^{-C_0\frac{t}{Ad^{2(1+r^*)}}} + C_1 u(x, 0)e^{- C_0 \frac{t}{d^{2(1+r^*)}\log d}},
    \quad \ell(t) = d.
    }
    \end{cases}
\end{equation}
If Assumption \ref{assump:bounded_row} holds, we have using the same constants $k_0', C_0, C_1$,
\begin{equation} \label{eqn:convergence_meanu_result_bounded_row}
    \Expect{u(x, t)}
    \le 
    \begin{cases}
    {\scriptstyle
    C_1 \left(u(x, 0)e^{\frac{\supnorm{x|_{\ell(t)}}}{A\sigmalow\ak{\ell(t)}}} + \bsupnorm{x}{\delta}\right) \> e^{-\frac{C_0}{A}t^{1/(1+2r^*)}} + C_1u(x, 0)\> e^{-C_0 \frac{t^{1/(1+2r^*)}}{\log t}}, \quad  k_0' \le \ell(t) < d,
    }
    \\
    {\scriptstyle
    C_1 u(x, 0)e^{\frac{\supnorm{x}}{A\sigmalow\ak{d}}} \> e^{-C_0\frac{t}{Ad^{2r^*}}} + C_1 u(x, 0)e^{- C_0 \frac{t}{d^{2r^*}\log d}},
    \quad \ell(t) = d.
    }
    \end{cases}
\end{equation}
\end{lemma}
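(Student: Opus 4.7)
The strategy is to apply Lemma \ref{lemma:convergence_intermediate} with the adaptive choice $d' = \ell(t)$ from \eqref{eqn:ellt} and then translate the resulting exponents involving $\Tk{\ell(t)}$, $\Lambdak{\ell(t)}$, $\ak{\ell(t)}$ into explicit polynomial-in-$t$ (or $d$) rates, using parts III and IV of Assumption \ref{assump:main} together with the entrywise bounds on $\rinv$ from parts I and II (respectively II$'$).

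First I would establish polynomial bounds on $\ak{k}$. Since $P|_k$ inherits substochasticity and transience from $P$, we have $(R|_k)^{-1} = \sum_{n\ge 0}((P|_k)^T)^n$, and a pathwise comparison shows $(R|_k)^{-1}_{ij} \le (\rinv)_{ij}$ for $1 \le i, j \le k$. Under Assumption \ref{assump:main}, splitting $\sum_{j=1}^k (R|_k)^{-1}_{ij}\sigma_j$ into $j \le i$ (bounded using II, contributing $O(k)$) and $j > i$ (bounded using I via a geometric series, contributing $O(1)$), and then invoking III and IV, yields $\ak{k} \le \tilde{C} k^{1+r^*}$ for a constant $\tilde{C}$ independent of $d$. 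Under the strengthened Assumption \ref{assump:bounded_row}, II$'$ replaces the $O(k)$ bound by $O(1)$ and yields the sharper estimate $\ak{k} \le \tilde{C} k^{r^*}$. Consequently, with $\Tk{k}, \Lambdak{k}$ as in \eqref{eqn:hitting_pars}, we obtain $\Tk{k} \le 1 + \tilde{C}k^{2(1+r^*)}\log(2k)$ and $\Lambdak{k} \ge \tilde{c}\, k^{-2(1+r^*)}$ under Assumption \ref{assump:main}, and the analogous estimates with $r^*$ in place of $1+r^*$ under Assumption \ref{assump:bounded_row}.

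Next I would plug $d' = \ell(t)$ into \eqref{eqn:convergence_intermediate_result} and compute. Take Assumption \ref{assump:main} with $\ell(t) = \floor{t^{1/(3+2r^*)}} < d$: the estimate on $\Tk{\ell(t)}$ gives $t/\Tk{\ell(t)} \ge c\, t^{1/(3+2r^*)}/\log t$, so $\lambda(t) \le \exp(-c\, t^{1/(3+2r^*)}/\log t)$, and the first summand of \eqref{eqn:convergence_intermediate_result} decays at the same rate; the estimate on $\Lambdak{\ell(t)}$ gives $t\Lambdak{\ell(t)}/A \ge c\, t^{1/(3+2r^*)}/A$, producing the factor $\exp(-C_0 t^{1/(3+2r^*)}/A)$ attached to the term containing $\supnorm{x|_{\ell(t)}}/(A\sigmalow \ak{\ell(t)})$ (using $1+e^y \le 2 e^y$); and $(\beta/\delta)^{\ell(t)+1} \le \exp(-c_1 t^{1/(3+2r^*)})$ with $c_1 = \log(\delta/\beta)$, so choosing $C_0 \le A_0 c_1$ allows this contribution to be absorbed into the $\bsupnorm{x}{\delta}$ term of the stated bound. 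In the regime $\ell(t) = d$, the same substitution with $d' = d$ (in which case the $C'\bsupnorm{x}{\delta}(\beta/\delta)^{d+1}$ term is absent, per the last sentence of Lemma \ref{lemma:convergence_intermediate}) produces the exponents $t/(A d^{2(1+r^*)})$ and $t/(d^{2(1+r^*)}\log d)$. The proof under Assumption \ref{assump:bounded_row} is identical, with $1/(3+2r^*)$ replaced by $1/(1+2r^*)$ and $2(1+r^*)$ replaced by $2r^*$, reflecting the sharper bound on $\ak{k}$.

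Finally I would verify admissibility: Lemma \ref{lemma:convergence_intermediate} requires $\ell(t) \ge k_0$ and $t \ge 4\Tk{\ell(t)}/\delta'$. In the regime $\ell(t) < d$ the latter reduces to $t^{1/(3+2r^*)} \ge c(r^*)\log t$, which is ensured by the definition $k_0'(r^*) = k_0 \vee (8(3+2r^*)/(C_0' e))^2$; the former is embedded in the same definition. The step I expect to be most delicate is tracking the constants uniformly in $r^*$ and $A$, so that $C_0, C_1$ are independent of $d, x, r^*$ and so that the two regimes of the bound patch at $t \asymp d^{3+2r^*}$ (respectively $d^{1+2r^*}$) without mismatched constants; everything else is careful but routine algebra.
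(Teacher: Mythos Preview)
Your proposal is correct and follows essentially the same approach as the paper: apply Lemma \ref{lemma:convergence_intermediate} with $d'=\ell(t)$, bound $\ak{\ell(t)}$ polynomially via Assumptions III--IV and the entrywise bounds on $\rinv$, translate into the stated exponents, and verify the admissibility condition $t\ge 4\Tk{\ell(t)}/\delta'$ through the definition of $k_0'(r^*)$. The only cosmetic difference is that you split $\sum_j (R|_k)^{-1}_{ij}\sigma_j$ into $j\le i$ (via II) and $j>i$ (via I), whereas the paper simply uses II on all terms to get $\sum_j (\rinv)_{ij}\le d'M$; both yield $\ak{d'}\le \tilde C (d')^{1+r^*}$ under Assumption \ref{assump:main} and $\ak{d'}\le \tilde C (d')^{r^*}$ under Assumption \ref{assump:bounded_row}, and the rest of the computation is identical.
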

\begin{proof} We will employ Lemma \ref{lemma:convergence_intermediate} with $d' = \ell(t)$. We will consider two cases: $k_0 \le \ell(t) < d$ and $\ell(t) = d$.

In the work below, all constants depend on $\alpha, M, C, r^*,b_0, \sigmalow,\sigmahigh$ in the notation of Assumptions \ref{assump:main} and \ref{assump:bounded_row}, and $\beta \in (\alpha, 1)$ of Lemma \ref{lemma:contraction}.
\begin{case}[$k_0 \le \ell(t) < d$]
First suppose Assumption \ref{assump:main} holds. Set $d' = \ell(t)$ where for now we suppress the dependence on $t$. To employ the bound in Lemma \ref{lemma:convergence_intermediate}, we consider bounds on the quantities $\Tk{d'}, \Lambdak{d'}$  and $\ak{d'}$. III of Assumption \ref{assump:main} implies $\blowk{d'} \ge b_0 (d')^{-r^*}$ for some $b_0 >0$ not depending on $d$. This along with II, IV of Assumption \ref{assump:main} gives
\begin{equation}\label{eqn:convergence_mean_u7}
    \ak{d'} \le \sigmahigh \> \max_{1 \le i \le d'} \frac{1}{\bk{d'}_i}\sum_{j = 1}^{d'} (\rinv)_{ij} \le \frac{d' \sigmahigh M}{\blowk{d'}} \le (d')^{1+r^*} \frac{\sigmahigh M }{b_0}.
\end{equation}
Here, we have used $((R|_{d'})^{-1})_{ij} \le (\rinv)_{ij}$ for $1 \le i,j \le d'$ in the first inequality, which is a consequence of $P^T$ having non-negative entries.
From \eqref{eqn:convergence_mean_u7} and the definitions in \eqref{eqn:hitting_pars}, setting $A \ge A_0$ and recalling $d' = \ell(t) = d\wedge \floor{t^{1/(3 + 2r^*)}}$, there exists $C_0' > 0$ not dependent on $d', d, r^*$ such that for all $t \ge 2$,
\begin{eqnarray}\label{eqn:convergence_mean_u8}
-t \frac{\Lambdak{d'}}{A} &=& -t\frac{1}{A(\ak{d'})^2} \le -t\frac{b_0^2}{A(d')^{2(1+r^*)} \left(\sigmahigh M\right)^2}
 \le -\frac{C_0'}{A} t^{1/(3+2r^*)}, \nonumber \\
-t\frac{\delta'}{\Tk{d'}} &\le& -t\frac{\delta'}{1 +   (d')^{2(1+r^*)}\left(\frac{\sigmahigh M }{b_0}\right)^2\> \log\left(2 d' \right)} \le -C_0'\frac{t^{1/(3+2r^*)}}{\log t}.
\end{eqnarray}
$C_0'$ in the above can be taken to be $\left(2\left(\frac{\sigmahigh M }{b_0}\right)^2 + 2\log 2\right)^{-1}$. Recalling $\lambda(t) = \lambda^{\floor{t\delta'/4\Tk{d'}}}$, \eqref{eqn:convergence_mean_u8} also gives
\begin{equation}\label{eqn:convergence_mean_u10}
    \lambda(t) \le \lambda^{\frac{C_0'}{4}\frac{t^{1/(3+2r^*)}}{\log t} - 1}.
\end{equation}
In addition, \eqref{eqn:convergence_mean_u8} implies $\frac{4\Tk{d'}}{\delta'} \le \frac{4}{C_0'} t^{1-1/(3+2r^*)} \log t.$ 
Since $t^{-1/2(3+2r^*)}\log t$ as a function of $t$ is upper-bounded by $\frac{2(3+2r^*)}{e}$, we have $\frac{4\Tk{d'}}{\delta'} \le t$ for $\left(\frac{8(3+2r^*)}{C_0' e}\right)^{2(3 + 2r^*)} \le t$. These calculations show the condition $t \ge \frac{4\Tk{d'}}{\delta'}$ in Lemma \ref{lemma:convergence_intermediate} holds when  $\ell(t) \ge \left(\frac{8(3+2r^*)}{C_0' e}\right)^2$.

We now apply \eqref{eqn:convergence_mean_u8}, \eqref{eqn:convergence_mean_u10} to \eqref{eqn:convergence_intermediate_result} in Lemma \ref{lemma:convergence_intermediate}, with $A \ge A_0$. Recalling $k_0' = k_0'(r^*) = k_0 \vee \left(\frac{8(3+2r^*)}{C_0' e}\right)^2$ we have
\begin{multline}\label{eqn:convergence_mean_u11}
    \Expect{u(x, t)}
    \le u(x, 0)\left[1 + \exp \left(\frac{\supnorm{x|_{\ell(t)}}}{A\sigmalow \ak{\ell(t)}} \right) \right] \> e^{-\frac{C'' C_0'}{A}t^{1/(3+2r^*)}} + u(x, 0)e^{-C'' C_0'\frac{t^{1/(3+2r^*)}}{\log t}}\\
    + u(x, 0) \, \lambda^{\frac{C_0'}{4}\frac{t^{1/(3+2r^*)}}{\log t} - 1} +  C'\bsupnorm{x}{\delta}\>(\beta/\delta)^{t^{1/(3+2r^*)}}, \quad \quad \text{for} \quad   k_0' \le \ell(t) < d,
\end{multline}
where we used in the second line $d' + 1 = \ell(t) + 1 \ge t^{1/(3+2r^*)}$. This proves the first case in \eqref{eqn:convergence_meanu_result_main} with
\begin{eqnarray}\label{eqn:convergence_mean_u12}
C_0 &=& C'' C_0' \wedge \frac{C_0'}{4}\log \frac{1}{\lambda} \wedge \log \frac{\delta}{\beta}, \nonumber \\
C_1 &=& \left(2 + \frac{1}{\lambda}\right)\vee C'.
\end{eqnarray}
If Assumption \ref{assump:bounded_row} holds, we set $d' = \ell(t) = d\wedge \floor{t^{1/(1+2r^*)}}$. Instead of \eqref{eqn:convergence_mean_u7} we have
\begin{equation}\label{eqn:convergence_mean_u7_bc}
    \ak{d'} \le \sigmahigh \> \max_{1 \le i \le d'} \frac{1}{\bk{d'}_i}\sum_{j = 1}^{d'} (\rinv)_{ij} \le \frac{\sigmahigh M}{\blowk{d'}} \le (d')^{r^*} \frac{\sigmahigh M }{b_0}.
\end{equation}
Proceeding in the same way as \eqref{eqn:convergence_mean_u8}, we use \eqref{eqn:convergence_mean_u7_bc} to show
\begin{eqnarray}\label{eqn:convergence_mean_u8_bc}
-t \frac{\Lambdak{d'}}{A} &=& -t\frac{1}{A(\ak{d'})^2} \le -t\frac{b_0^2}{A(d')^{2r^*} \left(\sigmahigh M\right)^2}
 \le  -\frac{C_0'}{A} t^{1/(1+2r^*)}, \nonumber \\
-t\frac{\delta'}{\Tk{d'}} &\le& -t\frac{\delta'}{1 + (d')^{2r^*}\left(\frac{\sigmahigh M }{b_0}\right)^2\> \log\left(2 d' \right)} \le -C_0' \frac{t^{1/(1+2r^*)}}{\log t}.
\end{eqnarray}
Arguing as in \eqref{eqn:convergence_mean_u10} but using \eqref{eqn:convergence_mean_u8_bc} instead of \eqref{eqn:convergence_mean_u8}, we have
\begin{equation}\label{eqn:convergence_mean_u10_bc}
    \lambda(t) \le \lambda^{\frac{C_0'}{4}\frac{t^{1/(1+2r^*)}}{\log t} - 1}
\end{equation}
Using \eqref{eqn:convergence_mean_u8_bc}, we have $\frac{4\Tk{d'}}{\delta'} \le \frac{4}{C_0'} t^{1 - 1/(1+2r^*)} \log t$. As in the argument after \eqref{eqn:convergence_mean_u10}, $\frac{4\Tk{d'}}{\delta'} \le t$ for $t$ such that $\ell(t) \ge \left(\frac{8(1+2r^*)}{C_0' e} \right)^2$, under which Lemma \ref{lemma:convergence_intermediate} is valid. We now apply \eqref{eqn:convergence_mean_u8_bc}, \eqref{eqn:convergence_mean_u10_bc} to \eqref{eqn:convergence_intermediate_result}:
\begin{multline}\label{eqn:convergence_mean_u11_bc}
    \Expect{u(x, t)}
    \le u(x, 0)\left[1 + \exp \left(\frac{\supnorm{x|_{\ell(t)}}}{A\ak{\ell(t)} \sigmalow} \right) \right] \> e^{-\frac{C'' C_0'}{A} t^{1/(1+2r^*)}} + u(x, 0)e^{-C'' C_0' \frac{t^{1/(1+2r^*)}}{\log t}}\\
    + u(x, 0) \, \lambda^{\frac{C_0'}{4}\frac{t^{1/(1+2r^*)}}{\log t} - 1} +  C'\bsupnorm{x}{\delta}\>(\beta/\delta)^{t^{1/(1+2r^*)}}, \quad \quad \text{for} \quad k_0' \le \ell(t) < d.
\end{multline}
This proves the first case in \eqref{eqn:convergence_meanu_result_bounded_row} with $C_0, C_1$ as in \eqref{eqn:convergence_mean_u12}. Since $\left(\frac{8(1+2r^*)}{C_0' e} \right)^2 < \left(\frac{8(3+2r^*)}{C_0' e} \right)^2$, we use the same $k_0'$ in  \eqref{eqn:convergence_mean_u11} and \eqref{eqn:convergence_mean_u11_bc}.
\end{case}
\begin{case}[$\ell(t) = d$] First we consider the scenario of Assumption \ref{assump:main}, in which case $\ell(t) = d$ implies $t \ge d^{3+2r^*}$. We follow the same basic recipe: We use Lemma \ref{lemma:convergence_intermediate}, this time in the case $d' = d$, and bound the quantities $\ak{d}, \Tk{d}, \Lambdak{d}$.

The bound on $\ak{d'}$ in \eqref{eqn:convergence_mean_u7} continues to hold with $d' = d$, and using this with we have
\begin{eqnarray}\label{eqn:convergence_mean_u15}
-t \frac{\Lambdak{d}}{A} &=& -t\frac{1}{A(\ak{d})^2} \le -t\frac{Ab_0^2}{d^{2(1+r^*)} \left(\sigmahigh M\right)^2}
 \le -C_0' \frac{t}{d^{2(1+r^*)}}, \nonumber \\
-t\frac{\delta'}{\Tk{d}} &\le& -t\frac{\delta'}{1 +   d^{2(1+r^*)}\left(\frac{\sigmahigh M }{b_0}\right)^2\> \log\left(2 d \right)} \le -C_0'\frac{t}{d^{2(1+r^*)}\log d}.
\end{eqnarray}
Now \eqref{eqn:convergence_mean_u15} implies
\begin{equation}\label{eqn:convergence_mean_u16}
    \lambda(t) \le \lambda^{\frac{C_0'}{4} \frac{t}{d^{2(1+r^*)}\log d} - 1}.
\end{equation}
Since the lemma statement has imposed $d > k_0'$ we have $t \ge [k_0'(r^*)]^{3+2r^*}$. Applying the argument preceding \eqref{eqn:convergence_mean_u11}, this implies $t \ge \frac{4 \Tk{d}}{\delta'}$ and thus Lemma \ref{lemma:convergence_intermediate} holds for all $t \ge [k_0'(r^*)]^{3+2r^*}$ in the case $\ell(t) = d$. Using \eqref{eqn:convergence_mean_u15}, \eqref{eqn:convergence_mean_u16} in \eqref{eqn:convergence_intermediate_result} (without the $C'\bsupnorm{x}{\delta}\>(\beta/\delta)^{d' + 1}$ term) we have
\begin{equation}\label{eqn:convergence_mean_u17}
    \Expect{u(x, t)}
    \le C_1u(x, 0)e^{\frac{\supnorm{x}}{A\ak{d} \sigmalow}} \> e^{-C_0\frac{t}{Ad^{2(1+r^*)}}} + C_1 u(x, 0)e^{- C_0 \frac{t}{d^{2(1+r^*)}\log d}},
    \quad \quad \text{for} \quad \ell(t) = d,
\end{equation}
where we use $C_0, C_1$ from \eqref{eqn:convergence_mean_u12}. This is the second line in \eqref{eqn:convergence_meanu_result_main}.

When Assumption \ref{assump:bounded_row} holds, $\ell(t) = d$ implies $t \ge d^{1+2r^*}$. The second line of \eqref{eqn:convergence_meanu_result_bounded_row} is proven in identical fashion to \eqref{eqn:convergence_mean_u17}, after accounting for the stronger assumptions in the same way as we did in \eqref{eqn:convergence_mean_u8_bc} and \eqref{eqn:convergence_mean_u11_bc}.
\end{case}
\end{proof}
For any $\beta \in (\alpha,1)$ and $x \in \mathbb{R}^d_+$, Lemma \ref{lemma:convergence_mean_u} gives quantitative estimates for the decay rate of the $\bnorm{\cdot}$ distance between $X(x,\cdot)$ and $X(0,\cdot)$. To use this in furnishing rates of convergence to stationarity in $\bnorm{\cdot}$ distance starting from any $x \in \mathbb{R}^d_+$, namely Theorem \ref{thm:main_fromx}, we use Theorem \ref{thm:rbm_monotonicity} to make the following simple observation. Recalling $u$ in \eqref{eqn:weighted_norm} and $u_\pi$ in \eqref{eqn:u_pi}, we have by the triangle inequality,
\begin{align}\label{eqn:main_fromx1}
\bnorm{\left(X(x, t) - X(X(\infty), t)\right)} &\le \bnorm{\left(X(x, t) - X(0, t)\right)} + \bnorm{\left(X(X(\infty), t) - X(0, t)\right)}\notag\\
&\le \bnorm{\rinv\left(X(x, t) - X(0, t)\right)} \notag \\
&+ \bnorm{\rinv\left(X(X(\infty), t) - X(0, t)\right)}\notag\\
 & = u(x, t) + u_\pi(t).
\end{align}
To bound the expectation of the final two terms in \eqref{eqn:main_fromx1}, we apply Lemma \ref{lemma:convergence_mean_u} to bound $\Expect{u(x, t)}$. To bound $\Expect{u_\pi(t)}$, we will use a slightly altered version of Lemma \ref{lemma:convergence_intermediate} and Lemma \ref{lemma:convergence_mean_u} conditional on $x = X(\infty)$ followed by taking expectation in the law of $X(\infty)$. This will require quantitative control over moments of several functionals of $X(\infty)$. This is the objective of the following lemma.
\begin{lemma}[Moments under stationarity]\label{lemma:stat_moments}
Suppose Assumption \ref{assump:main} holds, with $\alpha \in (0, 1)$ set therein. Fix $\beta \in (\alpha, 1)$ and define $u(x, 0) = \bnorm{x}$ as in \eqref{eqn:weighted_norm}. Fix $\delta \in (\beta, 1)$.

Recall the random variable $X(\infty)$ distributed as the stationary distribution for the process \eqref{eqn:rbm}. Fix $d' \in \{k_0 \ldots d\}$. Then there exists a constant $C'''\ge 1$ not depending on $d', d$ or $r^*$ (see III of Assumption \ref{assump:main}) such that
\begin{eqnarray}
\Expect{\exp\left(2 \frac{\supnorm{X|_{d'}(\infty)}}{A\sigmalow\ak{d'}} \right)} &\le& 1+d'\quad \quad \text{for }\> A \ge 2 d'\frac{\sigmahigh M}{\sigmalow}, \label{eqn:stat_moment_result1}\\
\Expect{\bsupnorm{X(\infty)}{\delta}} \le \Expect{\bsupnorm{X(\infty)}{\sqrt{\delta}}} &\le& C''' L_1(\delta), \label{eqn:stat_moment_result2}\\
\Expect{u\left(X(\infty), 0\right)} \le \sqrt{\Expect{u^2\left(X(\infty), 0\right)}} &\le& C''' L_1(\delta). \label{eqn:stat_moment_result3}
\end{eqnarray}
where
$L_1(\delta) := \left(k_0^{r^* + 1} +  \sum_{i = k_0}^d i^{3 + r^*} \delta^{i/2} \right)$. 
If in addition Assumption \ref{assump:bounded_row} holds, we have
\begin{eqnarray}
\Expect{\exp\left(2 \frac{\supnorm{X|_{d'}(\infty)}}{A\sigmalow\ak{d'}} \right)} &\le& 1+d'\quad \quad \text{for }\> A \ge 2 \frac{\sigmahigh M}{\sigmalow}, \label{eqn:stat_moment_result1_bc}\\
\Expect{\bsupnorm{X(\infty)}{\delta}} \le \Expect{\bsupnorm{X(\infty)}{\sqrt{\delta}}} &\le& C''' L_2(\delta), \label{eqn:stat_moment_result2_bc}\\
\Expect{u\left(X(\infty), 0\right)} \le \sqrt{\Expect{u^2\left(X(\infty), 0\right)}} &\le& C''' L_2(\delta). \label{eqn:stat_moment_result3_bc}
\end{eqnarray}
where
$L_2(\delta) := \left(k_0^{r^*} +  \sum_{i = k_0}^d i^{2 + r^*} \delta^{i/2} \right)$.
\end{lemma}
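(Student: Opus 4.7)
The plan is to prove the three groups of bounds in order, each leveraging the previous. First, for the exponential moment bound \eqref{eqn:stat_moment_result1}, apply Theorem \ref{thm:domination} to stochastically dominate $X|_{d'}(\cdot) \le \bar{X}(\cdot)$, where $\bar{X}$ is the $d'$-dimensional RBM with restricted parameters $(\Sigma|_{d'}, \mu|_{d'}, R|_{d'})$. Since Assumption \ref{assump:main}.III guarantees $\bk{d'} > 0$, $\bar{X}$ is positive recurrent and the dominance lifts to its stationary distribution. It then suffices to control the MGF of $\supnorm{\bar{X}(\infty)}$, which is accomplished in \cite[proof of Lemma 8]{banerjeebudhiraja} via a Lyapunov function of the form $\phi(y) = \exp(\langle v^*, y\rangle)$ with $v^*_i = (\ak{d'})^{-1}\sigma_i$, the same optimal choice that appeared in Lemma \ref{lemma:boundary_hits}. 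Using $\exp(\theta\supnorm{y}) \le \sum_{i=1}^{d'}\exp(\theta y_i)$, the single-coordinate MGF bounds combine to produce the factor $1+d'$. The choice $\theta = 2/(A\sigmalow\ak{d'})$ together with $A \ge 2d'\sigmahigh M/\sigmalow$ (respectively $A \ge 2\sigmahigh M/\sigmalow$ under Assumption \ref{assump:bounded_row}, where the $d'$ factor is not needed because row sums of $\rinv$ are uniformly bounded) keeps every coordinate's MGF finite, using the scaling $\ak{d'} \le (d')^{1+r^*}\sigmahigh M/b_0$ from \eqref{eqn:convergence_mean_u7} (resp.\ $\ak{d'} \le (d')^{r^*}\sigmahigh M/b_0$ from \eqref{eqn:convergence_mean_u7_bc}).

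Second, derive the weighted supremum bounds \eqref{eqn:stat_moment_result2} and \eqref{eqn:stat_moment_result2_bc} by tail integration. Markov's inequality applied to the exponential moment gives $\Prob{\supnorm{X|_i(\infty)} > t} \le (1+i)\exp(-t/(i\sigmahigh M\ak{i}))$, and integrating in $t$ produces $\Expect{X_j(\infty)} \le \Expect{\supnorm{X|_j(\infty)}} \le C\,j^{r^*+3}$ for $j \ge k_0$ (with exponent $r^* + 2$ under Assumption \ref{assump:bounded_row}). Then estimate
\[
\Expect{\bsupnorm{X(\infty)}{\sqrt{\delta}}} \le \Expect{\max_{1 \le i < k_0}\delta^{i/2}X_i(\infty)} + \sum_{i=k_0}^{d}\delta^{i/2}\,\Expect{X_i(\infty)},
\]
bounding the low-coordinate term by $C k_0^{r^*+1}$ (resp.\ $Ck_0^{r^*}$) via a direct single-coordinate MGF argument applied to the $k_0$-dimensional dominating RBM, where $\Expect{\bar{X}_j(\infty)} \le C\sigmahigh^2/\bk{k_0}_j \le C k_0^{r^*}$ and summing $k_0$ such terms gives the extra factor of $k_0$; and bounding the tail series by $C\sum_{i \ge k_0} i^{r^*+3}\delta^{i/2}$ (resp.\ $C\sum_{i \ge k_0} i^{r^*+2}\delta^{i/2}$), together yielding $C''' L_1(\delta)$ (resp.\ $C''' L_2(\delta)$). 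The first inequality in \eqref{eqn:stat_moment_result2} uses $\bsupnorm{\cdot}{\delta} \le \bsupnorm{\cdot}{\sqrt{\delta}}$ since $\sqrt{\delta} > \delta$.

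Third, prove \eqref{eqn:stat_moment_result3} by applying Cauchy--Schwarz twice to $u(X(\infty),0)^2 = \bigl(\sum_i \beta^i(\rinv X(\infty))_i\bigr)^2$: once to the outer sum (using $\sum_i\beta^i < \infty$), once to the inner sum $(\rinv X(\infty))_i = \sum_j (\rinv)_{ij}X_j(\infty)$ split at $j = i$ and controlled via $(\rinv)_{ij} \le C\alpha^{j-i}$ for $j \ge i$ together with the general bound $(\rinv)_{ij} \le M$ (Assumption \ref{assump:main}.I, II). After taking expectations, $\Expect{X_j(\infty)^2}$ is bounded by the analogous tail-integration scheme applied to the second moment, yielding $\Expect{X_j(\infty)^2}^{1/2} \le C\,j^{r^*+3}$ (resp.\ $C\,j^{r^*+2}$). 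Rearranging the resulting double sum, the $\beta^i$ and $\alpha^{j-i}$ geometric factors combine---using $\beta > \alpha$ for absolute convergence---to produce a bound of order $L_1(\delta)^2$, so the square root is the claimed $O(L_1(\delta))$. The Assumption \ref{assump:bounded_row} case is identical but uses II' in place of II and the tighter $\ak{i}$ scaling.

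The main obstacle is the first step: establishing the sharp scale $\theta \asymp 1/(A\sigmalow\ak{d'})$ with the tight $1+d'$ factor, which requires carefully choosing the Lyapunov vector $v^*$ to balance the oblique reflection term $R|_{d'}\bar{L}$ against the drift $\mu|_{d'}$ and the noise $D|_{d'}$; fortunately, these calculations are carried out explicitly in \cite{banerjeebudhiraja}, so we need only specialize their conclusions to the projected system $\bar{X}$. The remaining two stages are then systematic computations combining tail integration, Cauchy--Schwarz, and geometric series bounds in $\alpha$, $\beta$, and $\delta$.
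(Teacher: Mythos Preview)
Your proposal is correct and follows essentially the same three-step architecture as the paper: stochastic domination by the restricted RBM $\bar X^{(d')}$, exponential tail bounds for its stationary law, then tail integration and Cauchy--Schwarz/Jensen to obtain the weighted-norm moments. One small correction: the exponential tail/MGF bound for the stationary distribution of $\bar X^{(d')}$ does not come from \cite[Lemma~8]{banerjeebudhiraja} (that lemma concerns boundary-hitting times); the paper instead invokes \cite[Section~6, Lemmas~4 and~12]{harrisonwilliams}, which give directly that for any $\theta^{(d')}>0$ with $(R|_{d'})^{-1}\theta^{(d')}\le b^{(d')}$ one has $\Prob{(R|_{d'})^{-1}\bar X^{(d')}(\infty)\le (R|_{d'})^{-1}z}\ge 1-\sum_j e^{-2z_j\theta^{(d')}_j/\sigma_j}$, and then takes $\theta^{(d')}_j=\sigmalow/a^{(d')}$---the same parameter scale you identified.
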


\begin{proof}
For $k \in \{k_0, \ldots, d\}$, write $\bar{X}^{(k)}$ for the process $\bar{X}$ defined in Theorem \ref{thm:domination}.

III of Assumption \ref{assump:main} imposes $- \rinvk{d'} \muk{d'} = \bk{d'} > 0$. Thus $\bar{X}^{(d')}$ has a stationary distribution (\cite{harrisonwilliams}, Section 6). We write $\bar{X}^{(d')}(\infty)$ for the random variable with this distribution. From \cite[Lemma 4, Section 6]{harrisonwilliams} and \cite[Lemma 12 and its proof, Section 6]{harrisonwilliams}, for any $\thetak{d'} \in \Reals^{d'}$ such that 
\begin{equation}\label{eqn:stat_moments1}
    \thetak{d'} > 0, \quad \quad \rinvk{d'} \thetak{d'} \le \bk{d'},
\end{equation}
we have
\begin{align}\label{eqn:stat_moments2}
   \Prob{\rinvk{d'}\bar{X}^{(d')}(\infty) \le \rinvk{d'} z} &= \liminf_{t \to \infty}\Prob{\rinvk{d'}\bar{X}^{(d')}(0, t) \le \rinvk{d'} z} \notag \\
   & \ge 1 - \sum_{j = 1}^{d'} \exp \left(- 2z_j\frac{\thetak{d'}_j}{\sigma_j} \right) , \quad \quad z \in \Reals^{d'}_+.
\end{align}
In other words, the distribution of $\rinvk{d'}\bar{X}^{(d')}(\infty)$ has exponential tails. This is the key fact in proving the lemma, and the remainder of the argument is in choosing $\thetak{d'}$ appropriately to achieve the desired dependence on the parameters and dimension. Recalling the quantity $\ak{d'}$ from Assumption \ref{assump:main} we set \begin{equation}\label{eqn:stat_moments3}
    \thetak{d'}_i = \frac{\sigmalow}{\ak{d'}}, \quad \quad 1 \le i \le d'.
\end{equation}
By definition of $\ak{d'}$, for each $1 \le i \le d'$,
\begin{align}
    \sum_{\ell=1}^{d'}(\rinvk{d'})_{i \ell}\thetak{d'}_{\ell} 
    &\le \sum_{\ell=1}^{d'}(\rinvk{d'})_{i \ell}\left(\frac{\bk{d'}_{i}\sigmalow}{\sum_{j=1}^{d'}(\rinvk{d'})_{i j}\sigma_j}\right) \nonumber \\ 
    &\le \sum_{\ell=1}^{d'}(\rinvk{d'})_{i \ell}\left(\frac{\bk{d'}_{i}\sigma_{\ell}}{\sum_{j=1}^{d'}(\rinvk{d'})_{i j}\sigma_j}\right) = \bk{d'}_{i},
\end{align}
and hence, $\thetak{d'}$ satisfies \eqref{eqn:stat_moments1}.

We now prove the exponential moments \eqref{eqn:stat_moment_result1} and \eqref{eqn:stat_moment_result1_bc}. Since we consider a fixed $d'$ here, we write $\bar{X}(\infty) = \bar{X}^{(d')}(\infty)$ to lighten notation.  Note Theorem \ref{thm:domination} implies $\bar{X}(\infty)$ stochastically dominates $X|_{d'}(\infty)$. Hence, since $(\rinvk{d'})_{ij} \ge 0$ we have for any $z \in \Reals^{d'}_+$,
\begin{equation} \label{eqn:stat_moments4}
 \Prob{\left(\rinvk{d'} X|_{d'}(\infty)\right)_i \le z_i\>, \quad 1 \le i \le d'}
    \ge \Prob{\left(\rinvk{d'} \bar{X}(\infty)\right)_i \le z_i\>, \quad 1 \le i \le d'}.
\end{equation}
For arbitrary $z_0 \ge 1$, setting $z_i = (\log z_0) \frac{A \ak{d'}}{2}$ for each $i = 1, \ldots, d'$ in \eqref{eqn:stat_moments4}, 
\begin{align}\label{eqn:stat_moments5}
   & \Prob{\exp \left(2\frac{\supnorm{X|_{d'}(\infty)} }{A \ak{d'}}\right)\le z_0}
    = \Prob{X_i(\infty) \le (\log z_0) \frac{A \ak{d'}}{2}\>, \quad 1 \le i \le d'} \notag\\
   & \ge \Prob{\left(\rinvk{d'} X|_{d'}(\infty)\right)_i \le (\log z_0) \frac{A \ak{d'}}{2}\>, \quad 1 \le i \le d'}\notag\\
   & \ge \Prob{\left(\rinvk{d'} X|_{d'}(\infty)\right)_i \le (\log z_0) \frac{A \ak{d'}}{2} \frac{\sum_{j = 1}^{d'}(\rinvk{d'})_{ij}}{\max_{1 \le k \le d'}\sum_{j = 1}^{d'}(\rinvk{d'})_{kj}}\>, \quad 1 \le i \le d'} \notag\\
   & \ge 1 - \sum_{j = 1}^{d'} \exp \left(- (\log z_0) \frac{A \sigmalow}{\sigma_j \max_{1 \le k \le d'}\sum_{j = 1}^{d'}(\rinvk{d'})_{kj}} \right)
    \ge 1 - d' \exp \left(- (\log z_0) \frac{A \sigmalow}{\sigmahigh M d'} \right).
\end{align}
We used in the second line $\rinvk{d'}x \ge x, \ \forall \ x \in \mathbb{R}^{d'}_+$. For the third inequality, we used \eqref{eqn:stat_moments2} with the $d'$-dimensional vector $(\log z_0) \frac{A \ak{d'}}{2}\left(\max_{1 \le k \le d'}\sum_{j = 1}^{d'}(\rinvk{d'})_{kj}\right)^{-1}(1,\dots,1)^T$ in place of $z$. For the last inequality, we used II and IV of Assumption \ref{assump:main}. From \eqref{eqn:stat_moments5} we obtain \eqref{eqn:stat_moment_result1} as follows:
\begin{align}\label{eqn:stat_moments6}
    \Expect{\exp\left(2 \frac{\supnorm{X|_{d'}(\infty)}}{A \sigmalow \ak{d'}} \right)}
    &\le 1 + \int_1^\infty \Prob{\exp \left(2\frac{\supnorm{X|_{d'}(\infty)} }{A \ak{d'}}\right) > z_0} \> dz_0 \notag\\ 
    &\le 1 + d' \int_1^\infty  \exp \left(- (\log z_0) \frac{A \sigmalow}{\sigmahigh M d'} \right) \le 1 + d',
\end{align}
for $A \ge 2 d' \frac{\sigmahigh M }{\sigmalow}.$ If instead Assumption \ref{assump:bounded_row} holds, then 
\begin{equation}
  \max_{1 \le k \le d'}\sum_{j = 1}^{d'}(\rinvk{d'})_{kj} \le \max_{1 \le k \le d'}\sum_{j = 1}^{d'}(\rinv)_{kj} \le M.  
\end{equation}
Instead of \eqref{eqn:stat_moments5} we have, 
\begin{align}\label{eqn:stat_moments5_bc}
    &\Prob{\exp \left(2\frac{\supnorm{X|_{d'}(\infty)} }{A \ak{d'}}\right)\le z_0}
    = \Prob{X_i(\infty) \le (\log z_0) \frac{A \ak{d'}}{2}\>, \quad 1 \le i \le d'} \notag \\
    &\ge 1 - \sum_{j = 1}^{d'} \exp \left(- (\log z_0) \frac{A \sigmalow}{\sigma_j \max_{1 \le k \le d'}\sum_{j = 1}^{d'}(\rinvk{d'})_{kj}} \right) \notag\\
    &\ge 1 - \sum_{j = 1}^{d'} \exp \left(- (\log z_0) \frac{A \sigmalow}{\sigma_j M} \right) 
    \ge 1 - d' \exp \left(- (\log z_0) \frac{A \sigmalow}{\sigmahigh M} \right).
\end{align}
This proves \eqref{eqn:stat_moment_result1_bc} by proceeding exactly as in \eqref{eqn:stat_moments6}, using \eqref{eqn:stat_moments5_bc} in place of \eqref{eqn:stat_moments5}. 

We turn to \eqref{eqn:stat_moment_result2}, recalling the notation $\bar{X}^{(k)}(\infty)$ from the start of this proof. By Theorem \ref{thm:domination}, $X_i(\infty) \le \bar{X}^{(k_0)}_i(\infty)$ for $i = 1, \ldots, k_0$ and $X_i(\infty) \le \bar{X}^{(i)}_i(\infty)$ for $i = k_0, \ldots, d$. This implies \begin{equation}\label{eqn:stat_moments7}
    \Prob{\bsupnorm{X(\infty)}{\sqrt{\delta}} > z_0} 
    \le
    \Prob{\bsupnorm{\bar{X}^{(k_0)}(\infty)}{\sqrt{\delta}} > z_0} + \sum_{i=k_0}^d \Prob{\delta^{i/2}\bar{X}^{(i)}_i(\infty) > z_0}.
\end{equation}
In preparation to handle the first probability of the right-hand side in \eqref{eqn:stat_moments7}, we note that by I, II of Assumption \ref{assump:main} and $(\rinvk{k_0})_{ij} \le (\rinv)_{ij}$,
\begin{align}\label{eqn:stat_moments8}
    \sum_{j = 1}^{k_0}(\rinvk{k_0})_{ij}\delta^{-j/2} 
    &\le M\sum_{j = 1}^{i}\delta^{-j/2} + \ind{i < k_0} C \sum_{j = i+1}^{k_0} \alpha^{j-i} \delta^{-j/2} \notag\\
    &\le \delta^{-i/2}\frac{M}{1 - \sqrt{\delta}}
    + \ind{i < k_0} C \delta^{-i/2}\sum_{j = 1}^{k_0-i} \left(\alpha / \sqrt{\delta} \right)^{j} \le C' \delta^{-i/2},
\end{align}
for $C' = \frac{M}{1-\sqrt{\delta}} + C \frac{\alpha/\sqrt{\delta}}{1-(\alpha/\sqrt{\delta})}$, recalling that $0 < \alpha < \sqrt{\delta} < 1$. In the following, we set $\thetak{i}_j = \frac{\sigmalow}{\ak{k_0}}$ for $k_0 \le i \le d, 1 \le j \le i$. Note that,
\begin{align}\label{eqn:stat_moments9}
    &\Prob{\bsupnorm{\bar{X}^{(k_0)}(\infty)}{\sqrt{\delta}} \le z_0} \nonumber\\
    &\ge \Prob{\left(\rinvk{k_0}\bar{X}^{(k_0)}(\infty)\right)_i \le z_0 \delta^{-i/2}, \quad i = 1, \ldots, k_0}\notag\\
    &\ge  \Prob{\left(\rinvk{k_0}\bar{X}^{(k_0)}(\infty)\right)_i \le z_0 \frac{\sum_{j=1}^{k_0}(\rinvk{k_0})_{ij}\delta^{-j/2}}{C'}, \quad i = 1, \ldots, k_0} \notag \\
    &\ge 1 - \sum_{j = 1}^{k_0} \exp \left(- 2z_0\delta^{-j/2}\frac{ \thetak{k_0}_j}{C'\sigma_j} \right) 
    \ge  1 - \sum_{j = 1}^{k_0} \exp \left(- 2z_0\delta^{-j/2}\frac{ \sigmalow}{\ak{k_0}C'\sigmahigh} \right).
\end{align}
In the first line we used $\left(\rinvk{k_0} \bar{X}^{(k_0)}_i(\infty)\right)_i \ge \bar{X}^{(k_0)}_i(\infty), \ i =1,\dots,k_0$. The second line uses \eqref{eqn:stat_moments8}. The last line applies \eqref{eqn:stat_moments2} with $k_0$ in place of $d'$ and with $z_j = (C')^{-1}z_0\delta^{-j/2}, \ 1 \le j \le k_0$, and uses IV of Assumption \ref{assump:main}. 

Now we bound $\Prob{\delta^{i/2}\bar{X}^{(i)}_i(\infty) > z_0}$ for $i = k_0,\dots,d$ required to bound the second term of the right hand side in \eqref{eqn:stat_moments7}. In the following equations we use II of Assumption \ref{assump:main}, which says $(\rinvk{i})_{kj} \le M$, in the third line and \eqref{eqn:stat_moments2} to show
\begin{align}\label{eqn:stat_moments10}
    \Prob{\delta^{i/2}\bar{X}^{(i)}_i(\infty) \le z_0} 
    &\ge \Prob{\left(\rinvk{i}\bar{X}^{(i)}(\infty)\right)_i \le z_0 \delta^{-i/2}}\notag\\
    &\ge \Prob{\left(\rinvk{i}\bar{X}^{(i)}(\infty)\right)_k \le z_0 \delta^{-i/2}, \quad k = 1, \ldots, i} \notag\\
    &\ge \Prob{\left(\rinvk{i}\bar{X}^{(i)}(\infty)\right)_k \le z_0 \delta^{-i/2} \frac{\sum_{j = 1}^i(\rinvk{i})_{kj}}{iM}, \quad k = 1, \ldots, i} \notag\\
    &\ge 1 - \sum_{j = 1}^{i} \exp \left(- 2z_0\delta^{-i/2}\frac{ \thetak{i}_j}{iM\sigma_j} \right)
    \ge 1 - i \exp \left(- 2z_0\delta^{-i/2}\frac{ \sigmalow}{i\ak{i}M\sigmahigh}\right),
\end{align}
for $i = k_0, \ldots, d$, where in the first line we used $\left(\rinvk{k_0} \bar{X}^{(i)}(\infty)\right)_i \ge \bar{X}^{(i)}_i(\infty)$. Applying \eqref{eqn:stat_moments9}, \eqref{eqn:stat_moments10} to \eqref{eqn:stat_moments7} we get
\begin{align}\label{eqn:stat_moments11}
    \Prob{\bsupnorm{X(\infty)}{\sqrt{\delta}} > z_0}  
    &\le \sum_{i = 1}^{k_0} \exp \left(- 2z_0\delta^{-i/2}\frac{ \sigmalow}{\ak{k_0}C'\sigmahigh} \right) \nonumber \\
    &+ \sum_{i = k_0}^d i \exp \left(- 2z_0\delta^{-i/2}\frac{ \sigmalow}{i\ak{i}M\sigmahigh}\right).
\end{align}
As a result,
\begin{align}\label{eqn:stat_moments12}
    &\Expect{\bsupnorm{X(\infty)}{\sqrt{\delta}}} = \int_0^\infty \Prob{\bsupnorm{X(\infty)}{\sqrt{\delta}} > z_0}\> dz_0 \notag\\
    &\le \ak{k_0}\frac{\sigmahigh C'}{2\sigmalow} \sum_{i = 1}^{k_0} \delta^{i/2} + \frac{M \sigmahigh}{2\sigmalow}\sum_{i = k_0}^d i^2 \ak{i} \delta^{i/2}
    \le \ak{k_0}\frac{\sigmahigh C'\sqrt{\delta}}{2\sigmalow(1-\sqrt{\delta})}  + \frac{M \sigmahigh}{2\sigmalow}\sum_{i = k_0}^d i^2 \ak{i} \delta^{i/2}\notag\\
    &\le k_0^{r^* + 1} \frac{MC'\sigmahigh^2 \sqrt{\delta}}{2b_0\sigmalow(1-\sqrt{\delta})} + \frac{M^2 \sigmahigh^2}{2b_0\sigmalow}\sum_{i = k_0}^d i^{3 + r^*} \delta^{i/2} 
    \le C'' \left(k_0^{r^* + 1} +  \sum_{i = k_0}^d i^{3 + r^*} \delta^{i/2} \right),
\end{align}
with $C'' = \frac{M^2 \sigmahigh^2}{2b_0\sigmalow} \vee \frac{MC'\sigmahigh^2 \sqrt{\delta}}{2b_0\sigmalow(1-\sqrt{\delta})}$. In the final line we used the fact that $\ak{i} \le \frac{iM\sigmahigh}{\blowk{i}}$ by definition of $\ak{i}$ and Assumption \ref{assump:main}, and $\blowk{i} \ge b_0 i^{-r^*}$ using III of Assumption \ref{assump:main}. If instead Assumption \ref{assump:bounded_row} holds, $\ak{i} \le \frac{M\sigmahigh}{\blowk{i}} \le i^{r^*}\frac{M\sigmahigh}{b_0} $. Substituting this fact in the final line of \eqref{eqn:stat_moments12}, but otherwise proceeding in exactly the same way, produces \eqref{eqn:stat_moment_result2_bc} with the same choice of $C''$.

Now we show \eqref{eqn:stat_moment_result3} and \eqref{eqn:stat_moment_result3_bc}. We need prove only the second inequality in \eqref{eqn:stat_moment_result3}, \eqref{eqn:stat_moment_result3_bc}. Using Jensen's inequality in the first line below, we have 
\begin{align}\label{eqn:stat_moments13}
    u^2(X(\infty), 0) 
    &= \left(\sum_{i=1}^d\beta^i \sum_{j=1}^d(\rinv)_{ij}(\infty) X_j(\infty)\right)^2
    \le \frac{\beta}{1-\beta} \sum_{i=1}^d\beta^i \left( \sum_{j=1}^d(\rinv)_{ij}(\infty) X_j(\infty)\right)^2 \notag\\
    &\le \bsupnorm{X(\infty)}{\sqrt{\delta}}^2 \frac{\beta}{1-\beta} \sum_{i=1}^d\beta^i \left( \sum_{j=1}^d(\rinv)_{ij}(\infty) \delta^{-j/2}\right)^2 \notag\\
    &\le \bsupnorm{X(\infty)}{\sqrt{\delta}}^2 \frac{(C')^2\beta}{1-\beta} \sum_{i=1}^d\beta^i \delta^{-i} 
    \le \bsupnorm{X(\infty)}{\sqrt{\delta}}^2 \frac{(C')^2\beta}{1-\beta} \frac{\beta / \delta}{1 - \beta/\delta}.
\end{align}
In the second line, we used \eqref{eqn:stat_moments8} with $d$ in place of $k_0$ and $C'$ set therein. For the final line recall $\beta \in (\alpha, \delta)$. Using \eqref{eqn:stat_moments11} to bound the quadratic moment on the right-hand side of \eqref{eqn:stat_moments13},
\begin{align}\label{eqn:stat_moments14}
    \Expect{u^2(X(\infty), 0)} 
    &\le \frac{(C')^2\beta}{1-\beta} \frac{\beta / \delta}{1 - \beta/\delta} \> \Expect{\bsupnorm{X(\infty)}{\sqrt{\delta}}^2} \notag\\
    &= \frac{(C')^2\beta}{1-\beta} \frac{\beta / \delta}{1 - \beta/\delta}\> \int_0^\infty \Prob{\bsupnorm{X(\infty)}{\sqrt{\delta}} > \sqrt{z_0}} \> dz_0 \notag\\
    &\le \frac{(C')^2\beta}{1-\beta} \frac{\beta / \delta}{1 - \beta/\delta}\>   \int_0^\infty \sum_{i = 1}^{k_0} \exp \left(- 2\sqrt{z_0}\delta^{-i/2}\frac{ \sigmalow}{\ak{k_0}C'\sigmahigh} \right) dz_0 \notag \\
    &+ \frac{(C')^2\beta}{1-\beta} \frac{\beta / \delta}{1 - \beta/\delta}\>   \int_0^\infty \sum_{i = k_0}^d i \exp \left(- 2\sqrt{z_0}\delta^{-i/2}\frac{ \sigmalow}{i\ak{i}M\sigmahigh}\right) dz_0 \notag \\
    &= 2 \frac{(C')^2\beta}{1-\beta} \frac{\beta / \delta}{1 - \beta/\delta} \> 
    \left(\left(\frac{\ak{k_0}C'\sigmahigh}{2\sigmalow}\right)^2 \sum_{i=1}^{k_0}  \delta^i 
    + \left(\frac{M\sigmahigh}{2\sigmalow}\right)^2\sum_{i=k_0}^{d} \delta^i i^3  \left(\ak{i}\right)^2 \right).
\end{align}
Under Assumption \ref{assump:main} we have $\ak{i} \le \frac{iM\sigmahigh}{\blowk{i}} \le i^{1+r^*}\frac{M \sigmahigh}{b_0}$, and applying this to \eqref{eqn:stat_moments14} gives
\begin{equation}\label{eqn:stat_moments15}
    \Expect{u^2(X(\infty), 0)}  \le (C''')^2 \left(k_0^{2(r^* + 1)} +  \sum_{i = k_0}^d i^{5 + 2r^*} \delta^{i} \right),
\end{equation}
where we have chosen $C''' \ge C''$ to be large enough that both \eqref{eqn:stat_moments15} and \eqref{eqn:stat_moments12} are satisfied,
\begin{equation}\label{eqn:stat_moments16}
    C''' = C'' \vee  \sqrt{\frac{(C')^2\beta}{2(1-\beta)} \frac{\beta / \delta}{1 - \beta/\delta} \left(\frac{M \sigmahigh}{b_0} \right)^2\left(\left(\frac{C'\sigmahigh}{\sigmalow}\right)^2\frac{\delta}{1-\delta} + \left(\frac{M\sigmahigh}{\sigmalow}\right)^2\right)}.
\end{equation}
Under Assumption \ref{assump:bounded_row} we have $\ak{i} \le \frac{M\sigmahigh}{\blowk{i}} \le i^{r^*}\frac{M \sigmahigh}{b_0}$, so by \eqref{eqn:stat_moments14}
\begin{equation}\label{eqn:stat_moments17}
    \Expect{u^2(X(\infty), 0)}  \le (C''')^2 \left(k_0^{2r^*} +  \sum_{i = k_0}^d i^{3 + 2r^*} \delta^{i} \right).
\end{equation}
After taking square roots and using $\sum_1^m x_i^2 \le \left(\sum_1^m x_i\right)^2$ for any non-negative numbers $x_1 \ldots x_m$, \eqref{eqn:stat_moments15} proves \eqref{eqn:stat_moment_result3} and \eqref{eqn:stat_moments17} proves \eqref{eqn:stat_moment_result3_bc}.
\end{proof}

Now we bound $\Expect{u_\pi(t)}$. We would like simply to use Lemma \ref{lemma:convergence_mean_u} conditional on $x = X(\infty)$ followed by taking expectation in the law of $X(\infty)$. We will do so to prove \eqref{eqn:main_result_bounded_row} under Assumption \ref{assump:bounded_row}, but this is not desirable under Assumption \ref{assump:main} for the following reason.

If one tries this approach under Assumption \ref{assump:main}, terms of the form $\Expect{\exp\left(2 \frac{\supnorm{X|_{\ell(t)}(\infty)}}{A\sigmalow\ak{\ell(t)}} \right)}$ (where $\ell(\cdot)$ is defined in \eqref{eqn:ellt}) appear in the bound and $A$ should be chosen large enough so that this expectation is finite. Lemma \ref{lemma:stat_moments} shows this requires $A$ to be of order $\ell(t)$. However, such choice of $A$ implies that $e^{-\frac{C_0}{A} t^{1/(3+2r^*)}}$ is bounded below by a positive dimension independent constant as $t \rightarrow \infty$, thereby lending the bounds obtained via Lemma \ref{lemma:convergence_mean_u} trivial. 

Thus, under Assumption \ref{assump:main}, we proceed by choosing a higher number of coordinates of $X(x,\cdot)$ that must hit zero in order to achieve a desirable contraction in $\Expect{u_\pi(\cdot)}$. Namely, instead of $\ell(\cdot)$ of Lemma \ref{lemma:convergence_mean_u}, we define
\begin{equation}\label{eqn:dt}
    d(t) = \begin{cases}
    d \wedge  \floor{t^{1/(4+2r^*)}} &\quad \text{under Assumption } \ref{assump:main},\\
    d \wedge  \floor{t^{1/(1+2r^*)}} &\quad \text{under Assumption } \ref{assump:bounded_row},\\
    \end{cases}
\end{equation}
with $r^* \ge 0$ as in III of Assumption \ref{assump:main}.

\begin{lemma}[Decay rate of $\Expect{u_{\pi}(\cdot)}$]\label{statconv}
Suppose Assumption \ref{assump:main} holds for $X$, an $\RBM(\Sigma, \mu, R)$, with $\alpha \in (0, 1)$ defined therein. Fix $\beta \in (\alpha, 1)$, $\delta \in (\beta,1)$, and recall the weighted distance $u_\pi(\cdot)$ from \eqref{eqn:u_pi}. Recall $L_1(\delta), L_2(\delta)$ from Lemma \ref{lemma:stat_moments}.

There exist constants $\bar{C}_0, \bar{C}_1, C_0' > 0$ not depending on $d$ or $r^*$ such that, with $k_0'' = k_0''(r^*) = \max\left\lbrace k_0, \frac{A_0\sigmalow}{2\sigmahigh M}, \left(\frac{8(4+2r^*)}{C_0' e} \right)^2\right\rbrace$ ($A_0$ defined in Lemma \ref{lemma:boundary_hits}), we have for $d > k_0''$
\begin{equation}\label{eqn:main_result}
    \Expect{u_\pi(t)}
    \le 
    \begin{cases}
    \bar{C}_1 L_1(\delta)\sqrt{1+d(t)} \> e^{-\bar{C}_0 t^{1/(4+2r^*)}} + \bar{C}_1 L_1(\delta) \> e^{-\bar{C}_0\frac{t^{1/(2+r^*)}}{\log t}}, \quad \quad k_0'' \le d(t) < d,\\
    \bar{C}_1 L_1(\delta)\sqrt{1+d}\> e^{-\bar{C}_0\frac{t}{d^{3 + 2r^*}}} + \bar{C}_1 L_1(\delta) e^{-\bar{C}_0 \frac{t}{d^{2(1+r^*)}\log d}},
    \quad \quad d(t) = d.
    \end{cases}
\end{equation}
If instead Assumption \ref{assump:bounded_row} holds, retaining $k_0'', \bar{C}_1, \bar{C}_0$ but switching $d(t)$ according to \eqref{eqn:dt}, we have
\begin{equation}\label{eqn:main_result_bounded_row}
    \Expect{u_\pi(t)}
    \le 
    \begin{cases}
    \bar{C}_1 L_2(\delta)\sqrt{1+d(t)} \> e^{-\bar{C}_0 t^{1/(1+2r^*)}} + \bar{C}_1 L_2(\delta) \> e^{-\bar{C}_0 \frac{t^{1/(1+2r^*)}}{\log t}}, \quad \quad k_0'' \le d(t) < d,\\
    \bar{C}_1 L_2(\delta)\sqrt{1+d}\> e^{-\bar{C}_0\frac{t}{d^{2r^*}}} + \bar{C}_1 L_2(\delta) e^{-\bar{C}_0 \frac{t}{d^{2r^*}\log d}},
    \quad \quad d(t) = d.
    \end{cases}
\end{equation}
\end{lemma}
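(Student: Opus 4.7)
The plan is to adapt the argument of Lemma~\ref{lemma:convergence_mean_u} but with the slower-growing threshold $d(t)$ of \eqref{eqn:dt} in place of $\ell(t)$, then integrate over the stationary distribution of $X(\infty)$ using the moment bounds of Lemma~\ref{lemma:stat_moments}. The reason $d(t)$ must grow more slowly (exponent $1/(4+2r^*)$ rather than $1/(3+2r^*)$ under Assumption~\ref{assump:main}) is precisely that we now need the exponential moment $\Expect{\exp\left(2\supnorm{X|_{d(t)}(\infty)}/(A\sigmalow\ak{d(t)})\right)}$ to remain controlled while still producing a meaningful contraction rate.

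First, conditionally on $X(\infty)=x$, Lemma~\ref{lemma:convergence_intermediate} with $d'=d(t)$ gives an upper bound for $\Expect{u(X(\infty),t)\mid X(\infty)=x}$ in terms of $u(x,0)$, $\bsupnorm{x}{\delta}$, and the cross term $u(x,0)\exp\left(\supnorm{x|_{d(t)}}/(A\sigmalow\ak{d(t)})\right)$. Taking expectation over $X(\infty)$, the first two contributions are directly controlled by \eqref{eqn:stat_moment_result2}--\eqref{eqn:stat_moment_result3} (or their bounded-row-sum analogues), producing the $L_1(\delta)$ or $L_2(\delta)$ prefactor. The cross term I would split via Cauchy--Schwarz as
\begin{equation*}
\Expect{u(X(\infty),0)\, e^{\frac{\supnorm{X|_{d(t)}(\infty)}}{A\sigmalow\ak{d(t)}}}} \le \sqrt{\Expect{u^2(X(\infty),0)}}\, \sqrt{\Expect{e^{\frac{2\supnorm{X|_{d(t)}(\infty)}}{A\sigmalow\ak{d(t)}}}}},
\end{equation*}
so that both factors fall under Lemma~\ref{lemma:stat_moments}.

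The key calibration is then to take $A = 2d(t)\sigmahigh M/\sigmalow$ under Assumption~\ref{assump:main}, which by \eqref{eqn:stat_moment_result1} bounds the exponential moment by $1+d(t)$ and produces the $\sqrt{1+d(t)}$ factor in \eqref{eqn:main_result}. Combined with $\ak{d(t)}\lesssim d(t)^{1+r^*}$ from \eqref{eqn:convergence_mean_u7}, the dominant contraction exponent becomes $tC''\Lambdak{d(t)}/A \gtrsim t/d(t)^{3+2r^*}$. Setting $d(t)=\floor{t^{1/(4+2r^*)}}$ balances this against the residual term $(\beta/\delta)^{d(t)+1}$ (both yielding $\exp(-\bar{C}_0 t^{1/(4+2r^*)})$), while the $\Tk{d(t)}$-based contraction and $\lambda(t)$ produce the secondary rate $\exp(-\bar{C}_0 t^{1/(2+r^*)}/\log t)$ via precisely the computation in \eqref{eqn:convergence_mean_u8}--\eqref{eqn:convergence_mean_u10}. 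Under Assumption~\ref{assump:bounded_row}, \eqref{eqn:stat_moment_result1_bc} permits a dimension-free choice of $A$, removing one factor of $d(t)$ from the denominator and allowing the faster scaling $d(t)=\floor{t^{1/(1+2r^*)}}$.

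The case $d(t)=d$ (i.e.\ $t \ge d^{4+2r^*}$ under Assumption~\ref{assump:main}) is handled analogously with $d'=d$ throughout, mirroring the second case of Lemma~\ref{lemma:convergence_mean_u} but with the additional $\sqrt{1+d}$ contribution from the stationary exponential moment. The main obstacle is the simultaneous balancing in the choice of $A$: it must be large enough (proportional to $d(t)$) for \eqref{eqn:stat_moment_result1} to apply, yet small enough that $tC''\Lambdak{d(t)}/A$ still grows like a positive power of $t$. Solving $1 - \gamma(3+2r^*) = \gamma$ for $d(t) = t^\gamma$ yields $\gamma = 1/(4+2r^*)$, which explains the particular exponent appearing in \eqref{eqn:dt}. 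The admissibility condition $t \ge 4\Tk{d(t)}/\delta'$ of Lemma~\ref{lemma:convergence_intermediate} and the lower bound $A \ge A_0$ are both verified for $d(t) \ge k_0''(r^*)$ by the same elementary argument used after \eqref{eqn:convergence_mean_u10}, together with the definition of $k_0''$ in the statement.
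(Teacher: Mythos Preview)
Your proposal is correct and follows essentially the same approach as the paper: apply Lemma~\ref{lemma:convergence_intermediate} with $d'=d(t)$ and the inflated choice $A=2d(t)\sigmahigh M/\sigmalow$, then integrate over $X(\infty)$ using Cauchy--Schwarz together with the moment bounds of Lemma~\ref{lemma:stat_moments}. The only cosmetic difference is that the paper handles the term $u(x,0)\bigl[1+\exp(\cdot)\bigr]$ via the inequality $(1+e^z)^2\le 4e^{2z}$ before Cauchy--Schwarz, and under Assumption~\ref{assump:bounded_row} it simply invokes Lemma~\ref{lemma:convergence_mean_u} directly with a dimension-free $A$ rather than redoing the estimate; your version is equivalent.
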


\begin{proof}
The proof technique is similar to that of Lemma \ref{lemma:convergence_mean_u}, so we merely sketch the common parts of the argument. 

Suppose Assumption \ref{assump:main} holds, and recall \eqref{eqn:convergence_mean_u7} holds for arbitrary $d' \in \{k_0'', \ldots d\}$. Set $d' = d(t)$, with $d(t)$ as in \eqref{eqn:dt}, and consider first the case $d' < d$.  Setting $A = 2d'\frac{\sigmahigh M}{\sigmalow}\ge A_0$ (by choice of $k_0''$), we apply \eqref{eqn:convergence_mean_u7} exactly as in \eqref{eqn:convergence_mean_u8} to show,
\begin{eqnarray}\label{eqn:main_proof1}
-t \frac{\Lambdak{d'}}{A} &=& -t\frac{1}{2d' \frac{\sigmahigh M}{\sigmalow}(\ak{d'})^2} \le -t\frac{b_0^2 \sigmalow}{2(d')^{1 + 2(1+r^*)} \left(\sigmahigh M\right)^3}
 \le -C_0' t^{1/(4+2r^*)}, \nonumber \\
-t\frac{\delta'}{\Tk{d'}} &\le& -t\frac{\delta'}{1 +   (d')^{2(1+r^*)}\left(\frac{\sigmahigh M }{b_0}\right)^2\> \log\left(2 d' \right)} \le -C_0'\frac{t^{1/(2+r^*)}}{\log t} ,
\end{eqnarray}
for a constant $C'_0 > 0$ that does not depend on $d, d', r^*$. We note the discrepancy of orders in the first and second line of \eqref{eqn:main_proof1} comes from the extra $d'$-dependence in the first term, which was not present in \eqref{eqn:convergence_mean_u8}.

Fix $x \in \mathbb{R}^d_+$. Arguments preceding \eqref{eqn:convergence_mean_u11} remain valid here: Apply Lemma \ref{lemma:convergence_intermediate} with $A = 2d'\frac{\sigmahigh M}{\sigmalow}$, using \eqref{eqn:main_proof1} instead of \eqref{eqn:convergence_mean_u8}, to obtain
\begin{align}\label{eqn:main_proof2}
    \Expect{u(x, t)}
    \le u(x, 0)\left[1 + \exp \left(\frac{\supnorm{x|_{\d(t)}}}{2d(t)\sigmahigh M \ak{\d(t)}} \right) \right] \> e^{-C'' C_0' t^{1/(4+2r^*)}} \nonumber \\
    + u(x, 0)e^{-C'' C_0'\frac{t^{1/(2+r^*)}}{\log t}} 
    + u(x, 0) \, \lambda^{\frac{C_0'}{4}\frac{t^{1/(2+r^*)}}{\log t} - 1} +  C'\bsupnorm{x}{\delta}\>(\beta/\delta)^{t^{1/(4+2r^*)}},
\end{align}
for $k_0'' \le \d(t) < d.$ As in the proof of \eqref{eqn:convergence_mean_u11}, $d(t) \ge \left(\frac{8(4+2r^*)}{C_0' e}\right)^2$ implies $t \ge \frac{4\Tk{d(t)}}{\delta}$.

Applying \eqref{eqn:main_proof2} conditional on $x = X(\infty)$, taking expectations and applying Lemma \ref{lemma:stat_moments} to bound the expectations of associated functionals of $X(\infty)$ produces
\begin{align}\label{eqn:main_proof3}
  &\Expect{u_\pi(t)} = \Expect{u(X(\infty), t)} \notag\\
    &\le \Expect{u(X(\infty), 0)\left[1 + \exp \left(\frac{\supnorm{X|_{\d(t)}(\infty)}}{2d(t)\sigmahigh M \ak{\d(t)}} \right) \right]} \> e^{-C'' C_0' t^{1/(4+2r^*)}} \notag\\
    &+ \Expect{u(X(\infty), 0)}e^{-C'' C_0'\frac{t^{1/(2+r^*)}}{\log t}} 
    + \Expect{u(X(\infty), 0)} \, \lambda^{\frac{C_0'}{4}\frac{t^{1/(2+r^*)}}{\log t} - 1} \notag\\
    &+  C'\Expect{\bsupnorm{X(\infty)}{\delta}}\>(\beta/\delta)^{t^{1/(4+2r^*)}} \notag \\
    &\le \Expect{u(X(\infty), 0)\left[1 + \exp \left(\frac{\supnorm{X|_{\d(t)}(\infty)}}{2d(t)\sigmahigh M \ak{\d(t)}} \right) \right]} \> e^{-C'' C_0' t^{1/(4+2r^*)}} \notag\\
    &\qquad + C''' L_1(\delta) \left( e^{-C'' C_0'\frac{t^{1/(2+r^*)}}{\log t}}
    + \lambda^{\frac{C_0'}{4}\frac{t^{1/(2+r^*)}}{\log t} - 1} +  C'\>(\beta/\delta)^{t^{1/(4+2r^*)}} \right) \notag\\
   & \le 2 C''' L_1(\delta)\sqrt{1 + d(t)}\> e^{-C'' C_0' t^{1/(4+2r^*)}} \notag\\
    &\qquad + C''' L_1(\delta) \left( e^{-C'' C_0'\frac{t^{1/(2+r^*)}}{\log t}}
    + \lambda^{\frac{C_0'}{4}\frac{t^{1/(2+r^*)}}{\log t} - 1} +  C'\>(\beta/\delta)^{t^{1/(4+2r^*)}} \right),
\end{align}
for $k_0'' \le \d(t) < d,$, where $L_1(\delta)$ is defined in Lemma \ref{lemma:stat_moments}. The second inequality above follows from \eqref{eqn:stat_moment_result2} and \eqref{eqn:stat_moment_result3}. In the final line we used the Cauchy-Schwarz inequality, the observation that $(1 + e^z)^2 \le 4 e^{2z}$ for $z \ge 0$, and \eqref{eqn:stat_moment_result1} and \eqref{eqn:stat_moment_result3}. This proves the first line in \eqref{eqn:main_result}, with
\begin{equation}\label{eqn:main_proof4}
    \bar{C}_1 = C''' \left( (2 + C') \vee (1 + \lambda^{-1})\right)
\end{equation}
and 
\begin{equation}\label{eqn:main_proof5}
    \bar{C}_0 = C''C_0' \wedge \frac{C'_0}{4}\log \frac{1}{\lambda}  \wedge \log \frac{\delta}{\beta}.
\end{equation}
We now consider $d(t) = d$, which implies $t \ge d^{4+2r^*}$. Setting $A = 2d\frac{\sigmahigh M}{\sigmalow} \ge A_0$, once again we use \eqref{eqn:convergence_mean_u7} with $d' = d$ to show,
\begin{eqnarray}\label{eqn:main_proof6}
-t \frac{\Lambdak{d'}}{A}  &=& -t\frac{1}{2d \frac{\sigmahigh M}{\sigmalow}(\ak{d})^2} \le -t\frac{b_0^2 \sigmalow}{2d^{1 + 2(1+r^*)} \left(\sigmahigh M\right)^3} \le -C_0' \frac{t}{d^{1 + 2(1+r^*)}}, \nonumber \\
-t\frac{\delta'}{\Tk{d}} &\le& -t\frac{\delta'}{1 +   d^{2(1+r^*)}\left(\frac{\sigmahigh M }{b_0}\right)^2\> \log\left(2 d \right)} \le -C_0' \frac{t}{d^{2(1+r^*)} \log d}.
\end{eqnarray}
The second line of \eqref{eqn:main_result} now follows using Lemma \ref{lemma:convergence_intermediate} and Lemma \ref{lemma:stat_moments} with $A = 2d\frac{\sigmahigh M}{\sigmalow}$ via calculations exactly like \eqref{eqn:main_proof3}, using \eqref{eqn:main_proof6} instead of \eqref{eqn:main_proof1}.

To prove \eqref{eqn:main_result_bounded_row}, i.e. supposing Assumption \ref{assump:bounded_row} holds, we simply use Lemma \ref{lemma:convergence_mean_u}: Set $x = X(\infty)$ and $A = \max\left\lbrace2\frac{\sigmahigh M}{\sigmalow},A_0\right\rbrace$ in \eqref{eqn:convergence_meanu_result_bounded_row} then take expectations with respect to $X(\infty)$. Result \eqref{eqn:main_result_bounded_row} now follows in a manner perfectly analogous to \eqref{eqn:main_proof3}, using \eqref{eqn:stat_moment_result1_bc}, \eqref{eqn:stat_moment_result2_bc}, \eqref{eqn:stat_moment_result3_bc} instead of \eqref{eqn:stat_moment_result1}, \eqref{eqn:stat_moment_result2}, \eqref{eqn:stat_moment_result3}.
\end{proof}

With Lemma \ref{lemma:convergence_mean_u} and Lemma \ref{statconv} in hand, we are now ready to prove Theorem \ref{thm:main_fromx} via \eqref{eqn:main_fromx1}.
\begin{proof}[Proof of Theorem \ref{thm:main_fromx}]  
Fix any $\beta \in (\alpha, 1)$ and $\delta \in (\beta,1)$. Fix $B \in (0,\infty)$ and fix any $x\in \mathcal{S}(b,B)$. First we consider the case in which Assumption \ref{assump:main} holds. Since $d(t)$ of \eqref{eqn:main_result} differs slightly from $\ell(t)$ of Lemma \ref{lemma:convergence_mean_u}, we must take a little care to match the convergence rates appropriately.

Recall $\ell(t)$ of Lemma \ref{lemma:convergence_mean_u} is given as $\ell(t) = d\wedge \floor{t^{1/(3+2r^*)}}$. Recall from the statement of that lemma the term $k_0' = k_0'(r^*) = k_0 \vee \left(\frac{8(3+2r^*)}{C_0'e}\right)^2$. Then $\left(k_0 \vee \left(\frac{8(3+2r^*)}{C_0'e}\right)^2 + 1\right)^{3+2r^*} \le t < d^{3+2r^*}$ implies $k_0' \le \ell(t) < d$. As a result we have directly from the first line of \eqref{eqn:convergence_meanu_result_main}, using $A=A_0$ ($A_0$ defined in Lemma \ref{lemma:boundary_hits})
\begin{align}\label{eqn:main_fromx2}
    \Expect{u(x, t)} &\le C_1 \left(u(x, 0)e^{\frac{\supnorm{x|_{\ell(t)}}}{A_0\sigmalow\ak{\ell(t)}}} + \bsupnorm{x}{\delta}\right) \> e^{-\frac{C_0}{A_0} t^{1/(3+2r^*)}}
    + C_1u(x, 0)\> e^{-C_0 \frac{t^{1/(3+2r^*)}}{\log t}}\notag\\
   & \le C'_1 \supnorm{x} \left(e^{\frac{\supnorm{x|_{\ell(t)}}}{A_0\sigmalow\ak{\ell(t)}}} + 1\right) \> e^{-\frac{C_0}{A_0}t^{1/(3 + 2r^*)}}
    + C'_1 \supnorm{x}\> e^{-C_0 \frac{t^{1/(3 + 2r^*)}}{ \log t}} \notag\\
   & \le C'_1 \supnorm{x} \left(e^{B/\sigmalow^2} + 1\right) \> e^{-\frac{C_0}{A_0}t^{1/(3 + 2r^*)}}
    + C'_1 \supnorm{x}\> e^{-C_0 \frac{t^{1/(3 + 2r^*)}}{ \log t}},
\end{align}
for $\left(k_0 \vee \left(\frac{8(3+2r^*)}{C_0'e} \right)^2 + 1\right)^{3+2r^*} \le t < d^{3+2r^*}$, where $C'_1$ is a constant not depending on $d, x$. The second-last line above follows from the observation $u(x, 0) \le \frac{\beta}{1-\beta} \supnorm{x}$. In the final line we have used $A_0 \ge 1$, $x \in \mathcal{S}(b,B)$ and $\sum_{j = 1}^{\ell(t)} (\rinvk{\ell(t)})_{ij} \ge 1$ for $1 \le i \le \ell(t)$ to get from the definition of $\ak{k}$,
$$
\frac{\supnorm{x|_{\ell(t)}}}{A_0\sigmalow\ak{\ell(t)}} \le \sigmalow^{-2} \> \blow^{(\ell(t))}\supnorm{x|_{\ell(t)}}  \le \sigmalow^{-2}B.
$$

Now recall $d(t) = d \wedge \floor{t^{1/(4 + 2r^*)}}$, which by definition gives $d(t) \le \ell(t)$. Therefore, recalling $k_0''$ from Lemma \ref{statconv}, $\left(k_0''^2 + 1\right)^{4+2r^*} \le t < d^{3+2r^*}$ implies $k_0 \vee \left(\frac{8(4+2r^*)}{C_0'e}\right)^2 \le d(t) \le \ell(t) < d$. Hence, we combine \eqref{eqn:main_fromx1}, \eqref{eqn:main_fromx2} with the first line of \eqref{eqn:main_result} to obtain
\begin{align}\label{eqn:main_fromx3}
    &\Expect{\bnorm{\left(X(x, t) - X(X(\infty), t)\right)}} \notag\\
    &\qquad\le \Expect{u_\pi(t)}
    + C'_1 \supnorm{x} \left(e^{B/\sigmalow^2} + 1\right) \> e^{-\frac{C_0}{A_0}t^{1/(3 + 2r^*)}}
    + C'_1 \supnorm{x}\> e^{-C_0 \frac{t^{1/(3 + 2r^*)}}{ \log t}} \notag\\
    &\qquad\le \bar{C}_1 L_1(\delta)\sqrt{1+t^{1/(4+2r^*)}} \> e^{-\bar{C}_0 t^{1/(4+2r^*)} } + \bar{C}_1 L_1(\delta) \> e^{-\bar{C}_0\frac{t^{1/(2+r^*)}}{\log t}} \notag\\
    &\qquad\qquad+ C'_1 \supnorm{x} \left(e^{B/\sigmalow^2} + 1\right) \> e^{-\frac{C_0}{A_0}t^{1/(3 + 2r^*)}}
    + C'_1 \supnorm{x}\> e^{-C_0 \frac{t^{1/(3 + 2r^*)}}{ \log t}},
\end{align}
for $\left(k_0''^2 + 1\right)^{4+2r^*} \le t < d^{3+2r^*}$. Now if $d^{3+2r^*} \le t < d^{4+2r^*}$ the bound on $\Expect{u_\pi(t)}$ in the first line of \eqref{eqn:main_result} continues to hold, and the bound on $\Expect{u(x, t)}$ from the second line of \eqref{eqn:convergence_meanu_result_main} is now valid. Thus, we have
\begin{align}\label{eqn:main_fromx4}
    &\Expect{\bnorm{\left(X(x, t) - X(X(\infty), t)\right)}} \notag \\
    &\qquad \le \bar{C}_1 L_1(\delta)\sqrt{1+t^{1/(4+2r^*)}} \> e^{-\bar{C}_0 t^{1/(4+2r^*)} } + \bar{C}_1 L_1(\delta) \> e^{-\bar{C}_0\frac{t^{1/(2+r^*)}}{\log t}} \notag\\
    &\qquad\qquad + C'_1 \supnorm{x} \left(e^{B/\sigmalow^2} + 1\right) \> e^{-\frac{C_0}{A_0}\frac{t}{d^{2(1+r^*)}}} 
    + C'_1 \supnorm{x}\> e^{-C_0 \frac{t}{d^{2(1+r^*)} \log d}} \notag\\
    &\qquad \le \bar{C}_1 L_1(\delta)\sqrt{1+t^{1/(4+2r^*)}} \> e^{-\bar{C}_0 t^{1/(4+2r^*)} } + \bar{C}_1 L_1(\delta) \> e^{-\bar{C}_0\frac{t^{1/(2+r^*)}}{\log t}} \notag\\
    &\qquad\qquad + C'_1 \supnorm{x} \left(e^{B/\sigmalow^2} + 1\right) \> e^{-\frac{C_0}{A_0}t^{1/(3+2r^*)} } 
    + C'_1 \supnorm{x}\> e^{-3 C_0 \frac{t^{1/(3+2r^*)} }{ \log t}},
\end{align}
for $d^{3+2r^*} \le t < d^{4+2r^*}$, where the final inequality follows from $\frac{t}{d^{2(1+r^*)}} \ge  t^{1/(3+2r^*)}$ and $\log d \le \frac{\log t}{3 + 2r^*} \le \frac{\log t}{3}$. The first line in \eqref{eqn:main_fromx_result1} follows from \eqref{eqn:main_fromx3}, \eqref{eqn:main_fromx4} by taking $\beta = \sqrt{\alpha}$ and $\delta = \alpha^{1/4}$ after keeping only leading-order terms in the above bounds, for simplicity.

To prove the second line in \eqref{eqn:main_fromx_result1}: Note the second lines of \eqref{eqn:main_result} and \eqref{eqn:convergence_meanu_result_main} remain valid for all $t \ge d^{4 + 2r(d)}$. Applying those results to \eqref{eqn:main_fromx1} and otherwise proceeding as in the lead-up to \eqref{eqn:main_fromx4}
\begin{align}\label{eqn:main_fromx5}
    &\Expect{\bnorm{\left(X(x, t) - X(X(\infty), t)\right)}} \notag\\
    & \qquad\le C_1 \supnorm{x}e^{\frac{\supnorm{x}}{A_0\sigmalow\ak{d}}} \> e^{-\frac{C_0}{A_0}\frac{t}{d^{2(1+r^*)}}} 
    + C_1 \supnorm{x}e^{- C_0 \frac{t}{d^{2(1+r^*)}\log d}} \notag\\
    &\qquad\qquad + C_1 L_1(\delta)\sqrt{1+d}\> e^{-\bar{C}_0\frac{t}{d^{3 + 2r^*}}}
    + C_1 L_1(\delta) e^{-C_0 \frac{t}{d^{2(1+r^*)}\log d}} \notag\\
    & \qquad \le C_1 \supnorm{x}e^{B/\sigmalow^2} \> e^{-\frac{C_0}{A_0}\frac{t}{ d^{2(1+r^*)}}}
    + C_1 \supnorm{x}e^{- C_0 \frac{t}{d^{2(1+r^*)}\log d}} \notag\\
    &\qquad\qquad + C_1 L_1(\delta)\sqrt{1+t^{1/(4 + 2r^*)}}\> e^{-\bar{C}_0\frac{t}{d^{3 + 2r^*}}} 
    + C_1 L_1(\delta) e^{-C_0 \frac{t}{d^{2(1+r^*)}\log d}},
\end{align}
for $t \ge d^{4 + 2r^*}$ and constants $C_0, C_1 > 0$ not depending on $d, r^*$ or $B$. The second line in \eqref{eqn:main_fromx_result1} follows from \eqref{eqn:main_fromx5} by taking $\beta = \sqrt{\alpha}$ and $\delta = \alpha^{1/4}$ and by keeping only leading-order terms in \eqref{eqn:main_fromx5}.

\eqref{eqn:main_fromx_result2} follows in identical fashion, using \eqref{eqn:convergence_meanu_result_bounded_row} instead of \eqref{eqn:convergence_meanu_result_main} and \eqref{eqn:main_result_bounded_row} instead of \eqref{eqn:main_result}. We therefore omit the proof.
\end{proof}

\section{Proofs: Perturbations from stationarity for the Symmetric Atlas Model}\label{sec:proofsam}
Theorem \ref{thm:derivative_process} is a simple specialization of \cite[Theorem 1.2]{andres2009diffusion}. We provide a proof nonetheless to fix notation for the special case considered here, in particular for the fact that \cite[Theorem 1.2]{andres2009diffusion} considers possibly state-dependent drift, whereas solutions to \eqref{eqn:rbm} have constant drift coefficients. To assist a reader in relating the specialized Theorem \ref{thm:derivative_process} to the reference, we make the following comparisons between the notation of \eqref{eqn:rbm} and that of the referenced theorem: The drift coefficient marked $b(\cdot)$ in the reference is the constant $\mu$ here, the domain $G$ is $\Reals_+^d$, the directions of reflection $v_i, i = 1 \ldots d$ in the reference are the columns of the reflection matrix $R$, and $w$ of the reference is the Brownian motion $DB$.
\begin{proof}[Proof of Theorem \ref{thm:derivative_process}]
The almost sure existence and representation \eqref{andrep} of the derivative is a consequence of \cite[Theorem 1.2]{andres2009diffusion}, as we now show. The cited theorem proves the almost sure existence of the derivative process up to the first time $X$ hits a corner (intersection of two or more faces) of the orthant $\mathbb{R}^d_+$. Since the Atlas model does not hit corners by \cite[Theorem 1.9]{sarantsev2015triple}, the derivative $\eta^{i_0}(x, t)$ exists almost surely for any $t \in [0, \infty)$.

For $1 \le i \le d$, the vector $v_i$ of \cite[Theorem 1.2]{andres2009diffusion} is the $i$th column of $R$ here, denoted $R^{(i)}$, and the $i$th inward normal $n_i$ of $\Reals_+^d$ is the standard basis vector $e_i$. Terms $\frac{\partial }{\partial x_j}b(X(x, t))$ of \cite[Theorem 1.2]{andres2009diffusion} are all zero here, since the drift $b(X(x, t)) = \mu t$ does not depend on $x$. For $1 \le i \le d$, define vectors $\left(R^{(i)}\right)^\perp$ and $e_{i}^\perp$, orthogonal to $R^{(i)}$ and $e_{i}$ respectively, by equation (1.1) of \cite{andres2009diffusion} such that these vectors lie in $\operatorname{span}\{R^{(i)},e_{i}\}$. For $d \ge 3$, extend $e_{i},e_{i}^\perp$ by the vectors $\{n^j_i\}_{3 \le j \le d}$ to an orthonormal basis of $\mathbb{R}^d_+$.

From \cite[Theorem 1.2]{andres2009diffusion}, writing $S_{k}^{i_0}(x) = \eta^{i_0}(x, \tau_k)$ for $k \ge 0$,
\begin{equation}\label{eqn:derivative_process_1}
    S^{i_0}_{k+1} = \left\langle S^{i_0}_k(x),  \left(R^{(i_{k+1})}\right)^\perp \right\rangle e_{i_{k+1}}^\perp + \sum_{j=3}^d\left\langle S^{i_0}_k(x), n_{i_{k+1}}^j \right\rangle n_{i_{k+1}}^j,
\end{equation}
and $\eta^{i_0}(x, t)$ is constant on $t \in [\tau_k, \tau_{k+1})$. Moreover,
\begin{equation}\label{eqn:derivative_process_2}
    S^{i_0}_{k} = \left\langle S^{i_0}_k(x), e_{i_{k+1}} \right\rangle e_{i_{k+1}} + \left\langle S^{i_0}_k(x), e_{i_{k+1}}^\perp \right\rangle e_{i_{k+1}}^\perp + \sum_{j=3}^d\left\langle S^{i_0}_k(x), n_{i_{k+1}}^j \right\rangle n_{i_{k+1}}^j.
\end{equation}
In the above representations, the sum $\sum_{j=3}^d$ is taken to be zero if $d =1,2$. From \eqref{eqn:derivative_process_1}, \eqref{eqn:derivative_process_2} and \cite[Lemma 1.7]{andres2009diffusion}, 
\begin{align}\label{eqn:derivative_process_3}
S^{i_0}_{k+1} - S^{i_0}_{k} &= \left\langle S^{i_0}_k(x),  \left(R^{(i_{k+1})}\right)^\perp \right\rangle e_{i_{k+1}}^\perp -\left\langle S^{i_0}_k(x), e_{i_{k+1}} \right\rangle e_{i_{k+1}} - \left\langle S^{i_0}_k(x), e_{i_{k+1}}^\perp \right\rangle e_{i_{k+1}}^\perp \nonumber \\
&= -\left\langle S^{i_0}_k(x), e_{i_{k+1}} \right\rangle R^{(i_{k+1})}
\end{align} 
which proves \eqref{andrep}.

It remains only to prove the random walk representation \eqref{eqn:rwre_deriv}. Define the $\mathbb{R}^{d+2}_+$-valued functions $u(\cdot)$ and $v(\cdot)$ as follows: $v_j(t) := \mathbb{P}_{\Theta(x), i_0}(W(t) = j), \ j \in \{0,\dots,d+1\}$. Set $u_j(t) := \eta^{i_0}_j(x, t)$ for $j = 1, \ldots, d$ and define $u_0(\cdot), u_{d+1}(\cdot)$ iteratively by $u_0(\tau_{k+1}) = u_0(\tau_k) + \frac{1}{2}u_1(\tau_k) \ind{i_{k+1} = 1}$, $u_{d+1}(\tau_{k+1}) = u_{d+1}(\tau_{k}) + \frac{1}{2}u_d(\tau_{k}) \ind{i_{k+1} = d}$, with $u_0(\cdot)$ and $u_{d+1}(\cdot)$ constant on $t \in [\tau_k, \tau_{k+1})$ for $k \ge 0$. 

Using \eqref{andrep} for $u(\cdot)$ and the defining properties of $RW(\Theta(x), i_0)$ for $v(\cdot)$, note that for any $k \ge 0$, $u(t) = u(\tau_k)$ and $v(t) = v(\tau_k)$ for all $t \in [\tau_k, \tau_{k+1})$. Hence, we only need to show that $u(\tau_k) = v(\tau_k), k \ge 0$. This follows from the fact that both $\{u(\tau_k)\}_{k \ge 0}$ and $\{v(\tau_k)\}_{k \ge 0}$ are solutions to the recursive equation in $\{w(k)\}_{k \ge 0}$: $w(0) = e_{i_0}$ and for $k \ge 0$, with the fixed integer sequence $\{i_k\}_{k\ge 0}$,
\begin{align}\label{eqn:derivative_process_4}
w_j(k+1) &= \left(w_j(k) + \frac{1}{2}w_{j-1}(k)\right)\ind{i_{k+1} = j-1} \nonumber \\
&\qquad  + \left(w_j(k) + \frac{1}{2}w_{j+1}(k)\right)\ind{i_{k+1} = j+1} + w_j(k)\ind{i_{k+1} \neq j, j \pm 1},
\end{align}
for $1 \le j \le d,$ and
\begin{align}\label{eqn:derivative_process_5}
w_0(k+1) &= w_0(k) + \frac{1}{2} w_1(k)\ind{i_{k+1} = 1}, \nonumber \\
w_{d+1}(k+1) &= w_{d+1}(k) + \frac{1}{2} w_d(k)\ind{i_{k+1} = d}.
\end{align}
Note an inductive argument implies $\sum_{j=0}^{d+1} w_j(k) = 1$ for all $k \ge 0$ for any solution to \eqref{eqn:derivative_process_4}, \eqref{eqn:derivative_process_5} with $w(0) = e_{i_0}$.

\eqref{eqn:derivative_process_4}, \eqref{eqn:derivative_process_5} hold for $\{u(\tau_k)\}_{k \ge 0}$ by \eqref{andrep} and for $\{v(\tau_k)\}_{k \ge 0}$ by the definition of $RW(\Theta(x), i_0)$. Since $ u(0) - v(0) = 0$, the sequence $\{i_k\}_{k\ge 0}$ is common to $u$ and $v$, and \eqref{eqn:derivative_process_4}, \eqref{eqn:derivative_process_5} are linear recursive equations in $w(\cdot)$, we have $u(\tau_k) - v(\tau_k) = 0$ for all $k \ge 0$. This proves \eqref{eqn:rwre_deriv}.
\end{proof}
\begin{proof}[Proof of Theorem \ref{thm:atlas_perturbation}]
The proof consists of analyzing two regimes: $t < d^{16/3}$ and $t > t_0'' d^4 \log (2d)$. In the former regime, we show that the probability of any of the first $m(t)$ coordinates of $X$ not hitting zero sufficiently often is well-controlled by Lemma \ref{lemma:boundary_hits}, for appropriately chosen time-dependent integer $m(t)$. On the other hand, if each of the first $m(t)$ coordinates of $X$ makes a large number of visits to zero, then the random walk $W$ in the derivative representation of Theorem \ref{thm:derivative_process} makes a large number of jumps, and consequently, has a higher chance of getting absorbed in $0$ or $d+1$ by time $t$. In this case, we bound the right hand side of Corollary \ref{cor:$L^1$_dist} using the probability that a simple random walk does not hit $0$ within a certain number of steps. For $t > t_0 '' d^4 \log (2d)$, we use the approach of \cite{banerjeebudhiraja} via contractions in $L^1$ distance between the synchronously coupled RBMs.

Note that the Atlas model $X$ satisfies $b^{(d')} = -\rinvk{d'}\muk{d'} =  \left\{\rinvk{d'}_{i1}\right\}_{i=1}^{d'} > 0$ for every $d' \in \{1, \ldots, d \}$, and IV of Assumption \ref{assump:main} holds with $k_0 = 1$ and $\sigmalow = \sigmahigh = \sqrt{2}$. Therefore we may apply Lemma \ref{lemma:boundary_hits}, and in preparation we first calculate the quantities $\ak{d'}, \Tk{d'}, \Lambdak{d'}$ for $d' \in \{1, \ldots, d \}$.

Recalling that $\bk{d'}$ is the first column of $\rinvk{d'}$ and computing the row sums of $\rinvk{d'}$ from \eqref{eqn:atlas2} with $d'$ in place of $d$ gives 
\begin{equation}\label{eqn:atlas_perturbation1}
    \ak{d'} = \max_{1\le i \le d'} \frac{1}{\bk{d'}_i} \sum_{j=1}^{d'} (\rinvk{d'})_{ij}\sigma_j = \max_{1\le i \le d'} \frac{\sqrt{2}i(d' + 1 - i)}{2\left(1 - \frac{i}{d' + 1}\right)} = \frac{d'(d'+1)}{\sqrt{2}}. 
\end{equation}
Plugging this into the definitions of $\Tk{d'}, \Lambdak{d'}$ in \eqref{eqn:hitting_pars} and applying Lemma \ref{lemma:boundary_hits}, we obtain $A_0 \ge 1$ not depending on $d,d'$ such that for any $d' \in \{ 1,\ldots, d\}$, $A \ge A_0$ and $t \ge 4\left(1 + \frac{1}{2}(d'(d'+1))^2 \log (2d') \right)/\delta'$,
\begin{align}\label{eqn:atlas_perturbation2}
    \Prob{\N_{d'}(x, t) <   t \frac{\delta'}{4\left(1 + \frac{1}{2}(d'(d'+1))^2 \log (2d')\right)}} \nonumber\\
    \le \exp\left(-t\frac{\delta' C''}{1 + \frac{1}{2}(d'(d'+1))^2 \log (2d')} \right) \nonumber \\
    + \exp\left(-t  \frac{2C''}{A(d'(d'+1))^2}\right) \left\{1 + \exp \left(\frac{ \supnorm{x|_{d'}}}{A d'(d'+1)} \right) \right\},
\end{align}
where $\delta', C''>0$ and $A_0\ge 1$ do not depend on $d',d$.
We now consider $d' = m(t)$, where $m(t) \in \{ 1, \ldots, d\}$ will be a time-dependent integer to be determined later. Recall $\tau^*_0 := \inf\{s \ge 0 : W(s) = 0\}$. For any integer $n(t)$ such that $1 \le n(t) < m(t)$ for $t$ large enough that \eqref{eqn:atlas_perturbation2} holds (a time $t_0$ to be determined below) and with $N(t) = t \frac{\delta'}{4\left(1 + \frac{1}{2}(d'(d'+1))^2 \log (2d')\right)}$ we have for $i \in \{1, \ldots, n(t) \}$,
\begin{align}\label{eqn:atlas_perturbation3}
   & \Expect{\mathbb{P}_{\Theta(x),i}\left(\tau^*_0 > t, \>\max_{0 \le s \le t} W(s) < m(t) \right) }\notag\\
    &\le \Expect{\mathbb{P}_{\Theta(x), i} \left(\tau^*_0 > t, \>\max_{0 \le s \le t} W(s) < m(t)\right) \ind{\N_{m(t)}(x, t) \ge N(t)}}
    + \Prob{\N_{m(t)}(x, t) < N(t)}\notag\\
    &\le 12 \frac{n(t)}{\sqrt{N(t)}} + \Prob{\N_{m(t)}(x, t) < N(t)} \notag\\
    &\le 12 \frac{n(t)}{\sqrt{N(t)}} + \exp\left(- 4C''N(t) \right) \notag\\
   & + \exp\left(-8 C'' N(t) \frac{1 + \frac{1}{2}(m(t)(m(t)+1))^2 \log (2m(t))}{\delta'A(m(t)(m(t)+1))^2}\right) \left\{1 + \exp \left(\frac{ \supnorm{x|_{m(t)}}}{A m(t)(m(t)+1)} \right) \right\}\notag\\
    &\le 12 \frac{n(t)}{\sqrt{N(t)}} + \exp\left(-4C''N(t) \right) 
    + \exp\left(-\frac{4 C''}{\delta' A} N(t) \right) \left\{1 + \exp \left(\frac{ \supnorm{x|_{m(t)}}}{A m(t)(m(t)+1)} \right) \right\}.
\end{align}
The second inequality above follows from \eqref{eqn:rwre_jumptimes1} with $m = m(t)$ and a standard bound on the probability that a simple random walk started from $i \in \{1, \ldots, n(t) \}$ has not hit $0$ after $N(t)$ steps (e.g. \cite{mcmt} Theorem 2.17). The third inequality applies \eqref{eqn:atlas_perturbation2} with $d' = m(t)$ and $t = N(t)(\delta')^{-1}4\left(1 +\frac{1}{2} (m(t)(m(t)+1))^2 \log (2m(t))\right)$.

Now for $i \in \{1, \ldots, n(t) \}$ such that $W(0) = i$, the event $\{\tau^*_0 > t, \> \max_{0 \le s \le t} W(s) \ge m(t)\}$ implies the walk $W$ has taken at least $m(t) - n(t)$ steps without hitting $0$ or $d+1$, where it is absorbed. Thus for all $i \in \{1, \ldots, n(t) \}$, 
\begin{equation}\label{eqn:atlas_perturbation4}
    \mathbb{P}_{\Theta(x), i}\left(\tau^*_0 > t, \> \max_{0 \le s \le t} W(s) \ge m(t)\right)
    \le 12 \frac{n(t)}{\sqrt{m(t) - n(t)}}.
\end{equation}
We now set $m(t)$ so that the bounds in \eqref{eqn:atlas_perturbation3}, \eqref{eqn:atlas_perturbation4} are of the same order. Fix $\epsilon \in (0, 1/4)$ to be chosen later. Set $m(t) = d \wedge \floor{t^{ 1/4 - \epsilon}}$. There exists a $t_0(\epsilon) > 0$ not depending on $d$ such that \begin{equation}\label{eqn:atlas_perturbation5}
    N(t) = \frac{t \delta'}{1 + \frac{1}{2}\left(m(t)(m(t) + 1)\right)^2 \log (2m(t))}
    \ge \frac{t \delta'}{t^{1-4\epsilon} \log (2t^{1/4 - \epsilon})}\\
    \ge t^{3\epsilon},
\end{equation}
for $t \ge t_0(\epsilon)$. From this, we conclude that if $t$ is chosen such that $d \ge \floor{t^{ 1/4 - \epsilon}} \ge \floor{t_0(\epsilon)^{ 1/4 - \epsilon}}$ and $n(t) \le m(t)/2$, the dominating term in \eqref{eqn:atlas_perturbation3} is of order $n(t) t^{-\frac{3}{2} \epsilon}$ and the dominating term in \eqref{eqn:atlas_perturbation4} is of order $n(t) t^{-\frac{1}{8} + \frac{\epsilon}{2}}$. 

Setting $\epsilon = \frac{1}{16}$ matches these orders, at $n(t) t^{-\frac{3}{32}}$. Therefore we set $t_0 = t_0(1/16)$ and define \begin{equation}\label{eqn:atlas_perturbation6}
     m(t) = d \wedge \floor{t^{ 1/4 - \epsilon}} = d \wedge \floor{t^{3/16}}.
\end{equation}
We are now ready to prove \eqref{eqn:atlas_perturbation_result1}. Choose and fix any $n(\cdot)$ as in the statement of the theorem, and recall the definition of $t_0^{(n)}$ given there. We have by \eqref{eqn:atlas_perturbation3}, \eqref{eqn:atlas_perturbation4}, \eqref{eqn:atlas_perturbation5} for any $1 \le i \le n(t)$ and $t \ge t_0^{(n)}$, which implies $2n(t) \vee \floor{t_0^{3/16}} \le m(t) \le d$,
\begin{align}\label{eqn:atlas_perturbation7}
    \Expect{\mathbb{P}_{\Theta(x),i}\left(\tau^*_0 > t\right) }
     &= \Expect{\mathbb{P}_{\Theta(x),i}\left(\tau^*_0 > t, \> \max_{0 \le s \le t} W(s) \ge m(t) \right) }\notag\\
     &+ \Expect{\mathbb{P}_{\Theta(x),i}\left(\tau^*_0 > t, \> \max_{0 \le s \le t} W(s) < m(t) \right)}\notag\\
    & \le 12 \frac{n(t)}{\sqrt{m(t) - n(t)}}
     + 12 \frac{n(t)}{\sqrt{N(t)}} + \exp\left(-4C''N(t) \right)\notag\\
    &\qquad + \exp\left(-\frac{4 C''}{\delta' A} N(t) \right) \left\{1 + \exp \left(\frac{ \supnorm{x|_{m(t)}}}{A m(t)(m(t)+1)} \right) \right\}\notag\\
    &\le 12(1+\sqrt{2})\frac{n(t)}{\sqrt{m(t)}} 
    + \exp\left(-4C''N(t) \right) \nonumber \\
    &+ \exp\left(-\frac{4 C''}{\delta' A} N(t) \right) \left\{1 + \exp \left(\frac{ \supnorm{x|_{m(t)}}}{A m(t)(m(t)+1)} \right) \right\}.
\end{align}
This holds for any $A \ge A_0$ given in \eqref{eqn:atlas_perturbation2}. In the final inequality we used $m(t) \ge 2n(t)$ implies $\sqrt{m(t) - n(t)} \ge 2^{-1/2}\sqrt{m(t)}$, and  $N(t) \ge t^{3/16} \ge m(t)$ by \eqref{eqn:atlas_perturbation6} with the chosen $\epsilon = 1/16$. 

For $x, \tilde{x} \in \Reals_+^d$ with $x > 0$ and $t \ge 0$, by Corollary \ref{cor:$L^1$_dist}, with $\gamma(u) = x + u(\tilde{x} - x), u \in [0,1]$,
\begin{align}\label{eqn:atlas_perturbation8}
    \|X(\tilde{x}, t) - X(x, t)\|_1 
    \le \sum_{i = 1}^{n(t)} |(\tilde{x} - x)_i| \int_{[0, 1)}\mathbb{P}_{\Theta(\gamma(u)), i}(\tau^*_0>t) \> du + \sum_{i=n(t)+1}^d |(\tilde{x} - x)_i|.
\end{align}
Applying \eqref{eqn:atlas_perturbation7} to \eqref{eqn:atlas_perturbation8} and using $N(t) \ge m(t)$, we have for $2n(t) \vee \floor{t_0^{3/16}} \le m(t) \le d$,
\begin{align}\label{eqn:atlas_perturbation9}
    &\Expect{\|X(\tilde{x}, t) - X(x, t)\|_1}\notag\\
    &\le \left[12(1+\sqrt{2}) \|x - \tilde{x} \|_1 \right] \> \frac{n(t)}{\sqrt{m(t)}}
     + \|x - \tilde{x} \|_1 \> \exp\left(-4C''m(t) \right) \notag\\
   &+ \left[\|x - \tilde{x} \|_1\int_{[0, 1)}\left\{1 + \exp \left(\frac{ \supnorm{\gamma(u)|_{m(t)}}}{A m(t)(m(t)+1)} \right) \right\} \> du \right] \> \exp\left(-\frac{4 C''}{\delta' A} m(t) \right) \nonumber \\
   &+ \sum_{i=n(t)+1}^d |(\tilde{x} - x)_i|.
\end{align}
Fix any $Y \in \mathcal{P}(P_1,P_2,\delta)$. Recall $X^Y(\infty) := \left(X(\infty) + Y|_d\right)_+$ and $\alpha^Y(\cdot)$ from \eqref{perdec}. Using \eqref{eqn:atlas_perturbation9} conditioned on $x = X(\infty), \tilde{x} = X^Y(\infty)$, then taking expectations, and using the fact $\|X(\infty)-X^Y(\infty)\|_1 \le \sum_1^d|Y_i| \le \sum_1^\infty|Y_i| = \|Y\|_1$, we have for $2n(t) \vee \floor{t_0^{3/16}} \le m(t) \le d$,
\begin{align}\label{eqn:atlas_perturbation10}
   &\Expect{\|X(X^Y(\infty), t) - X(X(\infty), t) \|_1} - \alpha^Y(n(t)) \notag\\
    &\le \left[12(1+\sqrt{2})\Expect{\|Y\|_1}\right] \> \frac{n(t)}{\sqrt{m(t)}}
    + \Expect{\|Y\|_1} \> \exp\left(-4C''m(t) \right)\notag\\
     &\qquad + \Expect{
    \|Y\|_1\int_{[0, 1)}\left\{1 + \exp \left(\frac{ \supnorm{\gamma(u)|_{m(t)}}}{A m(t)(m(t)+1)} \right) \right\} \> du } \> \exp\left(-\frac{4 C''}{\delta' A} m(t) \right)\notag\\
   & \le \left[12(1+\sqrt{2})\Expect{\|Y\|_1}\right] \> \frac{n(t)}{\sqrt{m(t)}}
    + \Expect{\|Y\|_1} \> \exp\left(-4C''m(t) \right)\notag\\
    &\qquad + \sqrt{\Expect{\|Y\|_1^2}}\left[
    1 + \sqrt{\Expect{\exp \left(\frac{2 \supnorm{Y|_{m(t)}(\infty)}}{A m(t)(m(t)+1)} \right)\exp \left(\frac{2 \supnorm{X|_{m(t)}(\infty)}}{A m(t)(m(t)+1)} \right)}}\> \right]\notag\\
    &\hspace{10cm} \times \exp\left(-\frac{4 C''}{\delta' A} m(t) \right)\notag\\
    & \le \left[12(1+\sqrt{2})\Expect{\|Y\|_1}\right] \> \frac{n(t)}{\sqrt{m(t)}}
    + \Expect{\|Y\|_1} \> \exp\left(-4C''m(t) \right)\notag\\
    &+ \sqrt{\Expect{\|Y\|_1^2}}\left[
    1 + \left(\Expect{\exp \left(\frac{4 \supnorm{Y|_{m(t)}(\infty)}}{A m(t)(m(t)+1)} \right)}\Expect{\exp \left(\frac{4 \supnorm{X|_{m(t)}(\infty)}}{A m(t)(m(t)+1)} \right)}\right)^{1/4}\> \right]\notag\\
    &\hspace{10cm} \times \exp\left(-\frac{4 C''}{\delta' A} m(t) \right).
\end{align}
In the second inequality, we used the Cauchy-Schwarz inequality and the observation that for any $m \in \{1,\dots,d\}$, $\supnorm{\gamma(u)|_{m}} = \max_{1 \le i \le m}\left| X_i(\infty) + u(X^Y_i(\infty) - X_i(\infty))\right|  \le \supnorm{Y|_m(\infty)} + \supnorm{X|_m(\infty)}$ for $u \in [0, 1]$.

As $Y \in \mathcal{P}(P_1,P_2,\delta)$, taking $A = 4\max\{A_0, 4\delta^{-1}\}$, where $A_0$ is given in \eqref{eqn:atlas_perturbation2},
\begin{align}\label{eqn:atlas_perturbation11}
\Expect{\|Y\|_1} \le \sqrt{\Expect{\|Y\|_1^2}} \le \sqrt{P_1}, \nonumber \\ 
\Expect{\exp \left(\frac{4 \supnorm{Y|_{m(t)}(\infty)}}{A m(t)(m(t)+1)} \right)} \le P_2.
\end{align}
Moreover, for the same choice of $A$, we obtain along the same lines as \eqref{eqn:stat_moments6} using the explicit product form distribution of $X|_{m(t)}(\infty)$ (see \eqref{eqn:atlas_stationary}),
\begin{align}\label{eqn:atlas_perturbation12}
    \Expect{\exp \left(\frac{4 \supnorm{X|_{m(t)}(\infty)}}{A m(t)(m(t)+1)} \right)} 
    &\le  \Expect{\exp \left(\frac{ \supnorm{X|_{m(t)}(\infty)}}{A_0 m(t)(m(t)+1)} \right)} \nonumber \\
    &\le 1 + \frac{m(t)}{A_0m(t)(m(t)+1) - 1} \le 2.
\end{align}
Note that we cannot refer to Lemma \ref{lemma:stat_moments} here since Assumption \ref{assump:main} does not hold for the Atlas model. Using the above estimates in \eqref{eqn:atlas_perturbation10}, we obtain for $2n(t) \vee \floor{t_0^{3/16}} \le m(t) \le d$,
\begin{align}\label{eqn:atlas_perturbation13}
&\Expect{\|X(X^Y(\infty), t) - X(X(\infty), t) \|_1} \le \sqrt{P_1}\left[12(1+\sqrt{2}) \>\frac{n(t)}{\sqrt{m(t)}} + \exp\left(-4C''m(t) \right)\right] \nonumber\\
& + \sqrt{P_1}\left(1+(2P_2)^{1/4}\right)\exp\left(-\frac{C''}{\delta' \max\{A_0, 4\delta^{-1}\}} m(t) \right) + \alpha^Y(n(t)).
\end{align}
This proves the first bound in \eqref{eqn:atlas_perturbation_result1} upon noting that $\frac{C''}{\delta' \max\{A_0, 4\delta^{-1}\}} \ge \frac{C''}{\delta'A_0}\frac{\delta}{\delta + 4}$, and for $t^{(n)}_0 \le t < d^{16/3}$ (with $t^{(n)}_0$ as defined in the theorem statement), $2n(t) \vee \floor{t_0^{3/16}} \le m(t) \le d$.

We now address the case when $t$ is large relative to $d$ by applying results from \cite{banerjeebudhiraja}. Using equation (44) of that reference, plugging in the Standard Atlas model parameter estimates calculated in equation \eqref{eqn:atlas_perturbation1} here (with $d = d'$) and, in the reference, equation (12) and parameters given prior to Theorem 1, we have for any $x, \tilde{x} \in \mathbb{R}^d_+$ with $x > 0$,
\begin{multline}\label{eqn:atlas_perturbation14}
    \Expect{\|X(x, t) - X(\tilde{x}, t) \|_1}
    \le \Expect{\|X(x, t) - X(0, t) \|_1} + \Expect{\|X(\tilde{x}, t) - X(0, t) \|_1} \\
   \le C_1\left(\|x\|_1 \exp\left(\frac{C_0'\supnorm{x}}{A'd^4} \right) + \|\tilde{x} \|_1\exp\left(\frac{\supnorm{C_0'\tilde{x}}}{A'd^4} \right) \right) \exp \left(-\frac{C_0}{A'} \frac{t}{d^6\log (2d)} \right),
\end{multline}
for all $t \ge t_0''d^4 \log(2d), \ A' \ge A_0'$, where $C_0, C_0', C_1, t_0'', A_0' \in (0, \infty)$ are dimension-independent constants. Applying \eqref{eqn:atlas_perturbation14} conditional on $x = X(\infty) > 0$ and $\tilde{x} = X^{Y}(\infty) \ge 0$ and taking expectations we have
\begin{multline}\label{eqn:atlas_perturbation15}
    \Expect{\|X(X^Y(\infty), t) - X(X(\infty), t) \|_1} \\
    \le C_1\left(\Expect{\|X(\infty)\|_1 \exp\left(\frac{C_0'\supnorm{X(\infty)}}{A'd^4} \right)} + \Expect{\|X^Y(\infty) \|_1\exp\left(\frac{\supnorm{C_0'X^Y(\infty)}}{A'd^4} \right)} \right)\\
    \times \exp \left(-\frac{C_0}{A'} \frac{t}{d^6\log (2d)} \right)
\end{multline}
for all $t \ge t_0''d^4 \log(2d), \ A' \ge A_0'$. From the explicit distribution of $X(\infty)$ in \eqref{eqn:atlas_stationary}, for any $A' \ge \max\{A_0',4C_0'\}$,
\begin{align}\label{eqn:atlas_perturbation16}
    \Expect{\|X(\infty)\|_1 \exp\left(\frac{C_0'\supnorm{X(\infty)}}{A'd^4} \right)} 
    &\le \sqrt{\Expect{\|X(\infty)\|_1^2}}\sqrt{\Expect{\exp\left(\frac{2C_0'\supnorm{X(\infty)}}{A'd^4} \right)}} 
    \le 2d,
\end{align}
Moreover, as $Y \in \mathcal{P}(P_1,P_2,\delta)$, using $\|X^Y(\infty)\|_1 \le \|X(\infty)\|_1 + \|Y\|_1$ and $\supnorm{X^Y(\infty)} \le \supnorm{X(\infty)} + \supnorm{Y|_d}$, we obtain for any $A' \ge \max\{A_0', 2C_0'\delta^{-1},4C_0'\}$,
\begin{align}\label{eqn:atlas_perturbation17}
\Expect{\|X^Y(\infty)\|_1 \exp\left(\frac{C_0'\supnorm{X^Y(\infty)}}{A'd^4} \right)} \nonumber \\
\le \sqrt{\Expect{\|X^Y(\infty)\|_1^2}}\sqrt{\Expect{\exp\left(\frac{2C_0'\supnorm{X^Y(\infty)}}{A'd^4} \right)}} \nonumber \\
\le \sqrt{2\Expect{\|X(\infty)\|_1^2} + 2\Expect{\|Y\|_1^2}}\sqrt{\Expect{\exp\left(\frac{2C_0'\supnorm{X(\infty)}}{A'd^4} \right)}\Expect{\exp\left(\frac{2C_0'\supnorm{Y|_d}}{A'd^4} \right)}} \nonumber \\
\le \sqrt{4d^2 + 2P_1}\sqrt{2P_2}.
\end{align}
Using \eqref{eqn:atlas_perturbation16} and \eqref{eqn:atlas_perturbation17} in \eqref{eqn:atlas_perturbation15}, fixing $A' = \max\{A_0', 2C_0'\delta^{-1},4C_0'\}$, we obtain
\begin{equation}\label{eqn:atlas_perturbation18}
    \Expect{\|X(X^Y(\infty), t) - X(X(\infty), t) \|_1} \le 2C_1 \sqrt{4d^2 + 2P_1}\sqrt{2P_2}\exp \left(-\frac{C_0}{A'} \frac{t}{d^6\log (2d)} \right),
\end{equation}
for $t \ge t_0''d^4 \log(2d),$ which proves the second bound in \eqref{eqn:atlas_perturbation_result1}, and completes the proof of the theorem.
\end{proof}

%
%

\section*{Acknowledgements}
SB was supported in part by the NSF CAREER award DMS-2141621.

The authors acknowledge Soumik Pal for suggesting a version of the perturbation problem for the Symmetric Atlas model that initiated this work. They also thank Amarjit Budhiraja and Andrey Sarantsev for numerous insightful discussions.

The authors also thank two anonymous referees and an associate editor for their careful reading and valuable feedback that greatly improved the readability of the article.
 


\bibliographystyle{imsart-number} 
\bibliography{refs.bib}       


\end{document}